\newenvironment{myeq}[1][]
{\stepcounter{thm}\begin{equation}\tag{\thethm}{#1}}
{\end{equation}}
\newcommand{\mydiagram}[2][]
{\stepcounter{thm}\begin{equation}
     \tag{\thethm}{#1}\vcenter{\xymatrix{#2}}\end{equation}}
\newcommand{\sect}{\setcounter{thm}{0}\section}
\newcommand{\secta}{\setcounter{thm}{0}\section*}
\numberwithin{equation}{section}
\theoremstyle{plain}
    \newtheorem{thm}{Theorem}[section]
    \newtheorem{rthm}[thm]{Resolution Theorem}
    \newtheorem{drthm}[thm]{Dual Resolution Theorem}
    \newtheorem{prop}[thm]{Proposition}
    \newtheorem{lemma}[thm]{Lemma}
    \newtheorem{hlemma}[thm]{Hauptlemma}
    \newtheorem{tlemma}[thm]{Transport Lemma}
    \newtheorem{subsec}[thm]{}
    \newtheorem*{thma}{Theorem A}
    \newtheorem*{thmb}{Theorem B}
\theoremstyle{definition}
    \newtheorem{defn}[thm]{Definition}
    \newtheorem{example}[thm]{Example}
    \newtheorem{notn}[thm]{Notation}
    \newtheorem{conv}[thm]{Convention}
\theoremstyle{remark}
        \newtheorem{remark}[thm]{Remark}
    \newtheorem{ack}[thm]{Acknowledgements}
\newcommand{\lra}{\longrightarrow}
\newcommand{\lla}{\longleftarrow}
\newcommand{\xra}[1]{\xrightarrow{#1}}
\newcommand{\hra}{\hookrightarrow}
\newcommand{\epic}{\to\hspace{-5 mm}\to}
\newcommand{\hsp}{\hspace{10 mm}}
\newcommand{\hs}{\hspace*{5 mm}}
\newcommand{\hsm}{\hspace*{3 mm}}
\newcommand{\vsm}{\vspace*{2 mm}}
\newcommand{\vsn}{\vspace{1 mm}}
\newcommand{\sotimes}{\overline{\otimes}}
\newcommand{\sbu}{\sb{\bullet}}
\newcommand{\ubu}{\sp{\bullet}}
\newcommand{\wA}{\widehat{A}}
\newcommand{\vare}{\varepsilon}
\newcommand{\omi}{0}
\newcommand{\ove}{\overline{e}}
\newcommand{\ovH}{\overline{H}}
\newcommand{\ovp}{\overline{p}}
\newcommand{\dA}{\mathcal{A}}
\newcommand\dB{\mathcal{B}}
\newcommand{\dC}{{\mathcal C}}
\newcommand{\K}{{\mathcal K}}
\newcommand{\dL}{\mathcal{U}}
\newcommand{\M}{{\mathcal M}}
\newcommand{\OO}{{\mathcal O}}
\newcommand{\SB}{\mathcal{S}\sb{\Box}}
\newcommand{\TT}{\mathbf{Top}}
\newcommand{\Ta}{\TT\sb{\ast}}
\newcommand{\Ws}{W\sb{\ast}}
\newcommand\dX{\mathcal{X}}
\newcommand\dY{\mathcal{Y}}
\newcommand\rH{{\rm H}}
\newcommand\real{{\mathbb R}}
\newcommand{\bZ}{{\mathbb Z}}
\newcommand{\bZO}{\bZ\sb{\otimes}}
\newcommand{\hZO}{\widehat{\bZ}\sb{\otimes}}
\newcommand{\bN}{{\mathbb N}}
\newcommand{\bF}{\mathbb F}
\newcommand{\Fp}{\bF\sb{p}}
\newcommand{\Aut}{\operatorname{Aut}}
\newcommand{\colim}{\operatorname{colim}}
\newcommand{\Hom}{\operatorname{Hom}}
\newcommand{\Map}{\operatorname{Map}}
\newcommand{\Mor}{\operatorname{Mor}}
\newcommand{\sk}[1]{\operatorname{sk}\sb{#1}}
\newcommand{\Obj}{\operatorname{Obj}\,}
\newcommand{\stt}{\star}
\newcommand{\fd}{f\sbu}
\newcommand{\gd}{g\sbu}
\newcommand{\op}{\sp{\operatorname{op}}}
\newcommand{\aC}{{\mathbf a}}
\newcommand{\haC}{\widehat{\aC}}
\newcommand{\cA}{{\mathbf A}}
\newcommand{\Ab}{{\mathbf{Ab}}}
\newcommand{\cC}{{\mathbf C}}
\newcommand{\gC}{{\mathbf g}}
\newcommand{\cG}{{\mathbf G}}
\newcommand{\Set}{{\mathbf{Set}}}
\newcommand{\Seta}{\Set\sb{\ast}}
\newcommand{\CS}{\Set\sp{\squa\op}}
\newcommand{\CSa}{\Seta\sp{\squa\op}}
\newcommand{\LCSa}{\Seta\sp{\LBox\op}}
\newcommand{\Sp}{{\mathbf{Spec}}}
\newcommand{\squa}{\pmb \square}
\newcommand{\LBox}{\overline{\squa}}
\newcommand{\cT}{{\mathbf T}}
\newcommand{\tria}{\pmb \triangle}
\newcommand{\nul}[1]{\operatorname{nul}\sb{#1}}
\newcommand{\nnC}{\nul{n}\cC}
\newcommand{\Nul}[1]{\operatorname{Nul}\sb{#1}}
\newcommand{\NnC}{\Nul{n}\cC}
\newcommand{\NlnC}{\Nul{\leq n}\cC}
\newcommand{\Tln}{\cT(\leq n)}
\DeclareMathOperator\Mon{Mon}
\DeclareMathOperator\Ext{Ext}
\newcommand{\II}[2]{I\sp{#1}\sb{#2}}
\newcommand{\wh}{~--~}
\newcommand{\wwh}{--~}
\newcommand{\w}[2][ ]{\ \ensuremath{#2}{#1}\ }
\newcommand{\ww}[1]{\ \ensuremath{#1}}
\newcommand{\www}[2][ ]{\ensuremath{#2}{#1}\ }
\newcommand{\wwb}[1]{\ \ensuremath{(#1)}-}
\newcommand{\wb}[2][ ]{\ (\ensuremath{#2}){#1}\ }
\newcommand{\wbb}[2][ ]{(\ensuremath{#2}){#1}\ }
\newcommand{\wref}[2][ ]{\ \eqref{#2}{#1}\ }
\newcommand{\id}{\operatorname{Id}}
\newcommand{\inc}{\operatorname{inc}}
\newcommand{\image}{\operatorname{Im}}
\newcommand{\Ker}{\operatorname{Ker}}
\begin{document}
%
%
\title{Higher order derived functors and the Adams spectral sequence}
\author[H.-J.~Baues]{Hans-Joachim Baues}
\address{Max-Planck-Institut f\"{u}r Mathematik\\
Vivatsgasse 7\\ 53111 Bonn, Germany}
\email{baues@mpim-bonn.mpg.de}
\author[D.~Blanc]{David Blanc}
\address{Department of Mathematics\\ University of Haifa\\ 3498838 Haifa, Israel}
\email{blanc@math.haifa.ac.il}
\date{\today}
\subjclass{Primary: 55T15; \ secondary: 18G40, 18G50, 55S20}
\keywords{Adams spectral sequence, higher chain complexes, higher cohomology
operations, higher \ww{\Ext}-groups, higher order resolutions, left cubical balls}

\begin{abstract}
Classical homological algebra studies chain complexes, resolutions, and derived
functors in additive categories. In this paper we define \emph{higher order}
chain complexes, resolutions, and derived functors in the context of a new
type of algebraic structure, called an \emph{algebra of left cubical balls}.
We show that higher order resolutions exist in these algebras, and that they
determine higher order \ww{\Ext}-groups. In particular, the $E\sb{m}$-term of
the Adams spectral sequence \wb{m> 2} is such a higher \ww{\Ext}-group,
providing a new way of constructing its differentials.

\end{abstract}
\maketitle

\setcounter{section}{0}

%
%
\section*{Introduction}
\label{cint}

Topologists have been working on the problem of calculating the homotopy
groups of spheres for around eighty years, and many methods have been developed
for this purpose. One of the most useful tools for this purpose is the Adams
spectral sequence \w[,]{E\sb{2},E\sb{3},E\sb{4},\dotsc} converging to the 
$p$-completed stable homotopy groups of the sphere. Adams computed the 
\ww{E\sb{2}}-term of the spectral sequence, and showed that it is 
algebraically determined:
\begin{myeq}
\label{eqtwoterm}
E\sb{2}\sp{s,t}~\cong~\Ext\sp{s,t}\sb{\dA}(\Fp,\Fp)~,
\end{myeq}
where the derived functor \w{\Ext} is taken for modules over the mod $p$
Steenrod algebra $\dA$ of primary mod $p$ cohomology operations (cf.\ \cite{AdSS}).

Elements in the mod $p$\ cohomology \w{H\sp{n}(X;\Fp)} of a space $X$ are given
by homo\-to\-py classes of maps from $X$ to an Eilenberg-Mac~Lane space
\w[,]{K(\Fp,n)}  and the elements of the Steenrod algebra are homotopy
classes of maps between such Eilenberg-Mac~Lane spaces, acting on cohomology
classes by composition.

This structure suffices to recover the \ww{E\sb{2}}-term of the Adams spectral
sequence, by \wref[.]{eqtwoterm} However, in order to determine the higher terms
\w[,]{E\sb{3},E\sb{4},\dotsc} we have to look not only at homotopy classes of maps,
but at the \emph{space} \w{\Map(X,K(\Fp,n))} of all maps from $X$ to
\w[,]{K(\Fp,n)}  together with the action of the mapping spaces
\w{\Map(K(\Fp,n),K(\Fp,k))} on it. This structure is called the
\emph{Eilenberg-Mac~Lane mapping algebra} of $X$ (cf.\ \cite{BBlaC}).

As we show in this paper, this mapping algebra suffices to compute the
whole Adams spectral sequence (cf.\ \cite{BBlaS}). However, this structure
is still too complicated for computational purposes, because it involves
the \emph{topology} of the mapping spaces, and is not algebraic in nature.
Therefore, our main aim here is to extract from these spaces the appropriate
algebraic data needed to calculate all the differentials.

For this purpose, we consider singular cubes \w{\sigma:I\sp{n}\to\Map(X,Y)} in the
above mapping spaces. Appropriate collections of such cubes can be glued together
to form a \emph{left cubical ball}, as in Figure \ref{fig1} (see \S \ref{clcb}
below). Such balls have a new kind of combinatorial-algebraic structure
(coming from pasting operations, and so on), which we call an
\emph{algebra of left $n$-cubical balls}. The case \w{n=1} is discussed in
detail in Section \ref{stcaone}, where we show that an algebra of left
$1$-cubical balls corresponds to the notion of an abelian track category.

%
%
\begin{figure}[htbp]
\begin{center}
\begin{picture}(200,180)(20,10)
%
%
\put(70,100){\line(1,0){100}}
\put(120,50){\line(0,1){100}}
\put(80,60){\line(1,1){80}}
\put(30,60){\line(1,0){50}}
\put(30,60){\line(1,1){40}}
\put(80,140){\line(1,-1){80}}
\put(30,140){\line(1,0){50}}
\put(30,140){\line(1,-1){40}}
\put(80,140){\line(0,1){50}}
\put(80,190){\line(1,-1){40}}
\put(120,150){\line(1,1){40}}
\put(160,190){\line(0,-1){50}}
\put(160,140){\line(1,0){50}}
\put(170,100){\line(1,1){40}}
\put(170,100){\line(1,-1){40}}
\put(160,60){\line(1,0){50}}
\put(160,60){\line(0,-1){50}}
\put(120,50){\line(1,-1){40}}
\put(80,10){\line(1,1){40}}
\put(80,10){\line(0,1){50}}
\put(120,100){\circle*{5}}
\put(130,92){{\scriptsize $0$}}
\end{picture}
\end{center}
\caption[fig1]{A left $2$-cubical ball}
\label{fig1}
\end{figure}
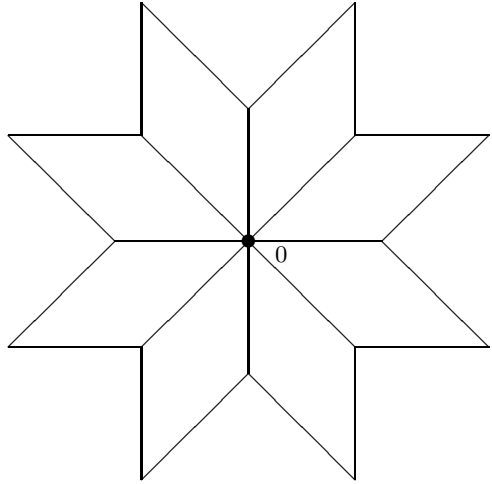

In such algebras, we define the notion of a higher order chain complex, replacing
the equation \w{\partial\sb{n-1}\circ\partial\sb{n}=0} in an ordinary chain 
complex by nullhomotopies \w[,]{H\sb{n}:\partial\sb{n-1}\circ\partial\sb{n}\sim 0} 
and higher nullhomotopies (see Section \ref{cccz}). This allows us to define 
higher order resolutions, in Section \ref{cehor}, and show:

%
%
\begin{thma}
Higher order resolutions exist in any algebra of left $n$-cubical balls.
\end{thma}
See Theorem \ref{thrt} below\vsm .

It turns out that such higher order resolutions provide a method for calculating 
the higher terms of the Adams spectral sequence, which thus may be
identified as certain $m$-th order derived functors, called $m$-th order
\ww{\Ext}-groups:

%
%
\begin{thmb}
Any $n$-th order resolution determines the higher order
\ww{\Ext}-groups \w{E\sb{m}} for \w[.]{m\leq n+2}
In the algebra of left cubical balls determined by the Eilenberg-Mac~Lane 
mapping algebra, these higher order \ww{\Ext}-groups compute the 
\ww{E\sb{m}}-terms of the Adams spectral sequence for \w[.]{m\leq n+2}
\end{thmb}
See Theorems \ref{twelldef} and \ref{temma} below.

\begin{remark}
As shown in \cite{BJSe}, at the secondary level (that is, computing the 
\ww{d\sb{2}}-differential), this algebra of cubical balls can be replaced 
by an ordinary differential graded algebra. In \cite{BFran}, the first author 
and Martin Frankland show that this can also be done at the tertiary level. 
It is conjectured in \cite{BauHO} that there is a single differential graded 
algebra, extracted from the algebra of cubical balls, from which the whole 
Adams spectral sequence can be computed.
\end{remark}

\begin{remark}
The associative composition between mapping spaces induces an associative 
operation $\otimes$ on singular cubes, because the product of cubes is again 
a cube.  This property does not hold for simplices, which is why we chose 
such singular cubes as our main technical tool here. In \cite{BFran}, too, 
the $\otimes$-product of cubes plays a central role. This is because the 
geometry of cubes is better suited to describing composition of (higher) 
homotopies than the geometry of simplices.

We observe that, in the same way, cubes (rather than simplices) are used 
in the literature to describe higher order homotopy operations, such as 
Toda brackets (see \cite{BJTurH}) and the higher homotopy commutativity 
of diagrams (see \cite{BVogHI}).
\end{remark}

\begin{remark}
Note that the Steenrod squares, represented by Steenrod's original
\ww{\cup\sb{i}}-operations at the chain level, generate the whole Steenrod algebra
$\dA$. In \cite{KristS,KristCF}, Kristensen used these \ww{\cup\sb{i}}-operations
to calculate secondary cohomology operations. Similarly, the algebra of left
cubical balls associated to the Eilenberg-Mac~Lane mapping algebra can be
used to extend Kristensen's approach to higher dimensions.
\end{remark}

\begin{remark}
This paper is framed in terms of the Eilenberg-Mac~Lane mapping algebra, since
the most important application of our methods is to the classical Adams
spectral sequence.  However, our methods may be extended to other mapping
algebras derived from other (topologically-enriched) Quillen model categories.
\end{remark}

\begin{conv}
\label{ctop}
The category of compactly generated Hausdorff spaces (cf.\ \cite{SteCC}, and
compare \cite{VogtCC}) is denoted by \w[,]{\TT} and that of pointed connected
compactly generated spaces by \w[.]{\Ta} In particular, we shall assume that
all (pointed) topological mapping spaces in this paper take value in
\w{(\TT,\times)}
(respectively, \w[).]{(\Ta,\wedge)} See also \cite[\S 1.1]{GWhitH}.
\end{conv}

\begin{ack}
The authors wish to thank the referee for his or her careful reading of the paper
and many pertinent remarks.
The first author would like to thank the Department of Mathematics at the
University of Haifa, and the second author would like to thank the Max Planck
Institut f\"{u}r Mathematik in Bonn, for their hospitality while this research
was being carried out. The second author was partly supported by Israel Science
Foundation Grant 247377/11.
\end{ack}

%
%
\sect{Left cubical sets}
\label{clcs}

A nullhomotopy is a homotopy between some map and the zero map between
pointed topological spaces. We wish to consider higher homotopies, which are
described by singular cubes in mapping spaces; in particular, we have higher
nullhomotopies, which lead us to the concept of \emph{left cubical sets},
defined in this section. The main examples we shall need are the left
$n$-cubical sets \w{\nul{n}(X)} and \w{\Nul{n}(X)} of a pointed space $X$.

We  first recall some basic properties of cubical sets:
let \w{I = [0, 1]} be the unit interval and let
\w{I\sp{n} = I\times\dots \times I}
be the $n$-dimensional cube. We have inclusions
\w{d\sb{\vare}\sp{i}:I\sp{n-1}=I\sp{i-1}\times\{\vare\}\times 
I\sp{n-i}\subset I\sp{n}}
for \w{1 \leq i \leq n} and \w[.]{\vare \in \{0, 1\}}  Here \w{I\sp{0}} is a
single point.

Let $\squa$ denote the category whose objects are cubes \w{I\sp{n}}
\wb[,]{n \geq 0} and whose morphisms are generated by \w{d\sb{\vare}\sp{i}} and
the projections \w[.]{s\sp{i}:I\sp{n}\to I\sp{n -1}}

A \emph{pointed cubical set} is a functor \w[,]{K:\squa\op \lra\Seta}
where \w{\Seta} is the category of pointed sets. As usual, \w{K(I\sp{n})} is
denoted \w{K\sb{n}} and \w{\ast\in K\sb{n}} is the base point. We write
\w{\dim(a) = n} if \w[.]{a \in K\sb{n}} See \cite{CisiP}, \cite{JardCH},
or \cite{IsaacS} for further details on the category of cubical sets.

\begin{defn}
\label{dlcs}
Let $\LBox$ be the subcategory of $\squa$ consisting of objects \w{I\sp{n}}
\wb{n \geq 0} and morphisms generated by \w[.]{d\sb{0}\sp{i}}
A \emph{left cubical set} is a functor \w[.]{\LBox\op\to\Seta} We write
\w{\partial\sp{i}} for \w{(d\sb{0}\sp{i})\sp{\ast}:K\sb{n}\to K\sb{n - 1}}
\wb[.]{1 \leq i \leq n} We also consider the full subcategories
\w{\LBox\sb{n}\subset \LBox} consisting of objects \w{I\sp{m}}
\wb[.]{0\leq m\leq n}
A functor \w{\LBox\sb{n}\op \lra\Seta} is called a \emph{left $n$-cubical set}.
\end{defn}

\begin{example}
\label{egnul}
Given a pointed cubical set $K$, one obtains a left cubical set \w{\nul{}(K)}
by setting
$$
\nul{}(K)\sb{m}~:=~\{a \in K\sb{m}~|\ \hsm(d\sb{1}\sp{i})\sp{\ast}
a = \ast\hsm \text{for}\hsm 1 \leq i \leq m\}.
$$
\noindent Accordingly, we define the left $n$-cubical set \w{\nul{n}(K)} to 
be the restriction of \w{\nul{}(K)} to \w[.]{\LBox\sb{n}\op}
\end{example}

\begin{remark}
\label{radjoint}
The following properties of (left) cubical sets are included here for future 
reference. Observe that \w{\nul{}} is a functor from pointed cubical sets
to left cubical sets. Its left adjoint \w{\dL:\LCSa\to\CSa} may be thought
of as a ``universal enveloping cubical set'' functor, described as follows:
given a left cubical set $M$, the pointed cubical set \w{\dL(M)} has one
$n$-cube \w{I\sp{n}\sb{a}} for each left $n$-cube \w[,]{a\in M} with
\w{(d\sb{1}\sp{i})\sp{\ast}I\sp{n}\sb{a}=\ast} (the base point) for each
\w[.]{1\leq i\leq n}

In addition, there is a degenerate \wwb{n+k}cube:
\begin{myeq}
\label{edegcube}
(s\sp{j\sb{1}})\sp{\ast}\dotsc(s\sp{j\sb{k}})\sp{\ast}I\sp{n}\sb{a} \hsm
\text{in}\hsm \dL(M)\hsm \text{for each iterated projection}\hsm
s\sp{j\sb{k}}\dotsc s\sp{j\sb{1}}:I\sp{n+k}\to I\sp{n}\text{~in~}\squa
\end{myeq}
\noindent (with identifications according to the cubical identities).

It is readily verified that \w{\dL(M)} is indeed a pointed cubical set,
with a natural isomorphism:
\begin{myeq}\label{eqadjoint}
\Hom\sb{\LCSa}(M,\nul{}(K))~\xra{\cong}~\Hom\sb{\CSa}(\dL(M),K)
\end{myeq}
\noindent for \w{K\in\CSa} and \w[.]{M\in\LCSa}
\noindent Moreover, both functors preserve dimensions of all cubes, so they
commute with the $n$-skeleton functor, yielding a left adjoint \w{\dL\sb{n}}
to \w[.]{\nul{n}}

For any cubical set $K$, let \w{\dC\sb{K}} be the partially ordered set of
all $k$-cubes \wb{k\geq 0} of $K$, ordered under inclusion. We have
\w[,]{K\cong\colim\sb{I\sp{k}\in\dC\sb{K}}\ I\sp{k}} where each \w{I\sp{k}} 
is thought of as a cubical set. We use this to define a monoidal structure on
\w[,]{\CS} given by:
\begin{myeq}\label{eqotimes}
K\otimes L~:=~\colim\sb{I\sp{j}\in\dC\sb{K},~I\sp{k}\in\dC\sb{L}}~I\sp{j+k}
\end{myeq}
\noindent (see  \cite[\S 3]{JardCH}). If $K$ and $L$ are pointed, there
is a cubical \emph{smash} functor
\begin{myeq}\label{eqsmash}
K\sotimes L~:=~(K\otimes L)~/~(\{\ast\}\otimes L \amalg K\otimes\{\ast\})
\end{myeq}
\noindent on \w[,]{\CSa} which is also defined on \w[.]{\LCSa} Moreover,
\w{\nul{}} and $\dL$ are monoidal with respect to $\sotimes$ on \w{\CSa}
and \w[,]{\LCSa} respectively.
\end{remark}

\begin{defn}
\label{dcsingular}
For a pointed space \w[,]{(X,\ast)} let \w{\SB X} be the \emph{singular}
pointed cubical set:  thus \w{(\SB X)\sb{n}} is the set of all maps
\w[,]{I\sp{n} \lra X} with base point \w[.]{o:I\sp{n} \lra \{\ast\} \subset X}

Thus, the set \w{\nul{}(X):=\nul{}(\SB X)} of all \emph{singular null cubes}
in $X$ consists of all maps \w{a: I\sp{n}\to X} \wb{n\geq 0} with
\w{a d\sb{1}\sp{i} = o} for \w[.]{1 \leq i \leq n} We let
\w[.]{\nul{n}(X):= \nul{n}(\SB X)}
\end{defn}

\begin{defn}
For any pointed topological space \w[,]{(X,\ast)} the
left $n$-cubical set \w{\Nul{n}(X)} is defined by
$$
\Nul{n}(X)\sb{m}:=
\begin{cases}\nul{}(X)\sb{m} & \text{for}\hsm m < n,\\ \nul{}(X)\sb{n}/\simeq &
\text{for}\hsm n = m.
\end{cases}
$$
\noindent Here we set \w{a \simeq b} for \w{a, b \in \nul{}(X)\sb{n}} if the maps
\w{a, b:I\sp{n} \lra X} are homotopic relative to the boundary
\w{\partial I\sp{n}}
of the cube \w[.]{I\sp{n}} Let \w{\{a\}} be the equivalence class of $a$;
we call \w{\{a\}} an \emph{$n$-track} in $X$.
\end{defn}

There is a surjective map of left $n$-cubical sets:
\begin{myeq}\label{eqm}
\nul{n}(X)~\epic~\Nul{n}(X)~,
\end{myeq}
which is the identity in dimension $< n$ and which carries $a$ with
\w{\dim(a) = n} to the $n$-track \w[.]{\{a\}} We point out that the
left $n$-cubical set \w{\Nul{n}(X)} is not the restriction of a cubical set.

\begin{remark}
Let $\overline{\tria}$ be the category with sets
\w{\{1, 2, \dotsc, n\}} \wb{n \geq 0} as objects, and order preserving
injective maps as morphisms. There is an isomorphism of categories
\w{\overline{\tria}\cong\LBox} which carries \w{\{1, 2, \dotsc, n\}}
to \w{I\sp{n}} and carries
$$
\{1, 2, \dotsc, n-1\}\cong\{1, \dotsc, \widehat{i}, \dotsc, n\} \subset
\{1, \dotsc, n\}
$$
\noindent to \w[.]{d\sb{0}\sp{i}} Here $\widehat{i}$ indicates that we omit $i$.
\end{remark}

%
%
\sect{$n$-graded categories enriched in left cubical sets}
\label{cncatlcs}

In this section we define the $\otimes$-composition of singular cubes in
a topologically enriched category, using the fact that the product of two cubes
is a cube.  It is important to note that (in a category enriched in
\w{(\Ta,\wedge)} \wwh see \S \ref{ctop}) the $\otimes$-composition of two
singular null cubes is a singular null cube.
The $\otimes$-composition respects the obvious grading by dimension,
yielding a graded category:

\begin{defn}\label{dgradec}
A \emph{graded set} is a sequence of sets \w[,]{K=(K\sb{n})\sb{n=0}\sp{\infty}} 
and a map of graded sets \w{f:K\to L} is a sequence of maps 
\w{f\sb{n}:K\sb{n}\to L\sb{n}}
\wb[.]{n \geq 0} We write \w{\dim(x) = n} if \w[.]{x \in K\sb{n}}
A \emph{graded category} \w{\cG} is a category in which each morphism $f$
has a dimension \w{\dim(f) \geq 0} such that the composition \w{f g} satisfies
$$
\dim(f g) = \dim(f) + \dim(g).
$$
\noindent Thus, all morphism sets \w{\Mor\sb{\cG}(X, Y)} are graded sets.

An \emph{$n$-set} $L$ is a finite sequence of sets \w[.]{(L\sb{0},\dotsc,L\sb{n})}
For example, the \emph{$n$-skeleton} \w{(K\sb{0}, \dotsc, K\sb{n})} of a graded set
is an $n$-set. An \emph{$n$-graded category} consists of morphism sets which \
are $n$-sets and composition \w{f g} is defined if \w[.]{\dim(f)+\dim(g)\leq n}
For example, the $n$-skeleton of a graded category is an $n$-graded category.

An \emph{$n$-graded category enriched in left cubical sets} is an $n$-graded
category such that morphism sets \w{\Mor(X, Y)} are left $n$-cubical sets
with operators \w{(d\sb{0}\sp{i})\sp{\ast} = \partial\sp{i}} satisfying
\begin{myeq}\label{eqfacemaps}
\partial\sp{i} (f g) = \begin{cases}
(\partial\sp{i} f) g        & \hsm \text{for}\hsm i \leq \dim(f)\\
f(\partial\sp{i-\dim(f)} g) & \hsm \text{for}\hsm i > \dim(f)
\end{cases}
\end{myeq}
\noindent Moreover, the zero maps \w{o\sp{n} \in \Mor(X, Y)\sb{n}} 
\wb{n \geq 0} satisfy
$$
o\sp{n} g = o\sp{n + \dim(g)} \hsm \text{and}\hsm f o\sp{m} = o\sp{\dim(f) + m}.
$$
\end{defn}

\begin{example}
\label{egnull}
Let $\cC$ be a category enriched in \w{(\Ta,\wedge)} where $\wedge$ is the
smash product of pointed topological spaces (cf.\ \S \ref{ctop}). Thus for
every \w[,]{X,Y\in\Obj(\cC)}  there is a zero map
\w[,]{o\in\Mor\sb{\cC}(X, Y)} satisfying \w{o g = o} and \w{f o = o} for any
\w[.]{f, g \in \Mor\sb{\cC}}

The $\otimes$-composition of two singular (null) cubes \w{f \otimes g} is
defined to be the composite
$$
\xymatrix@1{f \otimes g:I\sp{n} \times I\sp{m}\ar[r]^(.38){f \times g} &
\Mor\sb{\cC}(Y, X) \times \Mor\sb{\cC}(Z, Y)\ar[r]^(.63)\mu & \Mor\sb{\cC}(Z, X)}~,
$$
\noindent where $\mu$ is the composition in $\cC$.
We see that the $\otimes$-composition of two singular null cubes is a
singular null cube, so the category \w{\nul{}\cC} is a enriched in left
cubical sets, with \w[.]{\nul{}\cC(X,Y):=\nul{}\Mor\sb{\cC}(X,Y)}
\end{example}

\begin{defn}
\label{dnullcat}
For $\cC$ as above, the $n$-skeleton of \w[,]{\nul{}\cC} denoted by
\w[,]{\nnC} is given by the $n$-cubical sets \w[,]{\nul{n}\Mor\sb{\cC}(X, Y)}
and \w{\NnC} is defined similarly. There is a \emph{quotient functor}
$$
\nnC~\epic~\NnC~,
$$
\noindent given by the quotient maps:
$$
\xymatrix@1{\nul{n}\Mor\sb{\cC}(X, Y)\ar[r] & \Nul{n}\Mor\sb{\cC}(X, Y)}
$$
\noindent (see~\wref[).]{eqm}

Thus \w{\NnC} is an $n$-graded category with the composition defined by the
equivalence class \w{\{f \otimes g\}} for \w[.]{\dim(f) + \dim(g) = n}
The $n$-graded categories \w{\nnC} and \w{\NnC} are enriched in
left $n$-cubical sets.
\end{defn}

These two examples \w{\nnC} and \w{\NnC} are the reason for our definition
of the partially-defined composition in $n$-graded categories in
\S \ref{dgradec} above.

We also consider functors between $n$-graded categories, and in the next
section we will use such functors to define higher order chain complexes.

For \w[,]{n = 0} the ($0$-graded) category
$$
\Nul{0}\cC = \pi\sb{0} \cC
$$
\noindent has morphisms \w{X \lra Y} given by the path components of
\w[.]{\Mor\sb{\cC} (X, Y)}

%
%
\sect{The chain category \ww{\bZO}}
\label{cccz}

A chain complex in any pointed category $\M$ may be defined as a pointed
functor from a suitable indexing category.  A higher order chain complex is
a functor between $n$-graded categories, from a more elaborate indexing category
\w{\bZO\sp{n}} (which we now describe) into the category \w{\NnC}
of \S \ref{dnullcat}. The reader can skip the explicit definitions
of \w{\bZO} and \w{\bZO\sp{n}} in \S \ref{dmonn} and \S \ref{dbzo}; we only need
to know that they are well-defined.

\begin{defn}\label{dmonn}
Let \w{N~:=~\Mon(\stt, J)} be the free monoid generated by elements $\stt$ and
$J$, so the elements of $N$ are words in the letters $\stt$ and $J$.

Let \w{\deg, \dim:N \lra(\bN\sb{0}, +)} be monoid homomorphisms defined by
\begin{align*}
\deg(\stt) &= 1 & \deg(J) &= 1\\
\dim(\stt) &= 0 & \dim(J) &= 1.
\end{align*}

For example, \w{V = \stt \stt J \stt J J} is a word in $N$, with \w{\deg(V)=6}
the length of the word $V$, and \w{\dim(V) = 3} the number of letters $J$
in $V$. Let $\emptyset$ be the empty word, which is the unit in the monoid $N$.

We associate with $J$ the unit interval \w{I = [0, 1]} and with $\stt$ the
one point space \w[.]{\{0\}} For any word $V$, let $\overline V$ be the space
defined by
$$
\overline V =
\begin{cases}
I & \hsm\text{if}\hsm V = J\\
\{0\} & \hsm \text{if}\hsm V = \stt\\
\overline V\sb{1} \times \overline V\sb{2} & \hsm \text{if}\hsm V = 
V\sb{1} V\sb{2}~.
\end{cases}
$$

We say that $V$ is \emph{in the boundary of $W$} with \ww{V, W \in N}
\wb{V\neq W} if there is an inclusion \w[.]{\overline V \subset \overline W}
This implies \w{\deg V = \deg W} and \w[.]{\dim V \leq \dim W} By projecting
the spaces \w[,]{\{0\}}  one gets the homeomorphism
$$
\overline V \cong I\sp{\dim V}~.
$$

If $V$ is in the boundary of $W$, there is a unique inclusion \w{d\sb{V, W}}
of cubes in the category $\LBox$ (see Definition \ref{dlcs}) such that
$$
\xymatrix{
\overline V\ar[d]\ar[r]^(.4)\cong & I\sp{\dim V}\ar[d]\sp{d\sb{V, W}}\\
\overline W\ar[r]^(.4)\cong & I\sp{\dim W}
}
$$
\noindent commutes.

Now, consider elements $\stt$ and \w{I\sb{n}} \wbb[,]{n \geq 1} which generate
the monoid
$$
M~:=~\Mon(\stt, I\sb{n}~:\ n \geq 1)/I\sb{n} \circ I\sb{m} = I\sb{n + m}~.
$$
\noindent The multiplication in $M$ is denoted by $\circ$. Here,
\w{\Mon(\stt, I\sb{n}~:\ n \geq 1)} denotes the free monoid. In $M$, we divide
out by the relation \w{I\sb{n} \circ I\sb{m} = I\sb{n + m}} for \w[.]{n, m \geq 1}

There is a canonical isomorphism of monoids
$$
\xymatrix@1{M\ar[r]^(.48)\cong & N}
$$
\noindent which carries $\stt$ to  $\stt$ and \w{I\sb{n}} to the $n$-fold product
\w[.]{J\sp{n} = J \cdots J} Using this isomorphism, we obtain the functions
\w{\deg} and \w{\dim} on $M$.

We introduce on $M$ a further multiplication $\otimes$ defined by
\begin{myeq}\label{eqotime}
V \otimes W~=~V \circ \stt \circ W \hsp \text{for}\hsm V, W \in M~,
\end{myeq}
\noindent where the right hand side is the product of the elements $V$, $\stt$,
and $W$ in the monoid $M$. The operation $\otimes$ is associative, but it
has no unit. For the empty word \w[,]{\emptyset\in M} we get
$$
\emptyset\otimes\emptyset~=~\emptyset \circ \stt \circ \emptyset~=~\stt.
$$
\end{defn}

\begin{defn}\label{dbzo}
We define the \emph{chain category} \w{\bZO} to be the following graded category:
the objects in \w{\bZO} are the integers \w[.]{i, j,\dotsc \in \bZ}
In addition to the identities \w[,]{1\sb{i}} with \w[,]{\dim(1\sb{i}) = 0}
the morphisms in \w{\bZO} consist of
$$
\xymatrix@1{(i, V):i\hsm \ar[r]^(.37)V & \hsm i - \deg V - 1 = j}
$$
for all \w[.]{V \in M} The composition of \w{V:i \lra j} and
\w{W:j \lra j - \deg W - 1 = k} \wb[,]{W \in M} is defined
$$
\xymatrix@1{(i, W\otimes V):i\hsm \ar[r]^(.38){W \otimes V} &
\hsm i - \deg(W \otimes V) - 1 = k}~.
$$
\noindent Here we have \w[,]{\deg(W \otimes V) = \deg W + \deg V + 1} so
that the composition is well defined. We also omit $\otimes$ in the notation
of the composite.

More precisely, morphisms in \w{\bZO} are pairs \w[,]{(i, V)} where
\w[,]{i \in \bZ} \w[,]{V \in M} and $i$ is the source of the morphism
\w{(i, V)} (also written  \w[).]{V:i \lra j} The target $j$ satisfies
\w[.]{j = i - \deg V - 1}

See Section \ref{cwc} below for an alternative point of view.
\end{defn}

The category \w{\bZO} is graded by dimension of elements in $M$, with
\w[.]{\dim(W \otimes V) = \dim(W) + \dim(V)} The $n$-\emph{skeleton}
\w{\bZO\sp{n}}  of \w{\bZO} \wb{n \geq 0}is an $n$-graded category. The 
$0$-skeleton \w{\bZO\sp{0}} consists only of identities and of the morphisms
\w[,]{V:i \to i - \deg V - 1} where $V$ is a power of the element $\stt$ in $M$.

If \w[,]{V = \emptyset} then \w{\emptyset:i \lra i - 1} is in \w[.]{\bZO\sp{0}}
The composition is \w[,]{\emptyset \otimes \emptyset = \stt:i \lra i - 2}
and so on. We observe:

\begin{lemma}
\label{lgcc}
The category \w{\bZO} is freely generated by the morphisms
\w{(i,\emptyset):i\to i - 1} and \w{(i, I\sb{k}):i \lra i - k - 1} for
\w[.]{i \in \bZ} \w[.]{k \geq 1}
\end{lemma}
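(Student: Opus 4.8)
The plan is to identify \w{\bZO} with the free category (path category) on the graph $\Gamma$ whose vertices are the integers and whose edges are exactly the listed morphisms \w{(i,\emptyset):i\to i-1} and \w{(i,I_{k}):i\to i-k-1} for \w{i\in\bZ} and \w[.]{k\geq 1} Indeed, saying that \w{\bZO} is ``freely generated'' by these morphisms means precisely that the canonical identity-on-objects functor \w{\Phi} from the free category on $\Gamma$ to \w{\bZO} --- which exists by the universal property of the path category, taking each edge to the corresponding morphism of \w[,]{\bZO} the source/target data matching by inspection --- is an isomorphism of categories. So the task is to check that $\Phi$ is a bijection on every hom-set.

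First I would make composites of generators explicit as words. Using the canonical isomorphism \w{M\cong N=\Mon(\stt,J)} I regard a morphism \w{(i,V)} of \w{\bZO} as the integer $i$ together with a word $V$ in the two letters \w{\stt,J} (its target \w{j=i-\deg V-1} being recovered from $i$ and the length of $V$). Under this identification the generator \w{(i,\emptyset)} has underlying word \w{\emptyset} and \w{(i,I_{k})} has underlying word \w[.]{J^{k}} Since composition in \w{\bZO} is given by $\otimes$, which by \eqref{eqotime} satisfies \w{V\otimes W=V\circ\stt\circ W} and is associative, a non-empty path \w{(g_{1},\dotsc,g_{m})} in $\Gamma$ with \w{g_{1}} based at $i$ (and \w{g_{1}} applied first) is carried by $\Phi$ to \w{(i,W)} where, reading each \w{g_{\ell}} as its underlying word,
$$
W~=~g_{m}\circ\stt\circ g_{m-1}\circ\stt\circ\cdots\circ\stt\circ g_{1}~\in~N~.
$$
In other words $W$ is obtained by concatenating the underlying words in reverse composition order, inserting a single separating \w{\stt} between consecutive ones. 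In particular the only occurrences of the letter \w{\stt} in $W$ are those \w{m-1} separators, so $W$ has the form \w{W=J^{k_{m}}\stt J^{k_{m-1}}\stt\cdots\stt J^{k_{1}}} with each \w[.]{k_{\ell}\geq 0}

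For surjectivity of $\Phi$ on hom-sets I would take an arbitrary morphism \w{(i,W):i\to j} of \w[,]{\bZO} let $t$ be the number of occurrences of \w{\stt} in the word $W$, and decompose $W$ uniquely as \w{W=J^{k_{t+1}}\stt J^{k_{t}}\stt\cdots\stt J^{k_{1}}} with each \w[.]{k_{\ell}\geq 0} Letting \w{g_{\ell}} be the generator whose underlying word is \w{\emptyset} when \w{k_{\ell}=0} and \w{I_{k_{\ell}}} when \w[,]{k_{\ell}\geq 1} with \w{g_{1}} based at $i$ and the source of each \w{g_{\ell}} equal to the target of \w[,]{g_{\ell-1}} one checks --- the degrees along this path sum to \w{\deg W+1} --- that \w{(g_{1},\dotsc,g_{t+1})} is a genuine path from $i$ to $j$, and by the formula above \w[.]{\Phi(g_{1},\dotsc,g_{t+1})=(i,W)} For injectivity I would note that if two paths from $i$ share an image, then either both are empty (the empty path goes to \w[,]{1_{i}} which is not of the form \w[)]{(i,V)} or both are non-empty, in which case the number of \w{\stt}'s in the common image word is one less than the length of each, so the lengths agree; decomposing that word along its \ww{\stt}-letters then recovers the \w{k_{\ell}} and hence each \w{g_{\ell}} uniquely, using \w{I_{k}\neq I_{k'}} for \w{k\neq k'} and \w{\emptyset\neq I_{k}} in $M$. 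Hence $\Phi$ is bijective on hom-sets, so an isomorphism.

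The single delicate point --- and the place where a hasty argument would break --- is that composition in \w{\bZO} is $\otimes$, which \emph{inserts} a separating \w[,]{\stt} not the monoid product $\circ$ of $M$ (for which one would instead have \w[).]{I_{n}\circ I_{m}=I_{n+m}} It is exactly this inserted \w{\stt} that makes the factorization of a word into generators unique, and that keeps each \w{I_{n}} indecomposable --- it is \emph{not} the \ww{\bZO}-composite of \w{I_{a}} and \w{I_{b}} with \w[.]{a+b=n} The rest --- functoriality of $\Phi$, and the degree/target bookkeeping --- is routine.
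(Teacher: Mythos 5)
The paper offers no proof of this lemma; it is stated as an observation after the discussion of $\bZO^{0}$. Your argument is correct and supplies the missing details: you reduce the claim to the unique decomposition of a word in $N=\Mon(\stt,J)$ into $\stt$-separated blocks of $J$'s, and you correctly isolate the essential point, namely that the $\bZO$-composition $\otimes$ always inserts a fresh separating $\stt$, so the relation $I_{n}\circ I_{m}=I_{n+m}$ in $M$ (which would destroy unique factorization under $\circ$) never comes into play.
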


%
%
\sect{Higher order chain complexes}
\label{chocc}

We are now in a position to define higher order chain complexes.
In Section \ref{cehor}, we will introduce the notion of a higher order resolution,
which is a special kind of higher order chain complex.

\begin{defn}\label{dprech}
Given an $n$-graded category $\cT$ enriched in left $n$-cubical sets
(such as \w[),]{\cT = \NnC} we consider a functor of $n$-graded categories
$$
K:\bZO\sp{n}~\to~\cT
$$
\noindent which carries an object \w{i \in \bZ} to the object \w{K\sb{i}:= K(i)}
in $\cT$. We say that $K$ satisfies the \emph{inclusion property} if
the following holds:

Given morphisms \w{V, W:i \lra j} in \w{\bZO\sp{n}} such that $V$ is in the
boundary of $W$, then the induced morphisms \w{K(V)} and \w{K(W)} in $\cT$
satisfy the equation
\begin{myeq}\label{epcc}
K(V) = d\sb{V, W}\sp{\ast} K(W) \hs \text{in}\hsm \Mor_\cT(K\sb{i}, K\sb{j})~.
\end{myeq}
\noindent Here \w{d\sb{V, W}\sp{\ast}} is defined by the structure of
\w{\Mor_\cT(K\sb{i}, K\sb{j})} as a left cubical set.

A functor $K$ satisfying the inclusion property \wref{epcc} is called an
\emph{$n$-th order pre-chain complex} in \w[.]{\cT}
\end{defn}

\begin{defn}\label{dbasedon}
Let \w{N > M} and \w[.]{\bZ(N, M) = \{k \in \bZ, N \geq k \geq M\}}
Then we obtain the full subcategory
$$
\bZ(N, M)\sb{\otimes} \subset \bZO
$$
\noindent consisting of objects \w[.]{k \in \bZ(N, M)} We say that $K$ is
\emph{concentrated} in \w{\bZ(N, M)} if
\w{K:\bZ(N, M)\sb{\otimes}\sp{n} \lra \cT} is a functor of $n$-graded categories.

Assume given a quotient functor \w[,]{\cT\sp{0} \lra \cA} (that is, a full
functor which is the identity on objects) yielding induced morphisms
$$
\delta\sb{i} = K(i, \emptyset)\sb{\ast}:K\sb{i}~\to~ K\sb{i - 1}
$$
\noindent in $\cA$ for each \w[.]{i \in \bZ}  We then say that $K$ is
\emph{based on the diagram}
\begin{myeq}
K\sb{M}~\leftarrow~\dotsc~\leftarrow~K\sb{i-2}~\xleftarrow{\delta\sb{i-1}}~
K\sb{i-1}~\xleftarrow{\delta\sb{i}}~K\sb{i}~\leftarrow~\dotsc~\leftarrow~K\sb{N}
\end{myeq}
in the category $\cA$.
\end{defn}

\begin{defn}\label{dnocc}
Let $\cC$ be a category enriched in pointed spaces (with zero maps $o$).
For \w[,]{\cT = \NnC} we consider a functor $K$ with the inclusion property,
$$
\xymatrix@1{K:\bZO\sp{n}\ar[r] & \NnC}~.
$$
\noindent We have in \w{\bZO\sp{n}} the \wwb{n + 1}tuple of morphisms
\w[:]{i \lra i - n - 2}
\begin{myeq}\label{eqtom}
(i, \partial I\sb{n + 1}) =
\begin{cases}
(i, \emptyset \otimes I\sb{n}),\\ (i, I\sb{n} \otimes \emptyset),\\
(i, I\sb{r} \otimes I\sb{s}), & r + s = n, r \geq 1, s \geq 1
\end{cases}
\end{myeq}
which yields the \wwb{n + 1}tuple of $n$-tracks
$$
K(i, \partial I\sb{n + 1})~=~(K(i, \emptyset \otimes I\sb{n}),~K(i, I\sb{1} \otimes
I\sb{n - 1}),~\dotsc,~K(i, I\sb{n - 1} \otimes I\sb{1}),~K(i, I\sb{n} 
\otimes \emptyset))~.
$$
\noindent These tracks are represented by maps
\w[.]{I\sp{n} \lra \Mor\sb{\cC}(K\sb{i}, K\sb{i - n - 2})}
In fact, these $n$-tracks yield a map
$$
\xymatrix@1{\alpha:S\sp{n} \approx \partial(I\sp{n + 1})\ar[r] &
\Mor\sb{\cC}(K\sb{i}, K\sb{i - n - 2})}
$$
\noindent on the boundary of the \wwb{n + 1}cube, well-defined up to homotopy.
Hence, the map $\alpha$ determines an \emph{obstruction element}
\begin{myeq}\label{eqoe}
\OO K(i, \partial I\sb{n + 1}) \in \pi\sb{n} \Mor\sb{\cC}(K\sb{i}, K\sb{i - n - 2})~.
\end{myeq}

We write \w{D\sb{n}(K\sb{i}, K\sb{i - n - 2})} for
\w[,]{\pi\sb{n}\Mor\sb{\cC}(K\sb{i}, K\sb{i - n - 2})} since these form a 
natural system \w{D\sb{n}} (see \S \ref{dsma} below).

We say that $K$ is an \emph{$n$-th order chain complex} in \w{\NnC} if the 
obstruction elements \wref{eqoe} vanish for all $i$. This is the 
\emph{obstruction property} of $K$.
\end{defn}

In Section \ref{cntc} below, we determine the basic formulas satisfied by 
these obstruction elements.

\begin{defn}\label{dtoda}
For $\cC$ as above, let
\begin{myeq}\label{eqtdb}
\xymatrix@1{K\sb{0} & K\sb{1}\ar[l]\sb{\delta\sb{1}} & 
K\sb{2}\ar[l]\sb{\delta\sb{2}} & \dotsc\ar[l] &
K\sb{n + 2}\ar[l]\sb{\delta\sb{n + 2}}}
\end{myeq}
be a diagram in \w{\cA = \pi\sb{0}(\cC)} \wb[.]{n \geq 1}
Consider all functors
$$
K:\bZ(0, n + 2)\sb{\otimes}\sp{n}~\epic~\cT = \NnC
$$
\noindent based on the diagram \wref{eqtdb} which satisfy the inclusion property.
Each such functor yields an obstruction element
$$
\OO K(n + 2, \partial I\sb{n + 1}) \in D\sb{n}(K\sb{n + 2}, K\sb{0})~.
$$
The set of all these elements is the classical \emph{higher order Toda bracket}
$$
\langle\delta\sb{1}, \dotsc, \delta\sb{n + 2}\rangle 
\subset D\sb{n}(K\sb{n + 2}, K\sb{0})
$$
\noindent (see \cite{GWalkL}).
\end{defn}

The set can be empty. If there exists an $n$-th order chain complex $K$
based on  diagram \wref[,]{eqtdb} then of course
\w[,]{0 \in \langle\delta\sb{1}, \dotsc, \delta\sb{n + 2}\rangle} by the 
obstruction property of $K$.

%
%
\sect{The $W$-construction}
\label{cwc}

We now present an alternative description of higher order chain complexes
not actually needed for this paper, based on the classical bar construction
\w{W\K} going back to Boardman-Vogt (see \cite[\S 3]{BVogHI} and
\cite[\S 6]{BoaH}). This construction is a topologically-enriched ``cofibrant
replacement''  for any small category $\K$ (not necessarily unital), which
serves as the indexing category for lax versions of functors \w[.]{\K\to\TT}
A cubically enriched variant of \w{W\K} was defined in \cite[\S 3.1]{BJTurH}
and \cite[\S 3.4]{BBlaC}; we shall require the following pointed setting:

\begin{defn}\label{dwconst}
Let $\K$ be a small category enriched in \w{(\Seta,\wedge)} (with zero maps
$o$). The  \emph{pointed W-construction} on $\K$, denoted by \w[,]{\Ws\K} 
is the (non-unital) category enriched in \w{(\CSa,\sotimes)} with object set
\w{\Obj\K} defined as follows:

First, for every \w[,]{a,b\in\Obj\K} the underlying graded pointed category 
of \w{\Ws\K} has an (indecomposable) morphism ($n$-cube) \w{\II{n}{\fd}} in 
\w{\Ws\K(a,b)\sb{n}} associated to each composable sequence
\begin{myeq}\label{eqcompseq}
\w{\fd=(a=a\sb{n+1}\xra{f\sb{n+1}}a\sb{n}\xra{f\sb{n}}a\sb{n-1}\dotsc a\sb{1}
\xra{f\sb{1}}a\sb{0}=b)}
\end{myeq}
\noindent of length \w{n+1} in $\K$.
In addition, \w{\Ws\K(a,b)} has a degenerate \wwb{n+k}cube
\w{(s\sp{j\sb{1}})\sp{\ast}\dotsc(s\sp{j\sb{k}})\sp{\ast}\II{n}{\fd}} for 
each iterated projection 
\w{s\sp{j\sb{k}}\dotsc s\sp{j\sb{1}}:I\sp{n+k}\to I\sp{n}} in
$\squa$ (with identifications according to the cubical identities).
The zero map in degree $k$ is
\w[,]{\II{k}{o}:=(s\sb{k})\sp{\ast}\dotsc(s\sb{1})\sp{\ast}\II{0}{o}}
and we identify \w{\II{n}{\fd}} with \w{\II{n}{o}} whenever at least one 
of the maps \w{f\sb{1},\dotsc,f\sb{n+1}} is $o$. Then \w{\Ws\K} is freely 
generated as a graded category with zero maps by these cubes. Composition 
in the category \w{\Ws\K} is denoted by $\sotimes$.

The cubical structure is determined by the face maps of the non-degenerate 
indecomposable cubes \w{\II{n}{\fd}} and the cubical identities, as follows:
\begin{enumerate}
\renewcommand{\labelenumi}{(\alph{enumi})~}
\item The $i$-th $1$-face of \w{\II{n}{\fd}} is
\w{\II{n-1}{f\sb{1}\circ\dotsc\circ (f\sb{i}\cdot f\sb{i+1})\circ\dotsc 
f\sb{n+1}}}
\wwh that is, we carry out (in the category $\K$) the $i$-th composition in 
\w[.]{\fd}
\item The $i$-th $0$-face of \w{\II{n}{\fd}} is the composite
\w[.]{\II{i-1}{f\sb{1}\circ\dotsc\circ f\sb{i}}~\sotimes~
       \II{n-i}{f\sb{i+1}\circ\dotsc\circ f\sb{n+1}}}
\item The cubical structure on the composites \w{\II{j}{\fd}\sotimes\II{k}{\gd}}
is defined by \wref{eqotimes} (or \wref[).]{eqfacemaps}
\end{enumerate}
\end{defn}

\begin{defn}
\label{dgamma}
Let $\Gamma$ be the category enriched in \w{(\Seta,\wedge)} with object set
$\bZ$ and a single non-zero arrow \w{d\sb{k+1}:k+1\to k} for each
\w[,]{k\in\bZ} satisfying \w{d\sb{k}\circ d\sb{k+1}=o} for all $k$.
\end{defn}

\begin{prop}
Let $\M$ be a category enriched in cubical sets with zero maps. There is a 
one-to-one correspondence between pointed cubical functors \w{\Ws\Gamma\to\M}
and pre-chain complexes in \w[,]{\nul{}\M} which restricts to a one-to-one 
correspondence between pointed cubical functors \w{\sk{n}\Ws\Gamma\to\M} and
$n$-th order pre-chain complexes in \w[.]{\nul{n}\M}
\end{prop}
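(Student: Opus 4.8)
The plan is to read off both sides from the free generations at hand: by Definition~\ref{dwconst}, $\Ws\Gamma$ is freely generated, as a graded category with zero morphisms, by its indecomposable cubes, while by Lemma~\ref{lgcc}, $\bZO$ is freely generated by the morphisms $(i,\emptyset)$ and $(i,I_{k})$, $k\ge1$; so a pointed cubical functor out of $\Ws\Gamma$, and a functor out of $\bZO$, are each determined by their values on objects and on these generators, and the content of the statement is that the two systems of relations coincide. I would begin by pinning down the indecomposable cubes of $\Ws\Gamma$. In $\Gamma$ the only non-identity non-zero morphism $a\to b$ is $d_{a}$ (with $b=a-1$), and $d_{k}\circ d_{k+1}=o$. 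Hence any interior composition of two non-identity arrows in a composable sequence yields $o$; so, modulo the normalisations of the $W$-construction (identity arrows contributing only degeneracies, and a sequence containing $o$ being identified with $\II{n}{o}$), the only surviving indecomposable $n$-cube with source $i$ is $\sigma^{n}_{i}:=\II{n}{\fd}$ for $\fd=(i\xra{d_{i}}i-1\xra{d_{i-1}}\dotsb\xra{d_{i-n}}i-n-1)$, lying in $\Ws\Gamma(i,i-n-1)_{n}$. Thus a pointed cubical functor $F\colon\Ws\Gamma\to\M$ is precisely a choice of objects $K_{i}:=F(i)$ ($i\in\bZ$) together with cubes $F(\sigma^{n}_{i})\in\Mor_{\M}(K_{i},K_{i-n-1})_{n}$, subject only to compatibility with the face structure (a)--(c) of Definition~\ref{dwconst}.

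Next I would unwind those formulas and compare. Formula (a) computes the $1$-faces of $\sigma^{n}_{i}$ by performing interior compositions of consecutive $d$'s, which are $o$, so each $(d^{j}_{1})^{\ast}F(\sigma^{n}_{i})$ is the zero cube; equivalently $F(\sigma^{n}_{i})$ lies in $\nul{}\Mor_{\M}(K_{i},K_{i-n-1})_{n}$. Formula (b) gives the $(r+1)$-st $0$-face of $\sigma^{n}_{i}$ as $\sigma^{r}_{i-s-1}\sotimes\sigma^{s}_{i}$ for $r+s=n-1$, and (c) records that composition is $\sotimes$. On the other side, Lemma~\ref{lgcc} shows a functor $K\colon\bZO\to\nul{}\M$ to be a choice of objects $K_{i}$ together with $K(i,\emptyset)\colon K_{i}\to K_{i-1}$ and, for $n\ge1$, an $n$-cube $K(i,I_{n})\in\nul{}\Mor_{\M}(K_{i},K_{i-n-1})_{n}$; and the inclusion property~\eqref{epcc} is determined by its codimension-one instances (the rest follow by factoring the inclusions $d_{V,W}$ through faces and using functoriality), which for $W=I_{n}$ read $\partial^{r+1}K(i,I_{n})=K(i-s-1,I_{r})\otimes K(i,I_{s})$, $r+s=n-1$ --- because $\overline{I_{r}\otimes I_{s}}=I^{r}\times\{0\}\times I^{s}\subset I^{n}$ realises the face inclusion $d^{r+1}_{0}$ and $(i,I_{r}\otimes I_{s})=(i-s-1,I_{r})\otimes(i,I_{s})$ in $\bZO$. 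Under the dictionary $\sigma^{0}_{i}\leftrightarrow(i,\emptyset)$ and $\sigma^{n}_{i}\leftrightarrow(i,I_{n})$ ($n\ge1$), these are term for term the equations imposed by (a) and (b); hence $F\mapsto K$, with $K_{i}=F(i)$, $K(i,\emptyset)=F(\sigma^{0}_{i})$, $K(i,I_{n})=F(\sigma^{n}_{i})$, is a well-defined bijection from pointed cubical functors $\Ws\Gamma\to\M$ onto pre-chain complexes in $\nul{}\M$.

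The skeletal statement is this argument truncated at dimension $n$: $\sk{n}\Ws\Gamma$ is freely generated by the $\sigma^{m}_{i}$ with $m\le n$, its composition and face operators never leave dimension $\le n$, and $\bZO^{n}$ is correspondingly generated by the $(i,I_{m})$ with $m\le n$, so the bijection restricts to one between pointed cubical functors $\sk{n}\Ws\Gamma\to\M$ and $n$-th order pre-chain complexes in $\nul{n}\M$. I expect the genuine work to lie in the first step together with the comparison in the second paragraph: identifying the indecomposable cubes of $\Ws\Gamma$, and checking that the $W$-construction face formulas (a)--(c) match, under $\sigma^{n}_{i}\leftrightarrow(i,I_{n})$, the boundary inclusions $d_{V,W}$ that govern the inclusion property. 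Once that dictionary is in place, the coincidence of the two systems of defining equations --- and hence the bijectivity --- is purely formal.
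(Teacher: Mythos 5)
Your argument is correct, and it arrives at the same dictionary $\sigma^{n}_{i}=\II{n}{(d_{i},\dotsc,d_{i-n})}\leftrightarrow(i,I_{n})$ that underlies the paper's proof, but by a more hands-on route. The paper does not compare generators and relations directly: it first promotes $\bZO$ to a left cubically enriched category $\hZO$ by declaring $d_{V,W}^{\ast}W=V$, observes that a functor $\bZO\to\nul{}\M$ is a pre-chain complex exactly when it extends to an enriched functor $\hZO\to\nul{}\M$, and then transports across the adjunction $\dL\dashv\nul{}$ of Remark~\ref{radjoint} together with a natural isomorphism $\dL(\hZO)\cong\Ws\Gamma$ induced by $\Phi$. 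Your proof unpacks that isomorphism explicitly: you locate the indecomposable cubes of $\Ws\Gamma$ and check term by term that the face formulas (a)--(b) of Definition~\ref{dwconst} reproduce the inclusion property. What the paper's packaging buys is that the monoidal adjunction does the bookkeeping, in particular explaining automatically why it suffices to impose the face relations only on the indecomposables; what yours buys is transparency, at the cost of having to argue (as you correctly sketch) that codimension-one instances of \eqref{epcc} generate the general case by factoring $d_{V,W}$ and splitting across $\otimes$-decompositions. One small point worth surfacing: your identification of the indecomposables of $\Ws\Gamma(i,i-n-1)_{n}$ as the single cube $\sigma^{n}_{i}$ uses the normalization of the $W$-construction under which sequences containing identity arrows contribute only degenerate cubes; this convention is implicitly in force (the paper's isomorphism $\dL(\hZO)\cong\Ws\Gamma$ relies on it too) but is not spelled out in Definition~\ref{dwconst}, so it is good that you flagged it.
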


\begin{proof}
Since \w{\bZO} is a free graded category, by Lemma \ref{lgcc}, we can define 
a one-to-one functor of graded categories \w{\Phi:\bZO\to\Ws\Gamma} which 
is the identity on objects by setting \w{\Phi(i,\emptyset):=\II{0}{d\sb{i}}} 
and \w[,]{\Phi(i,I\sb{k}):=\II{k}{\fd}} for 
\w[.]{\fd:=(i\xra{d\sb{i}}i-1\to\dotsc i-k\xra{d\sb{i-k}}i-k-1)}

We can think of \w{\bZO} as a category \w{\hZO} enriched in \w{(\LCSa,\sotimes)}
by setting \w{d\sp{\ast}\sb{V,W}(W)=V} if \w[,]{V\subseteq W} and adding zero 
maps. Note that a functor \w{K:\bZO\to\nul{}\M} is a pre-chain complex if 
and only if it induces a pointed cubical functor \w[.]{\hat{K}:\hZO\to\nul{}\M}

The universal enveloping functor \w{\dL:\LCSa\to\CSa} of Remark \ref{radjoint} 
is monoidal with respect to $\sotimes$, so the adjunction \wref{eqadjoint}
extends to categories of enriched functors. Moreover, $\Phi$ induces a 
natural isomorphism of pointed cubical categories
\begin{myeq}
\label{eqwconst}
\dL(\hZO)~\cong~\Ws\Gamma~,
\end{myeq}
\noindent so left cubical functors \w{\hZO\to\nul{}\M} indeed correspond to 
pointed cubical functors \w[.]{\Ws\Gamma\to \M} Since this correspondence 
preserves the grading, the same is true for $n$-th order pre-chain complexes.
\end{proof}

%
%
\sect{Resolutions and derived functors}
\label{crdf}

We now recall the classical definitions of derived functors and resolutions
(which are ordinary chain complexes) in additive categories. Our higher 
order resolutions will be higher order chain complexes based on such 
ordinary resolutions (in the sense of \S \ref{dbasedon}).

\begin{defn}
Let $\cA$ be a category enriched in abelian groups, i.e., a preadditive
category.
We denote the morphism sets for objects $X$, $Y$ in $\cA$ by
$$
\Hom\sb{\cA}(X, Y) = \Mor\sb{\cA}(X, Y)~.
$$
\noindent This is an abelian group, and morphisms \w{f:X' \lra X} and
\w{g:Y' \lra \ Y} in $\cA$ induce homomorphisms \w{\Hom(f, Y)} and
\w[.]{\Hom(X, g)} Let $\aC$ be a full subcategory of $\cA$.

Let $Y$ be an object in $\cA$. An \emph{$\aC$-resolution} of $Y$ is a diagram
$$
A\sbu = (\xymatrix@1{\dotsc\ar[r]\sp{\delta\sb{2}} & 
A\sb{1}\ar[r]\sp{\delta\sb{1}} &
A\sb{0}\ar[r]\sp{\delta\sb{0}} & A\sb{-1}})
$$
\noindent in $\cA$ with \w{A\sb{-1} = Y} and \w{A\sb{i} \in \aC} for
\w[,]{i \geq 0} such that, for all objects $B$ in $\aC$, the induced diagram
\w{\Hom(B, A\sbu)} is an exact sequence of abelian groups; in particular,
\w{\Hom(B, \delta\sb{0})} is surjective.

An \emph{$\aC$-coresolution} of $X$ is a diagram
$$
A\ubu = (\xymatrix@1{A\sb{1}\ar[r]\sp{\delta\sb{1}} & 
A\sb{0}\ar[r]\sp{\delta\sb{0}} &
A\sb{-1}\ar[r]\sp{\delta\sb{-1}} & \dotsc})
$$
\noindent in $\cA$ with \w{A\sb{1} = X} and \w{A\sb{i} \in \aC} for
\w[,]{i \geq 0} such that for all objects $B$ in $\aC$ the induced diagram
\w{\Hom(A\ubu, B)} is an exact sequence of abelian groups. Here
\w{\Hom(\delta\sb{1}, B)} is surjective.
\end{defn}

The next result is proved in~\cite[1.3]{BJSe}:

\begin{lemma}
Suppose
\begin{enumerate}
\renewcommand{\labelenumi}{(\arabic{enumi})\ }
\item the coproduct of any family of objects of $\aC$ exists in $\cA$ and 
belongs to $\aC$ again,
\item there is a small subcategory $\gC$ of $\aC$ such that every object 
of $\aC$ is a retract of a coproduct of a family of objects from $\gC$,
\end{enumerate}
\noindent then every object of $\cA$ has an $\aC$-resolution.
\end{lemma}

The dual statement also holds:

\begin{lemma}
Suppose
\begin{enumerate}
\renewcommand{\labelenumi}{(\arabic{enumi})'\ }
\item the product of any family of objects of $\aC$ exists in $\cA$ and belongs 
to $\aC$ again,
\item there is a small subcategory $\gC$ of $\aC$ such that every object of 
$\aC$ is a retract of a product of a family of objects from $\gC$,
\end{enumerate}
then every object of $\cA$ has an $\aC$-coresolution.
\end{lemma}

One obtains~(1)' and (2)' by replacing the categories $\cA$ and $\aC$, 
respectively, in (1) and~(2) by the opposite categories \w{\cA\op} and 
\w[.]{\aC\op} Given a functor \w[,]{F:\cA \lra \dA} where $\dA$ is an 
abelian category and $F$ is linear (i.e., enriched in the category of abelian 
groups), its \emph{derived functors} are defined to be the homology 
(respectively, cohomology) groups
\begin{align*}
(L\sb{n} F)(X)& := \rH\sb{n} F(A\sbu)~,\\
(R\sp{n} F)(Y)& := \rH\sp{n} F(A\ubu)~.
\end{align*}
Here \w{A\sbu} is a resolution of $X$ and \w{A\ubu} 
a coresolution of $Y$.

We need the following concept of a $\Sigma$-algebra which allows the 
definition of a bigraded \ww{\Ext}-group.

\begin{defn}\label{dsa}
A \emph{$\Sigma$-algebra} \w{\cA = (\cA, \aC, \Sigma)} is an 
additive category $\cA$ together with an additive subcategory $\aC$ 
and an additive endofunctor \w{\Sigma:\cA\to\cA} of $\cA$ which 
carries $\aC$ to $\aC$ and which carries an $\aC$-resolution 
\w{A\sbu} of $X$ in $\cA$ to an $\aC$-resolution 
\w{\Sigma A\sbu} of \w{\Sigma X} in $\cA$. 

Dually, we define an \emph{$\Omega$-algebra} \w{\cA = (\cA, \aC, \Omega)} 
where $\Omega$ carries an $\aC$-coresolution of $X$ in $\cA$ to 
an $\aC$-coresolution of \w{\Omega X} in $\cA$.
\end{defn}

Given a $\Sigma$-algebra $\cA$ and objects $X$, $Y$ in $\cA$, 
we define the \emph{bigraded \ww{\Ext}-group} by the cohomology
\begin{myeq}
\label{eqbeg}
\begin{split}
E\sb{2}\sp{s, t} &= \Ext\sb{\cA}\sp{s}(\Sigma\sp{t} X, Y),\\
\notag &= \rH\sp{s} \Hom\sb{\cA}(\Sigma\sp{t} A\sbu, Y),\\
\notag &=
\Ker \Hom\sb{\cA}(\Sigma\sp{t} \delta\sb{s + 1}, Y)/
\image \Hom\sb{\cA}(\Sigma\sp{t} \delta\sb{s}, Y).
\end{split}
\end{myeq}
Here \w{\Sigma\sp{t} = \Sigma \circ \dotsc \circ \Sigma} is the $t$-fold 
composite of $\Sigma$. Such groups appear in the $E\sb{2}$-term of the 
Adams spectral sequence.

%
%
\sect{Mapping algebras}
\label{cma}

In this section we consider topological analogues of $\Sigma$-algebras and
$\Omega$-algebras of Definition \ref{dsa}, in order to provide a setting for
defining higher order resolutions, and thus higher order derived functors.

\begin{defn}\label{dsma}
Let $\cC$ be a category enriched in pointed spaces.
We call $\cC$ a \emph{$\Sigma$-mapping algebra} if the category
\w{\cA = \pi\sb{0} \cC} is a $\Sigma$-algebra and for each \w[,]{n \geq 1}
the bifunctor \w[,]{D\sb{n}:\cA\op \times \cA\to\Ab}
defined by \w[,]{D\sb{n}(X, Y):= \pi\sb{n} \Mor\sb{\cC}(X, Y)} is equipped with a
natural isomorphism:
$$
\tau\sb{\Sigma}:D\sb{n}(X, Y)~\xra{\cong}~\Hom\sb{\cA}(\Sigma\sp{n} X, Y)
$$
\noindent for \w{X$ in $\aC} and $Y$ in $\cA$. Here
\w{\Sigma\sp{n} = \Sigma \circ \dotsc \circ \Sigma} is the $n$-fold composite
of the endofunctor $\Sigma$ of $\aC$.

Dually, $\cC$ is an \emph{$\Omega$-mapping algebra} if the category
\w{\cA = \pi\sb{0} \cC} is an $\Omega$-algebra and for each \w[,]{n \geq 1}
\w{D\sb{n}(X, Y):= \pi\sb{n} \Mor\sb{\cC}(X, Y)} is equipped with a natural
isomorphism:
$$
\tau\sb{\Omega}:D\sb{n}(X, Y)~\xra{\cong}~\Hom\sb{\cA}(X, \Omega\sp{n} Y)
$$
\noindent for $X$ in $\cA$ and $Y$ in $\aC$. 

In the context of a pointed model category, such as \w[,]{\Ta} one should think of 
\w{\tau\sb{\Sigma}} and \w{\tau\sb{\Omega}} as being related by the adjunction
of $\Omega$ and $\Sigma$.
\end{defn}

\begin{defn}\label{dsmac}
A $\Sigma$-mapping algebra $\cC$ is \emph{complete} if the endofunctor
$\Sigma$ of $\cA = \pi\sb{0} \cC$ is induced by an endofunctor $\Sigma$ of
$\cC$ and we have a binatural transformation
$$
\tau\sb{\Sigma}:\Mor\sb{\cC}(\Sigma A, Y)~\to~\Omega \Mor\sb{\cC}(A, Y)~,
$$
where we use the topological loop space functor on pointed spaces. We require
the functor \w{\Sigma:\cC \lra \cC} to preserve zero maps and coproducts
in $\cC$.

Dually, an $\Omega$-mapping algebra $\cC$ is \emph{complete} if the
endofunctor $\Omega$ of \w{\cA = \pi\sb{0} \cC} is induced by an endofunctor
$\Omega$ of $\cC$ and we have a binatural transformation
$$
\tau\sb{\Omega}:\Mor\sb{\cC}(Y, \Omega A)~\to~\Omega \Mor\sb{\cC}(Y, A)~.
$$
\noindent Again we require that the functor \w{\Omega:\cC\to\cC} preserve
zero maps and products in $\cC$. Iteration of \w{\tau\sb{\Sigma}} (respectively,
\w[)]{\tau\sb{\Omega}} induces the isomorphisms \w{\tau\sb{\Sigma}} 
(respectively, \w[)]{\tau\sb{\Omega}} in Definition \ref{dsma}.
\end{defn}

\begin{example}
\label{egspec}
There are a number of different simplicial model categories of spectra,
including the simplicial spectra of \cite{BFrieH}, the $S$-modules of
\cite{EKMMayR}, the symmetric spectra of \cite{HSSmiS}, and the orthogonal
spectra of \cite{MMSShipM}. All of these are naturally  pointed;  in this and
later sections, we let \w{\Sp} be any category of spectra which is
enriched in pointed topological spaces (or simplicial sets), with pointed
function spaces
$$
\Mor\sb{\Sp}(X,Y)~=~\Map(X, Y)\hspace*{7mm}\text{for $X$ and $Y$ in~}\Sp~.
$$
\noindent We always assume that $X$ and $Y$ are both fibrant and cofibrant 
in our chosen model category.

Clearly zero maps \w{o:X\to Y} are defined in \w[.]{\Sp} Let $\dX$ be
a class of objects in \w{\Sp} closed under coproducts and the suspension
$\Sigma$: that is, for \w{A, A' \in \dX} we have
\w[.]{A \vee A', \Sigma A \in \dX} Then we have \w[,]{\{\dX\} \subset \Sp}
where \w{\{\dX\}} is the full subcategory in \w{\Sp} with objects in $\dX$.
In this case \w[,]{\cC = \Sp} with
\w[,]{\aC = \pi\sb{0}\{\dX\} \subset \cA = \pi\sb{0}\cC} is a complete
$\Sigma$-mapping algebra.

Dually, let $\dY$ be a class of objects in \w{\Sp} such that $\dY$ is
closed under products and loop functor $\Omega$, that is, for
\w[,]{B, B' \in \dY} we have \w[.]{B \times B',\Omega B \in \dY} Then
we have
$$
\{\dY\} \subset \Sp~,
$$
\noindent where \w{\{\dY\}} is the full subcategory in \w{\Sp} with
objects in $\dY$. Then \w{\cC = \Sp} with
\w{\aC = \pi\sb{0}\{\dY\} \subset \cA = \pi\sb{0}\cC} is a complete
$\Omega$-mapping algebra.
\end{example}

\begin{example}
Let $p$ be a prime and let \w{H = \rH\Fp} be the mod $p$  Eilenberg-Mac~Lane
spectrum. Let $\dY$ be given by all products
$$
\Omega\sp{n\sb{1}} H \times \Omega\sp{n\sb{2}} H \times \dotsc 
\times \Omega\sp{n\sb{k}} H
$$
\noindent with \w{n\sb{i} \geq 0} for \w{i = 1, \dotsc,k} \wb[.]{k \geq 0}
Then \w{\cC = \Sp} with \w{\aC = \pi\sb{0}\{\dY\}} is a complete
$\Omega$-mapping algebra, which we call the
\emph{Eilenberg-Mac~Lane mapping algebra}. This is used in the Adams 
spectral sequence.
\end{example}

\begin{remark}
In the examples of mapping algebras above the category
\w{\cC = \Sp} is very large. For computations, however, we consider only the 
mapping algebras \w{\cC'} which are generated by \w{\{\dX\}} 
(respectively, \w[)]{\{\dY\}} and two further objects $X$ and $Y$ in \w[.]{\Sp}
\end{remark}

%
%
\sect{Higher order resolutions}
\label{cehor}

We can use the definitions of Section \ref{cma} to define higher order
resolutions, and state our main results about them, which will be proved in
Section \ref{ctaprt}.

These higher order resolutions will be used in Section \ref{cext} to describe
the higher terms of the Adams spectral sequence, providing an alternative way
of setting up this spectral sequence, avoiding the classical topological
construction of \cite{AdSS} using a tower of (co)fibrations of spectra.

\begin{defn}
Let $\cC$ be a $\Sigma$-mapping algebra with
\w[.]{\aC \subset \cA = \pi\sb{0} \cC} An $n$-th order chain complex
$$
\xymatrix@1{K:\bZ(\infty, -1)\sb{\otimes}\sp{n}\ar[r] & \NnC}
$$
\noindent is called \emph{$n$-th order resolution} of $X$ in \w{\NnC}
if it is based on an $\aC$-resolution in $\cA$,
$$
A\sbu = (\xymatrix@1{\dotsc\ar[r]\sp{\delta\sb{2}} & 
A\sb{1}\ar[r]\sp{\delta\sb{1}} &
A\sb{0}\ar[r]\sp{\delta\sb{0}} & A\sb{-1}})~,
$$
of \w[.]{X = A\sb{-1}}
\end{defn}

\begin{rthm}\label{tresol}
Given an $\aC$-resolution \w{A\sbu} of $X$ in $\cA$,
there is an $n$-th order resolution $K$ of $X$ based on \w{A\sbu}
in \w{\NnC} \wb[.]{n \geq 1}
\end{rthm}

This will follow from the more general Theorem \ref{thrt} below.

\begin{remark}
The Theorem shows that, if `minimal' $\aC$-resolutions exist (as in the
case of the Adams spectral sequence), then also an $n$-th order minimal
resolution exists which is based on a minimal resolution in $\cA$. This is
of high importance for computations.
\end{remark}

\begin{defn}
Dually, let $\cC$ be a $\Omega$-mapping algebra with
\w[.]{\aC \subset \cA = \pi\sb{0} \cC} If an $n$-th order chain complex
$$
\xymatrix@1{L:\bZ(+1, -\infty)\sb{\otimes}\sp{n}\ar[r] & \NnC}
$$
\noindent is based on an $\aC$-coresolution in $\cA$
$$
A\ubu = (\xymatrix@1{A\sb{1}\ar[r]\sp{\delta\sb{1}} & 
A\sb{0}\ar[r]\sp{\delta\sb{0}} &
A\sb{-1}\ar[r]\sp{\delta\sb{-1}} & \dotsc})~,
$$
\noindent with \w[,]{A\sb{1} = Y} we say that $L$ is an
\emph{$n$-th order coresolution} of $X$ in \w[.]{\NnC}
\end{defn}

\begin{drthm}\label{tdresol}
Given an $\aC$-coresolution \w{A\ubu} of $Y$ in $\cA$, there is an $n$-th 
order coresolution $L$ of $Y$ based on \w{A\ubu} in \w{\NnC} \wb[.]{n \geq 1} 
\end{drthm}

This will follow from Theorem \ref{thdrt} below.

\begin{remark}
In view of Lemma~3.6~(a) in~\cite{BJSe}, a first-order resolution
in \w{\Nul{1}\cC} is a secondary resolution in the sense of~\cite{BJSe}.
\end{remark}

%
%
\sect{Left cubical balls}
\label{clcb}

The higher order resolutions of Theorems \ref{tresol} and \ref{tdresol} are built
of left cubes in a mapping algebra $\cC$. The main goal of this paper is to
elucidate those properties of left cubes which are actually needed for the
construction of these resolutions, which are used in turn to calculate the
differentials in the Adams spectral sequence.

The relevant properties will be described using the new notion of a
\emph{left cubical ball}, which we introduce in this section. Such balls serve
as a book-keeping device to describe the combinatorics of higher tracks, and
allow us to define certain associated obstructions in homotopy groups of
mapping spaces in $\cC$ in the next section.

\begin{defn}\label{dball}
A \emph{ball} of dimension $n$ is a finite regular CW-complex $B$ with a
subcomplex \w{\partial B} and a homeomorphism of pairs
$$
(E\sp{n}, S\sp{n -1}) \approx (B, \partial B)
$$
\noindent where \w{E\sp{n}} is the Euclidean ball. We do not assume that
$B$ has only one $n$-cell. Two balls $B$ and \w{B'} are \emph{equivalent}
if there is a cellular isomorphism \w[.]{B \approx B'} A ball $B$ is a union
$$
B~=~B\sb{1} \cup \dotsc \cup B\sb{k}
$$
\noindent of closed $n$-cells \w{B\sb{i}} in $B$. We say that $A$ is a
\emph{sub-ball} of $B$ if \w{A = B\sb{i\sb{1}} \cup \dotsc \cup B\sb{i\sb{t}}} for
\w{1 \leq i\sb{1} < \dotsc < i\sb{t} \leq k} is a ball and if for \w[,]{t < k}
the closure of the complement \w{B\setminus A} in $B$ is also a ball, denoted by
\w[,]{A\sb{B}} so that \w[.]{B = A \cup A\sb{B}} We also require that
\w{S:=A\cap A\sb{B}} be a ball of dimension \w[,]{n-1} contained in both
\w{\partial A} and \w[,]{\partial A\sb{B}} with the closures of
\w{\partial A\setminus S} in \w{\partial A} and of
\w{\partial A\sb{B}\setminus S} in \w{\partial A\sb{B}}
likewise balls of dimension \w[.]{n-1}

If $A$ is also a sub-ball of a ball $C$ with
\w[,]{S = A \cap A\sb{B} = A \cap A\sb{C}} then we obtain the
\emph{union of complements}
$$
A\sb{B}\cup A\sb{C}~=~A\sb{B}\cup\sb{s}A\sb{C}~,
$$
\noindent which is also a ball of dimension $n$, being the union of two
$n$-balls along \wwb{n-1}balls $S$ in their boundaries (identified with
opposite orientations on $S$).  See Figure \ref{funion}.

%
%
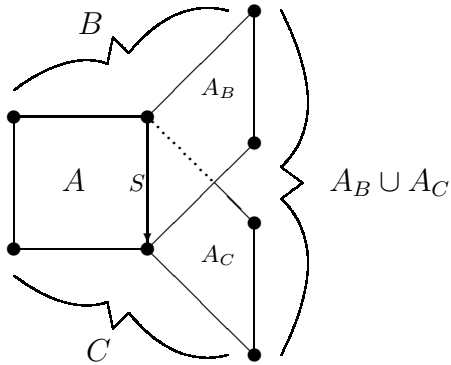
\begin{figure}[htbp]
\begin{picture}(100,140)(0,0)
%
%
\put(0,50){\circle*{5}}
\put(0,50){\line(1,0){50}}
\put(0,50){\line(0,1){50}}
\put(0,100){\circle*{5}}
\put(50,100){\vector(0,-1){48}}
\put(43,72){{\scriptsize $S$}}
\put(50,50){\circle*{5}}
\put(18,72){$A$}
\put(50,50){\line(1,1){40}}
\put(90,90){\circle*{5}}
\put(90,90){\line(0,1){50}}
\put(0,100){\line(1,0){50}}
\put(50,100){\circle*{5}}
\put(50,100){\line(1,1){40}}
\put(90,140){\circle*{5}}
\put(70,108){{\scriptsize $A\sb{B}$}}
\bezier{400}(0,110)(20,125)(35,120)
\bezier{40}(35,120)(36,125)(37,130)
\bezier{40}(37,130)(40,127)(43,124)
\bezier{400}(43,124)(60,145)(80,140)
\put(24,131){$B$}
\bezier{400}(0,40)(20,25)(35,30)
\bezier{40}(35,30)(36,25)(37,20)
\bezier{40}(37,20)(40,23)(43,26)
\bezier{400}(43,26)(60,5)(80,10)
\put(27,8){$C$}
\multiput(50,100)(2,-2){13}{\circle*{0.7}}
\put(75,75){\line(1,-1){15}}
\put(50,50){\line(1,-1){40}}
\put(90,10){\circle*{5}}
\put(90,10){\line(0,1){50}}
\put(90,60){\circle*{5}}
\put(70,45){{\scriptsize $A\sb{C}$}}
\bezier{400}(100,140)(120,100)(100,81)
\bezier{40}(108,75)(104,72)(100,69)
\bezier{40}(108,75)(104,78)(100,81)
\bezier{400}(100,10)(120,50)(100,69)
\put(118,72){$A\sb{B}\cup A\sb{C}$}
\end{picture}

\caption[funion]{Union of complements}
\label{funion}
\end{figure}
\end{defn}

\begin{example}\label{egtball}
Let \w{T\sb{0}\sp{n}} be the union of all cells
\w{I\sp{i - 1} \times\{0\}\times I\sp{n+1-i}}
in \w[,]{I\sp{n + 1}} and let \w{T\sb{1}\sp{n}} be the union of all cells
\w{I\sp{i - 1} \times \{1\} \times I\sp{n + i-1}} \wb[.]{i = 1, \dotsc, n + 1}
Thus \w{T\sb{\vare}\sp{n}} is the corner of \w{I\sp{n+1}} consisting of all
$n$-facets touching the vertex \w{(\vare,\vare,\dotsc,\vare)} \wb[,]{\vare=0,1}
and \w{T\sb{0}\sp{n}} and \w{T\sb{1}\sp{n}} are balls of dimension $n$, with
\w{n + 1} closed $n$-cells.
\end{example}

\begin{defn}\label{dlcb}
A \emph{left cubical ball} is a ball $B$ with a $0$-vertex
\w{\omi\in B - \partial B} equipped with the following:

\begin{enumerate}
\renewcommand{\labelenumi}{(\arabic{enumi})~}
\item  Each closed $n$-cell \w{B\sb{i}} has a homeomorphism
\w[.]{h\sb{i}:I\sp{n}\to B\sb{i}}
\item Each closed \wwb{n - 1}-cell $\ove$ has a homeomorphism
\w[.]{h\sb{e}:I\sp{n-1}\to\ove} such that if
\w[,]{\ove \subset B\sb{i} \cap B\sb{j}} the diagram
$$
\xymatrix{
B\sb{j} && ~\ove~\ar@{^{(}->}[rr] \ar@{_{(}->}[ll] && B\sb{i}\\
I\sp{n}\ar[u]\sp{h\sb{j}}_{\cong} &&
~I\sp{n-1}~ \ar@{_{(}->}[ll]\sp{d\sb{e, j}} \ar@{^{(}->}[rr]\sb{d\sb{e, i}}
\ar[u]_{\cong}^{h\sb{e}} && I\sp{n}\ar[u]^{\cong}\sb{h\sb{i}}
}
$$
\noindent commutes.
\end{enumerate}

Here \w{d\sb{e, j}} and \w{d\sb{e, i}} are morphisms in the
left cubical category $\LBox$. The vertex $\omi$ is also a vertex of each
\w{B\sb{i}} and the equivalence \w{h\sb{i}:I\sp{n} \approx B\sp{i}} carries
$\omi$ to $\omi$. Moreover, the union
$$
h\sb{1}(T\sb{1}\sp{n- 1}) \cup \dotsc \cup h\sb{k}(T\sb{1}\sp{n - 1}) =
\partial B
$$
\noindent is the boundary of $B$.
\end{defn}

\begin{example}
The $n$-ball \w{T\sb{0}\sp{n}} of Example \ref{egtball} is a left cubical
ball, as is the pushout of \w[,]{I\sp{n} \lla T\sb{0}\sp{n - 1} \lra I\sp{n}}
called the \emph{double} of \w[.]{I\sp{n}}
\end{example}

Further examples of left cubical balls of dimension $2$ appear in Figure \ref{fig1}
above and in Figure \ref{fig2}.

%
%
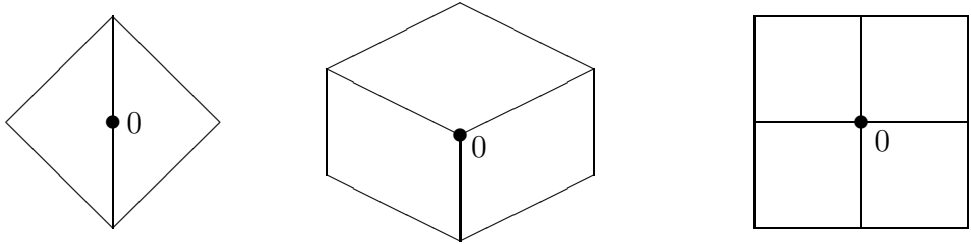
\begin{figure}[htbp]
\begin{picture}(370,100)(0,0)
%
%
\put(0,50){\line(1,1){40}}
\put(0,50){\line(1,-1){40}}
\put(40,90){\line(1,-1){40}}
\put(40,90){\line(0,-1){80}}
\put(40,10){\line(1,1){40}}
\put(40,50){\circle*{5}}
\put(45,46){$\omi$}
%
%
\put(120,70){\line(2,1){50}}
\put(120,70){\line(2,-1){50}}
\put(120,70){\line(0,-1){40}}
\put(170,45){\circle*{5}}
\put(174,37){$\omi$}
\put(170,45){\line(2,1){50}}
\put(170,45){\line(0,-1){40}}
\put(170,95){\line(2,-1){50}}
\put(220,70){\line(0,-1){40}}
\put(120,30){\line(2,-1){50}}
\put(170,5){\line(2,1){50}}
%
%
\put(280,10){\line(0,1){80}}
\put(320,10){\line(0,1){80}}
\put(360,10){\line(0,1){80}}
\put(280,10){\line(1,0){80}}
\put(280,50){\line(1,0){80}}
\put(280,90){\line(1,0){80}}
\put(320,50){\circle*{5}}
\put(325,39){$\omi$}
\end{picture}

\caption[fig2]{Some left cubical balls of dimension $2$}
\label{fig2}
\end{figure}

\begin{lemma}
Let $A$ be a sub-ball of $B$ and $C$, where $B$ and $C$ are left cubical; 
then the union of complements \w{A\sb{B} \cup A\sb{C}} is left cubical.
\end{lemma}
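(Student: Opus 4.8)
I would prove the lemma by analysing the local structure of the union of complements at each of its cells, and then verifying the two defining properties of a left cubical ball (Definition \ref{dlcb}): that each top-dimensional cell is equivalent to \w{I^n} compatibly with the face inclusions in $\LBox$, and that the distinguished vertex $0$ together with the images of the top tracks \w{T_1^{n-1}} assemble to the boundary. The key point is that all the data are \emph{local}: the $n$-cells of \w{A_B\cup_s A_C} are precisely the $n$-cells of \w{A_B} together with those of \w{A_C}, and each such cell already comes equipped with a chart \w{h_i:I^n\approx B_i} inherited from $B$ (respectively $C$), because \w{A_B} (resp. \w{A_C}) is a subcomplex of the left cubical ball $B$ (resp. $C$). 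So the charts and the compatibility diagrams with \w{d_{e,i},d_{e,j}} for faces \w{\overline e\subset B_i\cap B_j} are automatic whenever both cells lie in \w{A_B}, or both in \w{A_C}.

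First I would record that \w{A_B\cup_s A_C} is indeed a ball of dimension $n$ with boundary sphere — this is the content of the sentence in the section on left cubical balls asserting that the union of complements ``is also a ball'', which I may invoke. Next I would fix the distinguished vertex: the vertex $0$ of $B$ and the vertex $0$ of $C$ both lie on $A$ (indeed on \w{S=A\cap A_B=A\cap A_C}), and I would argue they must coincide as the unique interior $0$-vertex of $A$ lying off \w{\partial A}; call it $0$. Since $0$ is an interior vertex of $B$ lying in \w{A_B} only along \w{S\subset\partial A_B}, and similarly for \w{A_C}, one checks $0$ is an interior vertex of \w{A_B\cup_s A_C} and a vertex of every top cell, with each chart \w{h_i} carrying $0\mapsto 0$ — inherited directly from $B$ or $C$.

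The one genuinely new thing to check is the cells lying \emph{along the glueing locus} $S$: a face \w{\overline e\subset B_i\cap B_j} with \w{B_i\subset A_B} and \w{B_j\subset A_C} is an \wwb{n-1}-cell of $S=A\cap A_B$. Here I would use that $A$ is itself a left cubical ball (it is a sub-ball of the left cubical ball $B$, hence left cubical — this should be remarked as the immediate companion fact, or extracted from the subcomplex structure): thus \w{\overline e} carries an equivalence to \w{I^{n-1}} and the inclusions into the $n$-cells of $A$ on either side are morphisms \w{d_{e,\cdot}} of $\LBox$; the charts \w{h_i} on \w{A_B} and \w{h_j} on \w{A_C} each restrict on \w{\overline e} to charts compatible with those of $A$, so after composing with the transition isomorphism of \w{\overline e} the square in Definition \ref{dlcb} commutes. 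The main obstacle is precisely this cross-term compatibility: one must be sure that the chart of \w{\overline e} used from the $B$-side and from the $C$-side can be made to agree, which is where the hypothesis that \emph{$A$ is a common sub-ball} (rather than two unrelated pieces) is essential — the shared identification \w{S=A\cap A_B=A\cap A_C} forces a single coherent left cubical structure on the glueing sphere.

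Finally I would verify the boundary condition \w{h_1(T_1^{n-1})\cup\dots\cup h_k(T_1^{n-1})=\partial(A_B\cup_s A_C)}. The boundary of the union of complements is \w{(\partial A_B\cup\partial A_C)\setminus(\text{interior of }S)}, i.e. it is built from \w{\partial A_B} and \w{\partial A_C} by deleting the shared copy of $S$ and glueing along \w{\partial S}. For each top cell \w{B_i} of \w{A_B}, its contribution \w{h_i(T_1^{n-1})} to \w{\partial A_B} is, by definition of $B$ being left cubical, exactly the part of \w{\partial B_i} lying in \w{\partial A_B}; the faces of \w{B_i} lying in \w{S} are among the \w{h_i(d_{0}^{\cdot})}-faces, hence lie in \w{h_i(T_0^{n-1})} (the $0$-track), and are therefore automatically \emph{not} in \w{h_i(T_1^{n-1})} — so deleting \w{S} does not disturb the tracks. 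A short cell-count then gives the claimed equality. I do not expect any difficulty here beyond carefully matching the combinatorics of \w{T_0^{n-1}} versus \w{T_1^{n-1}} under the glueing, which is bookkeeping of the kind the section explicitly advertises.
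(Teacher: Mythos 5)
Your overall strategy --- check the conditions of Definition~\ref{dlcb} cell by cell, observing that everything away from the gluing locus $S$ is inherited verbatim from $B$ or $C$, and then handle the vertex and the faces lying in $S$ --- is the natural one (the paper states this lemma without proof). But there is a definite error at the point where you identify the distinguished vertex and where you invoke that ``$A$ is itself a left cubical ball.'' Both claims are false. Since $0$ is a vertex of \emph{every} $n$-cell of $B$, and $A$ and $A_B$ are each unions of $n$-cells with $t<k$, the vertex $0$ is a vertex of cells in $A$ and of cells in $A_B$ simultaneously; hence $0\in A\cap A_B=S\subset\partial A$. So $0$ lies \emph{on} $\partial A$, not ``off $\partial A$'' as you assert, and consequently $A$ cannot be a left cubical ball with $0$ as its distinguished vertex; in general $A$ need not admit any interior vertex at all (take $B=T_{0}^{2}$ and $A$ the union of two of its three squares: every vertex of $A$ is a boundary vertex). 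What actually pins down the structure is the hypothesis, implicit in ``$A$ is a sub-ball of $B$ \emph{and} $C$,'' that the cells of $A$ carry one and the same system of charts $h_{m}:I^{n}\approx A_{m}$ whether read inside $B$ or inside $C$. That identification gives $0_{B}=h_{m}(0)=0_{C}$ directly, and it gives the coherence of the charts on the $(n-1)$-cells $\overline e\subset S$, because each such $\overline e$ is shared between a cell $A_{m}\subset A$ and a cell of $A_{B}$ (resp.\ of $A_{C}$), and the commuting square of Definition~\ref{dlcb}, applied inside $B$ and inside $C$ separately, forces both sides to induce the same chart on $\overline e$. No appeal to $A$ being left cubical is needed or available.

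The remaining steps of your proposal are sound: $0$ is interior to $D=A_{B}\cup_{S}A_{C}$ because $0\in S\setminus\partial S$ (as $0\notin\partial B$ and $\partial S\subset\partial B$) and the interior of $S$ becomes interior in $D$; the charts on the $n$-cells of $D$ and the fact that each chart carries $0\mapsto 0$ are inherited; and your treatment of the boundary condition is correct --- the faces of a cell $B_{i}\subset A_{B}$ that lie in $S$ are interior to $B$, hence are $T_{0}$-faces of $B_{i}$, so $h_{i}(T_{1}^{n-1})\subset\partial B\cap A_{B}\subset\partial D$, and summing over all top cells of $D$ recovers $\partial D=(\partial B\cap A_{B})\cup(\partial C\cap A_{C})$.
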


\begin{remark}
Let $B$ be a left cubical ball of dimension $n$ with $k$ closed $n$-cells.
Then $B$ is equivalent to the double of \w{I\sp{n}} for \w{k = 2} and $B$ is
equivalent to \w{T\sb{0}\sp{n}} for \w[.]{k = n + 1} For \w[,]{2 < k < n + 1} such
a ball does not exist. For \w[,]{k \geq n + 1} there is a one-to-one
correspondence between left cubical balls (up to equivalence) and simplicial 
complexes homeomorphic to the \wwb{n - 1}sphere \w[.]{S\sp{n - 1}} The
correspondence carries $B$ to the boundary of a small neighborhood of $0$
in $B$, where the link of the corner of an $n$-cube forms an \ww{n-1}-simplex.
Thus in Figure \ref{fig1}, the eight $2$-cubes yield an octagon (a
triangulation of the $1$-sphere).
\end{remark}

%
%
\sect{Obstructions}
\label{cob}

We now describe, for a general space $X$, the type of obstructions derived 
from cubical balls, which we shall later need in the case when $X$ is a 
mapping space.

\begin{defn}
Let $X$ be a pointed space with base point \w[,]{o\in X} let $B$ be a ball, and
let \w{a:B \lra X} be a map with \w[.]{a(\partial B) = o} We obtain the map
\begin{myeq}\label{eqobstr}
a:S\sp{n}~\approx~E\sp{n}/S\sp{n - 1}~\approx~B/\partial B~\longrightarrow~X~,
\end{myeq}
which represents an element \w{\OO(a) \in \pi\sb{n}(X)} in the $n$-th homotopy
group of $X$. Now let \w{B = B\sb{1} \cup \dotsc \cup B\sb{k}} be a left 
cubical ball. Then
$$
I\sp{n}~\xra{h\sb{i}}~B\sb{i}~\subset~B~\xra{a}~X
$$
\noindent is a left $n$-cube representing an $n$-track
\w[.]{a\sb{i} \in \Nul{n}(X)\sb{n}}

Then for \w{\overline e \subset B\sb{i} \cap B\sb{j}} we have the
\emph{gluing condition in $B$} (see Definition \ref{dlcb}).
\begin{myeq}\label{egc}
d\sb{e, i}\sp{\ast} a\sb{i} = d\sb{e, j}\sp{\ast} a\sb{j}.
\end{myeq}

A $k$-tuple \w{(a\sb{1}, \dotsc, a\sb{k})} of $n$-tracks \w{a\sb{i}} in
\w{\Nul{n}(X)\sb{n}} satisfying \wref{egc} is called a
\emph{left $n$-cubical ball}
in $X$.
\end{defn}

\begin{lemma}
A left $n$-cubical ball \w{(a\sb{1}, \dotsc, a\sb{k})} in $X$
yields a map \w{a:B \to X} with \w[,]{a(\partial B) = o}
well-defined up to homotopy relative to the boundary.
This defines the \emph{obstruction} 
\w{\OO\sb{B}(a\sb{1}, \dotsc, a\sb{k}) = \OO(a)} in
\w{\pi\sb{n}(X)} as above.\hfill$\Box$
\end{lemma}

Now let \w{B = T\sb{0}\sp{n} = B\sb{1} \cup \dotsc \cup B\sb{n + 1}} be the 
ball of \S \ref{egtball}, and let 
\w{a\sb{1}, \dotsc, a\sb{n + 1} \in\Nul{n}(X)\sb{n}} be
$n$-tracks satisfying the gluing condition \wref[.]{egc} Then we get
the \emph{boundary property}:

\begin{lemma}\label{lem:boundaryProperty}
\w{\OO\sb{T\sb{0}\sp{n}}(a\sb{1}, \dotsc, a\sb{n + 1}) = 0} if and only if 
there exist \w{\overline a \in \Nul{n + 1}(X)\sb{n + 1}} with 
\w{\partial\sp{i} \overline a} representing \w[.]{a\sb{i}}
\end{lemma}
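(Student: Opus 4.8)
The plan is to identify $T_{0}^{n}$ with the union of the ``lower'' faces $d_{0}^{i}(I^{n+1})$ of the $(n+1)$-cube and to compare it with the union $T_{1}^{n}$ of the ``upper'' faces $d_{1}^{i}(I^{n+1})$, $1\leq i\leq n+1$. The relevant observations are that $\partial I^{n+1}=T_{0}^{n}\cup T_{1}^{n}$ with $T_{0}^{n}\cap T_{1}^{n}=\partial T_{0}^{n}=\partial T_{1}^{n}$, that the straight-line homotopy towards the vertex $(1,\dotsc,1)$ retracts $T_{1}^{n}$ to a point, and that consequently $I^{n+1}/T_{1}^{n}$ is contractible and the collapse $\partial I^{n+1}\to\partial I^{n+1}/T_{1}^{n}=T_{0}^{n}/\partial T_{0}^{n}$ is a (based) homotopy equivalence. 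Under the left cubical ball structure of $T_{0}^{n}$ the cells $B_{i}$ are the faces $d_{0}^{i}(I^{n+1})$, the homeomorphisms $h_{i}$ are the corresponding cube inclusions, and $\partial^{i}$ on $\Nul{n+1}(X)$ restricts a track on $I^{n+1}$ to the cell $B_{i}$; I would begin by spelling this out so that ``$\partial^{i}\overline a$ represents $a_{i}$'' becomes literally ``$\overline a|_{B_{i}}$ is a representative of the $n$-track $a_{i}$''.

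For the ``if'' direction, given $\overline a\in\Nul{n+1}(X)_{n+1}$ with $\partial^{i}\overline a$ representing $a_{i}$, I would pick a representing map $\overline a:I^{n+1}\to X$ (so $\overline a|_{T_{1}^{n}}=o$) and take $a:=\overline a|_{T_{0}^{n}}$ as the map built from the tracks $a_{i}$ in the construction of the obstruction $\OO_{T_{0}^{n}}$ above: the restrictions $\overline a|_{B_{i}}$ agree on the overlaps $B_{i}\cap B_{j}$ and represent the $a_{i}$ by hypothesis, so this is a legitimate choice, and the obstruction does not depend on it. Since $\overline a$ kills $T_{1}^{n}$, the map $T_{0}^{n}/\partial T_{0}^{n}\to X$ defining $\OO(a)$ factors as $T_{0}^{n}/\partial T_{0}^{n}\hra I^{n+1}/T_{1}^{n}\xra{\overline a}X$ through the contractible space $I^{n+1}/T_{1}^{n}$, hence is nullhomotopic; so $\OO_{T_{0}^{n}}(a_{1},\dotsc,a_{n+1})=\OO(a)=0$.

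For the ``only if'' direction, I would run the construction of $\OO_{T_{0}^{n}}$ in reverse: starting from chosen representatives of the $a_{i}$ and gluing them along the gluing conditions \eqref{egc} as in that construction, I obtain $a:T_{0}^{n}\to X$ with $a(\partial T_{0}^{n})=o$ and $\OO(a)=\OO_{T_{0}^{n}}(a_{1},\dotsc,a_{n+1})=0$. Extending $a$ by the zero map on $T_{1}^{n}$ yields $b:\partial I^{n+1}\to X$, which factors through $\partial I^{n+1}/T_{1}^{n}=T_{0}^{n}/\partial T_{0}^{n}$ as the defining map of $\OO(a)$ precomposed with the collapse; since $\OO(a)=0$ this forces the class of $b$ in $\pi_{n}(X)$ to vanish, so $b$ extends over $I^{n+1}$ to a map $\overline a$. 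Then $\overline a|_{T_{1}^{n}}=o$ gives $\overline a\in\nul{}(X)_{n+1}$, hence a class in $\Nul{n+1}(X)_{n+1}$, and $\partial^{i}\overline a=\overline a|_{B_{i}}=a|_{B_{i}}$ represents $a_{i}$, as required.

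The one genuinely delicate point I expect is the bookkeeping of the first step: matching the abstract left cubical ball data of $T_{0}^{n}$ (the cells $B_{i}$, the homeomorphisms $h_{i}$, the face inclusions $d_{e,i}$ of Definition \ref{dlcb}, and the meaning of ``represents'') with the concrete faces of $I^{n+1}$ and their pairwise intersections, and carrying compatible basepoints through the quotients $\partial I^{n+1}\approx S^{n}$, $T_{0}^{n}/\partial T_{0}^{n}$ and $I^{n+1}/T_{1}^{n}$. Once that is in place, each direction is the short factorization argument above, using only that $I^{n+1}/T_{1}^{n}$ is contractible and that a map $S^{n}\to X$ extends over $E^{n+1}$ precisely when its class in $\pi_{n}(X)$ vanishes.
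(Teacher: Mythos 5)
Your proposal is correct and takes the same approach as the paper: choose representatives of the $a_i$, glue them over $T_0^n$, extend by the zero map on $T_1^n$ to get a map on $\partial I^{n+1}$, and observe that this extends over $I^{n+1}$ precisely when the obstruction class vanishes. The paper states this in two lines and leaves implicit what you spell out (that the collapse $\partial I^{n+1}\to\partial I^{n+1}/T_1^n\cong T_0^n/\partial T_0^n$ is a based homotopy equivalence, via contractibility of $T_1^n$ and of $I^{n+1}/T_1^n$), so your version is the same argument with the bookkeeping made explicit.
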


\begin{proof}
We choose representatives \w{a'\sb{i}} of \w{a\sb{i}} which define a map
$$
\xymatrix@1{\overline{\overline a}:\partial I\sp{n + 1}\ar[r] & X}
$$
\noindent with \w{\overline{\overline a}(T\sb{1}\sp{n}) = 0} and
\w[.]{\overline{\overline a}|\sb{T\sb{0}\sp{n}} = 
a'\sb{1}\cup\dotsc\cup a'\sb{n + 1}}
Here \w{\overline{\overline a}} extends to \w{I\sp{n + 1}} if and only if
\w[.] {\OO(a'\sb{1} \cup \dotsc \cup a'\sb{n + 1}) = 0}
\end{proof}

The next result is the \emph{Complement Rule}.

\begin{lemma}\label{lcompl}
Let \w{B = A\sb{1} \cup \dotsc \cup A\sb{r}\cup B\sb{1}\cup\dotsc\cup B\sb{t}} 
and \w{C = A\sb{1} \cup \dotsc \cup A\sb{r}\cup C\sb{1}\cup\dotsc\cup C\sb{s}} 
be left cubical balls with a common sub-ball 
\w[.]{A = A\sb{1}\cup\dotsc\cup A\sb{r}} Then
$$
\OO\sb{C}(a\sb{1}, \dotsc, a\sb{r}, c\sb{1},\dotsc c\sb{s}) = 0
$$
\noindent implies that for \w{D = A\sb{B} \cup A\sb{C}} as in \S \ref{dball} 
we have:
$$
\hspace*{28mm}\OO\sb{B}(a\sb{1}, \dotsc, a\sb{r}, b\sb{1}, \dotsc, b\sb{t}) =
\OO_D(b\sb{1}, \dotsc, b\sb{t}, c\sb{1}, \dotsc, c\sb{s}).
$$
\end{lemma}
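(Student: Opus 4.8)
The plan is to realize all three obstructions simultaneously by a single map on a common ambient complex and then read off the relation by comparing restrictions. First I would observe that, since $A = A_1\cup\dots\cup A_r$ is a sub-ball of both $B$ and $C$, and $D = A_B\cup A_C$ is the union of complements along $S = A\cap A_B = A\cap A_C$, the three balls $B$, $C$, $D$ share the common ``rim'' data: $B = A\cup A_B$, $C = A\cup A_C$, and the gluing loci are all identified along $S$. So I would build the ball $\Sigma$-shaped configuration $B\cup_S C$ — more precisely the pushout of $A_B \hookleftarrow S \hookrightarrow A_C$ sitting inside the pushout of $B \hookleftarrow A \hookrightarrow C$ — and note that $D$ is exactly the ``outer'' sub-ball of this configuration, with $A$ playing the role of the inner wall one crosses to pass from $B$ to $D$. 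The key combinatorial point, which I would verify by hand, is that $B$ and $D$ are complementary sub-balls inside $B\cup_S C$ with common boundary piece $A_B$ (equivalently: $A$ is a sub-ball of $B$ with complement $A_B$, and $A$ is a sub-ball of $C$ with complement $A_C$, so $A_B$ is a sub-ball of $B\cup_S C$ whose complement is $A_C$, while $A_C$'s complement is $A_B$, and $D = A_B\cup A_C = (B\cup_S C)$ minus the interior cells of $A$).

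Next I would use the gluing conditions \eqref{egc} to assemble, from the tracks $a_1,\dots,a_r$ (on the cells of $A$), $b_1,\dots,b_t$ (on the cells of $A_B$) and $c_1,\dots,c_s$ (on the cells of $A_C$), a single map $f : B\cup_S C \to X$ with $f$ vanishing on the appropriate boundary faces; this is legitimate because the compatibility needed to glue along $S$ is precisely what \eqref{egc} for the balls $C$ (and $B$) provides on the overlapping $(n-1)$-cells, and the Lemma just before this one guarantees that the choices made are well defined up to homotopy rel boundary. Restricting $f$ to $B$, to $C$, and to $D$ gives representatives of $\OO_B(a_1,\dots,a_r,b_1,\dots,b_t)$, $\OO_C(a_1,\dots,a_r,c_1,\dots,c_s)$ and $\OO_D(b_1,\dots,b_t,c_1,\dots,c_s)$ respectively. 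Now I would pass to homotopy groups: collapsing the boundary of each ball, the class $[f|_{B\cup_S C}] \in \pi_n(X)$ decomposes, via the cofiber sequence $B/\partial B \vee D/\partial D \simeq (B\cup_S C)/\partial$ (using that $B$ and $D$ meet along the collapsed wall $A_B$, hence their quotients wedge), as the sum $\OO_B(\dots) + \OO_D(\dots)$; symmetrically it decomposes along the wall $A$ as $\OO_A(a_1,\dots,a_r) + \OO_C(\dots)$ where $\OO_A$ is the obstruction of the inner ball. But the hypothesis $\OO_C(a_1,\dots,a_r,c_1,\dots,c_s) = 0$ lets me extend $f|_C$ over the cone $C \cup (\text{cone on }\partial)$, and in particular over $A$ — concretely, $\OO_A(a_1,\dots,a_r)$ must vanish too once we are inside a consistent global map and $C$ has been filled — so the second decomposition forces $[f|_{B\cup_S C}]$ to be carried entirely by the $B$-part up to the filled region, giving $[f|_{B\cup_S C}] = \OO_B(a_1,\dots,a_r,b_1,\dots,b_t)$. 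Comparing the two expressions yields $\OO_B(a_1,\dots,a_r,b_1,\dots,b_t) = \OO_D(b_1,\dots,b_t,c_1,\dots,c_s)$.

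The main obstacle I anticipate is the bookkeeping in the additivity-of-obstructions step: making precise the claim that $[f|_{B\cup_S C}]$ splits as $\OO_B + \OO_D$ under the identification $(B\cup_S C)/\partial \simeq B/\partial B \vee D/\partial D$, and simultaneously as $\OO_A + \OO_C$, requires care about which boundary faces are collapsed (the $h_i(T_1^{n-1})$ faces from Definition \ref{dlcb}) and a verification that the wedge decomposition of $\pi_n$ is compatible with both cell decompositions of $B\cup_S C$. Concretely I would argue this by choosing a common cellular structure on $B\cup_S C$ refining all of $A$, $A_B$, $A_C$, checking that each of $B$, $C$, $D$, $A$ is a genuine sub-ball in the sense defined earlier so that the ``collar'' complements behave correctly, and then invoking the standard fact that for a CW-pair where $X/\partial X$ splits as a wedge the induced map on $\pi_n$ is the corresponding sum. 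The rest — extending over $A$ using $\OO_C = 0$, and the homotopy-rel-boundary uniqueness of all the glued maps — is routine given the preceding lemma and the boundary property (Lemma \ref{lem:boundaryProperty}).
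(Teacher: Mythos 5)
Your overall strategy --- assemble a single map on the three-page ``book'' $Y := B\cup_{A} C = A\cup A_{B}\cup A_{C}$ vanishing on the outer boundary, then read off relations among the three obstructions by comparing restrictions --- is the right idea (and note the notation should be $B\cup_{A}C$, not $B\cup_{S}C$: gluing $B$ to $C$ along $S$ alone would produce a four-page book). However, the additivity step, which you correctly flag as the delicate point, has genuine gaps as written.

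There is no class ``$[f|_{B\cup_S C}]\in\pi_{n}(X)$'' to decompose: $Y$ is not a ball and $Y/\partial Y$ is not a sphere (it has the homotopy type of $S^{n}\vee S^{n}$), so $f$ on $Y$ carries no single obstruction. Relatedly, $\OO_{A}(a_{1},\dots,a_{r})$ is simply not defined: the $n$-tracks $a_{i}$ vanish only on the faces $h_{i}(T_{1}^{n-1})$, which for the cells of $A$ cover $S_{A}:=\overline{\partial A\setminus S}$ but not the internal wall $S$, so $f|_{A}$ does not kill $\partial A$; the statement that ``$\OO_{A}$ must vanish once $C$ has been filled'' is therefore about a nonexistent quantity. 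Finally, $B$ and $D$ are \emph{not} complementary in $Y$: they overlap in the full $n$-dimensional piece $A_{B}$ (and the closed complement of $A_{B}$ in $Y$ is $A\cup A_{C}=C$, not $A_{C}$). In particular the claim ``$(Y)/\partial\simeq B/\partial B\vee D/\partial D$, using that $B$ and $D$ meet along the collapsed wall $A_{B}$'' fails for exactly this reason --- $A_{B}$ is not collapsed, only $\partial Y=S_{A}\cup S_{B}\cup S_{C}$ is.

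Here is a way to repair the additivity step. The image of $A_{B}$ in $Y/\partial Y$ is the contractible pointed subcomplex $A_{B}/S_{B}$, so collapsing it gives a homotopy equivalence
\begin{equation*}
Y/\partial Y \;\xrightarrow{\ \simeq\ }\; Y/(\partial Y\cup A_{B}) \;=\; (A/\partial A)\vee (A_{C}/\partial A_{C}) \;\cong\; S^{n}\vee S^{n}.
\end{equation*}
Under this equivalence the inclusion $B/\partial B\hookrightarrow Y/\partial Y$ becomes, up to sign, the inclusion of the first wedge summand; $D/\partial D\hookrightarrow Y/\partial Y$ becomes the inclusion of the second; and $C/\partial C\hookrightarrow Y/\partial Y$ becomes the pinch map $S^{n}\to S^{n}\vee S^{n}$ collapsing the equator $S\subset C$. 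Applying $f_{*}$ therefore gives a \emph{three-term} relation $\OO_{C}=\epsilon\,\OO_{B}+\epsilon'\,\OO_{D}$ with $\epsilon,\epsilon'\in\{\pm1\}$ determined by the orientation conventions of the obstruction section, and the hypothesis $\OO_{C}=0$ yields $\OO_{B}=\pm\OO_{D}$. This is what replaces the ``simultaneous double decomposition'' of a global class, and it never refers to an obstruction for $A$ alone.
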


\begin{proof}
This follows from the Homotopy Addition Theorem (cf.\ \cite{MunkrS}) 
and our conventions on the union of complements (Definition \ref{dball}).
\end{proof}

\noindent Of course, there is the following \emph{Double Rule}:

\begin{lemma}
If \w{B = B\sb{1} \cup B\sb{2}} is the double of \w{I\sp{n}} then for
\w{a\sb{1} = a\sb{2}} we have:
$$
\hspace*{60mm}\OO\sb{B}(a\sb{1}, a\sb{2}) = 0~.\hspace*{60mm}\Box
$$
\end{lemma}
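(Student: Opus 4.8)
The plan is to reduce the Double Rule to the Boundary Property (Lemma \ref{lem:boundaryProperty}) together with a direct extension argument. First I would set up the map: since $B = B_1 \cup B_2$ is the double of $I^n$, we have $B_1 \cap B_2 = T_0^{n-1}$ (the common sub-ball along which the two copies of $I^n$ are glued), and the gluing condition \eqref{egc} along the faces of $T_0^{n-1}$ is automatically satisfied when $a_1 = a_2$, since the two cells carry the identical $n$-track. Choose a single representative $a' : I^n \to X$ of the common $n$-track $a_1 = a_2$, with $a'(T_1^{n-1}) = o$. Then the map $\overline{\overline{a}} : B \to X$ representing $\OO_B(a_1,a_2)$ can be taken to be $a'$ on $B_1$ and $a'$ on $B_2$, i.e.\ the two halves agree after the identification $h_1, h_2$.

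Next I would exhibit an explicit nullhomotopy. The double of $I^n$ is obtained by gluing two copies of $I^n$ along $T_0^{n-1}$, and there is a standard deformation retraction of this onto one copy $B_1 \cong I^n$: fold $B_2$ back onto $B_1$ via the identity map (under the identification that both copies are $I^n$ glued along $T_0^{n-1}$). Concretely, since $a'$ agrees on the two halves, the map $a : B \to X$ factors through the fold map $B \to I^n$ (collapsing $B_2$ onto $B_1$ by the identity), followed by $a' : I^n \to X$. Hence $\overline{a} : B/\partial B \to X$ factors through $(I^n)/\partial(I^n)$ — but wait, one must check that the fold map carries $\partial B$ into $\partial(I^n)$. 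Here $\partial B = h_1(T_1^{n-1}) \cup h_2(T_1^{n-1})$, and the fold map sends both of these to $T_1^{n-1} \subsetneq \partial I^n$; since $a'(T_1^{n-1}) = o$ this is consistent with $a(\partial B) = o$. But the quotient $I^n / \partial I^n = S^n$ receives the class only after collapsing all of $\partial I^n$, whereas the fold map only surjects onto $I^n$ with image of $\partial B$ equal to $T_1^{n-1}$. The key point is that the composite $B \to I^n \to I^n/T_1^{n-1}$ is not $S^n$ but a contractible space (indeed $I^n/T_1^{n-1} \simeq \ast$, since $T_1^{n-1}$ is contractible and the pair $(I^n, T_1^{n-1})$ is an NDR-pair with $I^n$ contractible), so $\overline{a}$ is nullhomotopic and $\OO_B(a_1,a_2) = \OO(a) = 0$.

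Alternatively — and this is probably the cleanest route for the paper — I would invoke the Complement Rule together with the Boundary Property directly. Take $A = B_1$ as a sub-ball of $B = B_1 \cup B_2$ (the double), so $A_B = B_2$; and take $A = B_1$ also as a sub-ball of $C := T_0^n$ with one fixed cell labelled $B_1$, so that $A_C = T_0^n - B_1$, which is the double $B$'s complement configuration. Then apply the Boundary Property: $\OO_{T_0^n}(a_1, \ldots, a_1) $ with all $n+1$ faces equal to (restrictions of) $a_1$ is zero precisely because the constant-in-the-last-coordinate track $\overline{a} \in \Nul{n+1}(X)_{n+1}$ (represented by $a'_1 \circ \mathrm{pr}$ where $\mathrm{pr}: I^{n+1} \to I^n$ is a suitable projection) has all its $\partial^i \overline{a}$ representing $a_1$. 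Feeding this through the Complement Rule with $D = A_B \cup A_C = B_2 \cup (T_0^n - B_1)$ then forces $\OO_B(a_1, a_2) = \OO_D(\ldots)$, and a symmetry/induction on the number of cells reduces $\OO_D$ to the $k=2$ case again, collapsing to $0$.

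The main obstacle I anticipate is bookkeeping: making the fold/collapse map compatible with the left-cubical structure and with the marked vertex $0$, and verifying that the face identifications $d_{e,i}$ used in the gluing condition \eqref{egc} are precisely the ones that make the two copies of $a'$ on $B_1$ and $B_2$ glue to a genuine map on $B$ rather than only on a quotient. The homotopy-theoretic content is trivial once the map is constructed — $I^n$ is contractible rel any contractible subcomplex — so essentially all the work is in the combinatorics of the double, which is exactly the sort of thing the left-cubical-ball formalism (Definition \ref{dlcb} and the preceding lemmas) was set up to handle. I expect the actual proof in the paper to be one or two lines, simply citing the Boundary Property with the degenerate $(n+1)$-track.
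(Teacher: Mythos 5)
The paper states the Double Rule without a written proof, so there is no in-text argument to compare against; I'm assessing your reasoning on its own merits.

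Your first argument, via the fold map, is correct and almost certainly the intended one. You identify the key points: because $a_1=a_2=a$ and both gluing maps $d_{e,1}=d_{e,2}$ are the same inclusion $d_0^i$ of a face of $T_0^{n-1}$ into $I^n$, the glued map $\overline a\colon B\to X$ is well defined and factors as $a'\circ F$, where $F\colon B\to I^n$ is the fold of $B=I^n\cup_{T_0^{n-1}}I^n$ onto a single copy; $F(\partial B)=T_1^{n-1}$; and $a'(T_1^{n-1})=o$ since $a'\in\nul{}(X)_n$. Hence the representing map $B/\partial B\to X$ factors through $I^n/T_1^{n-1}$, which is contractible (a contractible CW complex modulo a contractible subcomplex), so $\OO_B(a,a)=0$. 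You flag and correct your own mis-step about factoring through $I^n/\partial I^n$, which is the point most likely to trip someone up. One cosmetic quibble: the phrase ``standard deformation retraction of the double onto $B_1$'' conflates the fold map (a retraction that is not a deformation retraction) with the genuine deformation retraction of $B$ onto the sub-ball $B_1$; but your argument only uses the fold, so no harm done.

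Your alternative route through the Boundary Property and Complement Rule does not work as written, for two concrete reasons. First, for $n\geq 2$ the $(n+1)$-tuple $(a_1,\dotsc,a_1)$ does \emph{not} in general satisfy the gluing condition \eqref{egc} in $T_0^n$: the shared $(n-1)$-face of two distinct cells $B_i, B_j$ of $T_0^n$ corresponds to \emph{different} face operators $d_{e,i}\neq d_{e,j}$ (for example, in $T_0^2$ the condition between cells $1$ and $3$ reads $\partial^2 a_1 = \partial^1 a_3$), so $\OO_{T_0^n}(a_1,\dotsc,a_1)$ is simply not defined for a generic $a_1$. Second, the degenerate $(n+1)$-cube $a'_1\circ\mathrm{pr}$ has exactly \emph{one} face operator $\partial^i$ returning $a_1$; the remaining $n$ faces are degenerate cubes of the form $(s^j)^*(\partial^i a'_1)$, not representatives of $a_1$. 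So Lemma~\ref{lem:boundaryProperty} does not produce $\OO_{T_0^n}(a_1,\dotsc,a_1)=0$ in the way you propose, and the Complement Rule reduction never gets off the ground. Note also that for $n\geq 2$ the double (two cells) is not equivalent to $T_0^n$ ($n+1$ cells), so even conceptually the Boundary Property, which is stated for $T_0^n$, does not directly govern the double. Stick with the fold argument.
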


\begin{defn}
Let \w{B = B\sb{1} \cup \dotsc \cup B\sb{k}} be a left cubical ball. Then for each
\w{1 \leq i \leq k} we have a map
$$
\overline \vare\sb{i}:I\sp{n} \approx B\sb{i} \subset B \approx E\sp{n}~,
$$
\noindent where \w{I\sp{n}} and \w{E\sp{n}} are oriented by the inclusions
of \w{I\sp{n}} and \w{E\sp{n}} in \w[.]{\real\sp{n}} We set \w{\vare\sb{i} = +1}
if the map \w{\overline \vare\sb{i}} is orientation preserving, otherwise
\w[.]{\vare\sb{i} = -1} We call \w{\vare\sb{i}} the \emph{orientation sign}
of \w[.]{B\sb{i}}

Let \w{B = B\sb{1} \cup B\sb{2}} be the double of \w[.]{I\sp{n}} Then
\w[,]{\vare\sb{1} = -\vare\sb{2}} and we can choose \w{B\sb{1}} so that
\w[,]{\vare\sb{1} = 1} In this case we define the \emph{action} $+$ of
\w{\alpha \in \pi\sb{n}(X)} on an $n$-track \w{a \in \Nul{n}(X)\sb{n}} 
\wb{n \geq 1} by letting \w{a + \alpha} denote the unique $n$-track $b$ such that
\w[.]{\OO(b, a) = \alpha} This can of course be described explicitly in
terms of the coaction of the $n$-sphere on the $n$-ball.
\end{defn}

\begin{lemma}
The action $+$ yields a well defined effective and transitive action of the
group \w{\pi\sb{n}(X)} on the set of all $n$-tracks \w{a \in \Nul{n}(X)\sb{n}}
which coincide on the boundary (that is, \w[,]{\partial\sp{i} a = b\sb{i}} where
\w{(b\sb{1}, \dotsc, b\sb{n})} is fixed).\hfill $\Box$
\end{lemma}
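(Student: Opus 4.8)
The plan is to identify the set in the statement — the set $S$ of $n$-tracks $a$ with $\partial^{i}a=b_{i}$ for $1\leq i\leq n$, where the tuple $(b_{1},\dots,b_{n})$ is fixed — with a torsor over $\pi_{n}(X)$, the torsor structure being the one encoded by the obstruction over the double of $I^{n}$. If $S=\emptyset$ there is nothing to prove, so assume $S\neq\emptyset$. Everything then follows from three claims: (i) for each $a\in S$ and $\alpha\in\pi_{n}(X)$ there is a $c\in S$ with $\OO(c,a)=\alpha$; (ii) such a $c$ is unique in $S$; (iii) $\OO(a_{3},a_{1})=\OO(a_{3},a_{2})+\OO(a_{2},a_{1})$ for all $a_{1},a_{2},a_{3}\in S$. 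Indeed, (i) and (ii) make $a+\alpha:=c$ well defined, and $c$ again lies in $S$ since the double is built from a ball interior to $I^{n}$, so $\partial^{i}c=\partial^{i}a=b_{i}$. The Double Rule gives $\OO(a,a)=0$, so $a+0=a$ by (ii), while $(a+\alpha)+\beta=a+(\alpha+\beta)$ follows from (iii) and (ii). Since $\OO_{B}$ depends only on the $n$-tracks fed into it, $a+\alpha=a+\beta$ forces $\alpha=\OO(a+\alpha,a)=\OO(a+\beta,a)=\beta$, so the action is effective; and since $a+\OO(a',a)$ and $a'$ have the same obstruction $\OO(a',a)$ against $a$, (ii) gives $a+\OO(a',a)=a'$, so the action is transitive.

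For (i) I would use a pinch construction: a representative $I^{n}\to X$ of $a$ vanishes on the faces $d_{1}^{i}$, so near an interior point of one of them there is room to insert a representative of $\alpha$, producing a $c\in S$ that equals the given representative away from the insertion region. Then $\OO(c,a)=\OO_{B}(c,a)$ — computed by placing $c$ on the orientation-preserving cell $B_{1}$ and $a$ on $B_{2}$ of the double $B$ and collapsing to $B/\partial B\approx S^{n}$ — equals $\alpha$: the ``$a$-part'' of $c$ on $B_{1}$ cancels against $a$ on $B_{2}$ (this cancellation being the homotopy underlying the Double Rule), leaving only the inserted class. Granting (i) and (ii), claim (iii) can be obtained by iterating the pinch: realize $a_{2}$ (up to homotopy rel $\partial I^{n}$) as $a_{1}$ with $\OO(a_{2},a_{1})$ inserted near one interior point, and $a_{3}$ as $a_{2}$ with $\OO(a_{3},a_{2})$ inserted near a disjoint one; then the obstruction over the double picks up both inserted spheres and yields (iii) — immediately for $n\geq2$ as $\pi_{n}(X)$ is abelian, and for $n=1$ once the two insertions are placed in the order dictated by the orientation signs $\epsilon_{1}=-\epsilon_{2}$ of the cells of the double. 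Alternatively (iii) follows from the Complement Rule applied to two copies of the double glued along the cell carrying $a_{2}$.

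The main obstacle is (ii), a converse to the Double Rule: $\OO(c,c')=0$ must force $c\simeq c'$ rel $\partial I^{n}$. Write the double as $B=B_{1}\cup B_{2}$, glued along the faces $T_{0}^{n-1}$, with $\partial B$ the union of the two copies of $T_{1}^{n-1}$. A nullhomotopy rel $\partial B$ of the collapsed map $B/\partial B\approx S^{n}\to X$ restricts over $B_{1}$ and $B_{2}$ to homotopies that together form a homotopy from $c$ to $c'$: it is constant on the $T_{1}$-faces (which map to $o$), but on the shared faces $T_{0}^{n-1}$ it traces a loop of the form $G\ast\overline{G}$. Such a loop is nullhomotopic rel its endpoints, so, using a collar of $T_{0}^{n-1}$ in $I^{n}$, the homotopy may be deformed — with support in that collar — into one that is constant on all of $\partial I^{n}$, giving $c\simeq c'$ rel $\partial I^{n}$. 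Equivalently, (ii) can be extracted from the left-cubical-ball machinery, e.g.\ from the Boundary Property (Lemma \ref{lem:boundaryProperty}), which turns the vanishing of the obstruction into an $(n+1)$-track whose faces exhibit the required homotopy. In either form this is the step requiring care; the rest is formal manipulation with the Double and Complement Rules.
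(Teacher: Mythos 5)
The paper states this lemma, like the Complement Rule, the Double Rule, and the Action Formula in the same section, without proof, so there is no paper argument to compare against; your proposal supplies the missing one, and it is correct. The reduction to the three torsor properties --- (i) existence of $c\in S$ with $\OO(c,a)=\alpha$, (ii) its uniqueness, and (iii) the cocycle identity $\OO(a_3,a_1)=\OO(a_3,a_2)+\OO(a_2,a_1)$ --- followed by the formal extraction of well-definedness, $a+0=a$, associativity, freeness, and transitivity is exactly the right organization, and (ii) is indeed the crux (a converse to the Double Rule). Your argument for (ii) works: a nullhomotopy of the collapsed map on $B/\partial B$ restricts over the two cells to nullhomotopies of representatives $\tilde c$, $\tilde c'$ that agree on $T_0^{n-1}\times I$; concatenating one with the reverse of the other gives $\tilde c\simeq\tilde c'$ vanishing on $T_1^{n-1}$ and of the form $G\ast\overline G$ on $T_0^{n-1}$, which the standard cancellation homotopy together with the homotopy extension property of a collar of $T_0^{n-1}$ absorbs, producing a homotopy rel all of $\partial I^{n}$.

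Two points merit tightening. For (i), ``insert $\alpha$ near an interior point'' tacitly uses that $\tilde a$ can first be made constant on a small disk, which requires some local niceness of $X$ (or at least a preliminary compression). A construction that works for arbitrary $X$ and visibly preserves $\partial^{i}$ is to regard $I^{n}$ as the cone on $T_0^{n-1}$ with vertex $v_1=(1,\dots,1)$, so that $T_1^{n-1}$ is the subcone on $\partial T_0^{n-1}$, and to concatenate $\tilde a$ radially in the cone parameter with a representative of $\alpha$; this fixes $\tilde a|_{T_0^{n-1}}$, sends $T_1^{n-1}$ to $o$, and reduces $\OO(c,a)=\alpha$ to a reparametrization plus the Double Rule cancellation. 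Also, the offered alternatives are weaker than stated: Lemma~\ref{lem:boundaryProperty} concerns the $(n{+}1)$-cell ball $T_0^{n}$ rather than the two-cell double, so one has to re-run its proof rather than quote it; and the Complement Rule only converts a vanishing obstruction into an equality of two others and cannot by itself produce the sum in (iii) --- the additive statement is precisely the Action Formula, which is itself phrased in terms of the action being constructed here. Keep the primary arguments and either drop the alternatives or flag them as heuristic.
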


\begin{lemma}
Let \w{B = B\sb{1} \cup \dotsc \cup B\sb{k}} be a left cubical ball and let
\w[,]{\OO\sb{B}(a\sb{1}, \dotsc, a\sb{k})} 
\w{\OO\sb{B}(a'\sb{1}, \dotsc, a'\sb{k})} be defined, where
$$
\begin{cases}
a'\sb{i} = a\sb{i} & \text{for $i \neq j$}\\
a'\sb{j} = a\sb{j} + \alpha & \text{for $i = j$, $\alpha \in \pi\sb{n}(X)$}.
\end{cases}
$$
\noindent Then we have the \emph{Action Formula}:
$$
\hspace*{40mm}\OO(a'\sb{1}, \dotsc, a'\sb{k}) =
\OO(a\sb{1}, \dotsc, a\sb{k}) + \vare\sb{j} \alpha~.\hspace*{40mm}\Box
$$
\end{lemma}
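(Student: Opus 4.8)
\emph{Approach.} The plan is to represent both obstructions as homotopy classes of maps \w{B/\partial B\approx S^{n}\to X} which agree outside the interior of the single cell \w[,]{B_{j}} so that their difference is a ``bump'' supported in \w[,]{\operatorname{int}(B_{j})\subset\operatorname{int}(B)} and then to determine the sign this bump acquires inside \w[.]{B/\partial B\approx S^{n}}

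\emph{Compatible representatives.} Since the tracks \w{a_{1},\dots,a_{k}} satisfy the gluing condition \wref[,]{egc} choose maps \w{g_{i}:I^{n}\to X} representing \w{a_{i}} with \w{g_{i}\rest{T_{1}^{n-1}}=o} so that \wref{egc} holds strictly, and glue them by the \w{h_{i}} into \w{g:B\to X} with \w{g(\partial B)=o} and \w[.]{\OO(g)=\OO_{B}(a_{1},\dots,a_{k})} By the preceding lemma on the action \w[,]{+} the track \w{a_{j}+\alpha} has the same restriction to \w{\partial I^{n}} as \w[,]{a_{j}} so I would pick a representative \w{g_{j}'} of \w{a_{j}+\alpha} that agrees with \w{g_{j}} on \w{\partial I^{n}} and differs from it only by a bump concentrated in a small ball \w{D\subset\operatorname{int}(I^{n})} (which via \w{h_{j}} we view in \w[);]{\operatorname{int}(B_{j})\subset\operatorname{int}(B)} since the action \w{+\alpha} is defined through the left cubical double \wh whose distinguished cell carries \w{a_{j}+\alpha} and has orientation sign \w{+1} \wh the class \w{D/\partial D\to X} of that bump, taken with the standard orientation of \w[,]{I^{n}} is exactly \w[.]{\alpha} Replacing \w{g_{j}} by \w{g_{j}'} does not disturb the gluing equations (\w{g_{j}'=g_{j}} on every face of \w[),]{I^{n}} and so produces \w{g':B\to X} with \w[.]{\OO(g')=\OO_{B}(a_{1}',\dots,a_{k}')}

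\emph{Comparison.} The maps \w{g} and \w{g'} coincide on \w[,]{\overline{B\setminus\operatorname{int}(B_{j})}} hence differ only by the bump, now lying in \w[.]{D\subset\operatorname{int}(B_{j})\subset\operatorname{int}(B)} By the standard difference-class computation, \w{\OO(g')-\OO(g)} is the pullback of the bump class \w{\alpha\in\pi_{n}\bigl(B_{j}/\partial B_{j}\bigr)} along the collapse \w[,]{B/\partial B\to B_{j}/\partial B_{j}} which is a self-map of \w{S^{n}} of degree \w{\epsilon_{j}} (by the definition of the orientation sign of \w[).]{B_{j}} Thus \w[,]{\OO_{B}(a_{1}',\dots,a_{k}')-\OO_{B}(a_{1},\dots,a_{k})=\epsilon_{j}\alpha} which is the Action Formula.

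\emph{Main obstacle.} The real content is the sign bookkeeping: one must verify that the definition of \w{\epsilon_{j}} via \w[,]{B_{j}\hookrightarrow B} the orientation convention hidden in the definition of \w{a+\alpha} (the \wwb{+1}cell of the double carries \w[),]{a+\alpha} and the chosen identification \w{B/\partial B\approx S^{n}} fit together, so that the bump contributes \w{+\epsilon_{j}\alpha} rather than \w[.]{-\epsilon_{j}\alpha} I would settle this using the fixed orientations of \w{I^{n}} and \w{E^{n}} in \w{\real^{n}} (as in the definition of the orientation sign) and check it against the Double Rule and the case \w{B=B_{1}\cup B_{2}} of a double, where \w{\epsilon_{1}=+1} and \w[;]{\epsilon_{2}=-1} one should also confirm the (standard) fact that the \w{\pi_{n}(X)}-action on boundary-fixed \ww{n}-tracks given in the paper via the left cubical double is realized by an interior bump, as used above.
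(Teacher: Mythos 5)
The paper states the Action Formula without proof (as it does for the Complement Rule and the Double Rule in that section), so there is no internal proof to compare your argument against. Evaluated on its own, your proof is the right one: you realize both obstructions as maps \w{B/\partial B\approx S^{n}\to X} that agree off the open cell \w[,]{B_{j}} localize the difference to an interior bump, and identify the factor \w{\epsilon_{j}} as the degree of the collapse \w[.]{B/\partial B\to B_{j}/\partial B_{j}} This is a sound and standard difference--class argument, and the orientation conventions line up exactly as you say (the fixed orientations of \w{I^{n}} and \w{E^{n}} in \w{\real^{n}} used to define \w{\epsilon_{j}} are the same ones entering \w{\OO} via~\wref[).]{eqobstr}

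Two points deserve tightening, both of which you already flag yourself. First, the claim that \w{a_{j}+\alpha} is realized by modifying a representative of \w{a_{j}} by an interior bump of class \w{\alpha} (measured with the standard orientation of \w[)]{I^{n}} is genuinely equivalent to the paper's definition of \w{+} via the left cubical double with \w[,]{\epsilon_{1}=+1} but it requires a short verification: the double \w{I^{n}\cup_{T_{0}^{n-1}}I^{n}} modulo \w{T_{1}^{n-1}\cup T_{1}^{n-1}} must be identified \emph{orientation-compatibly} with \w{I^{n}\cup_{\partial I^{n}}\bigl(I^{n}\bigr)^{-}} so that \w{\OO_{\mathrm{double}}(a+\alpha,a)} coincides with the usual difference class; this is routine but should be said. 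Second, where you write ``the bump class \w[,]{\alpha\in\pi_{n}(B_{j}/\partial B_{j})}'' you presumably mean a map \w{B_{j}/\partial B_{j}\to X} of class \w[;]{\alpha\in\pi_{n}(X)} the pullback along the collapse then correctly acquires the factor \w[.]{\epsilon_{j}} Neither point is a gap in the argument's logic, only in its exposition.
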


%
%
\sect{The algebra of left $n$-cubical balls}
\label{cntc}

In this section we collect together those properties of left $n$-cubical balls in
the mapping spaces of a mapping algebra $\cC$ which are needed for the
construction of higher order resolutions. These properties are encoded in
the notion of the \emph{algebra} \w{(\cT,\cA,D,\OO)} of left
$n$-cubical balls in $\cC$. The main example we have in mind is given as follows:

Let $\cC$ be a category enriched in pointed spaces. Let \w{n \geq 1} and let
\w{\cT~=~\NnC} and \w[.]{\cA~=~\pi\sb{0} \cC} We define a functor
\w{D:\cA\op \times \cA \lra \Ab} by
\w[,]{D(X, Y):= \pi\sb{n} \Mor\sb{\cC}(X, Y)} so
\w{\OO\sb{B}(a\sb{1}, \dotsc, a\sb{k})} is defined in \w{\Nul{n}\Mor\sb{\cC}(X, Y)}
as in \wref[.]{eqobstr}

For \w{n = 1} we assume that the group
\w{\pi\sb{1} \Mor\sb{\cC}(X, Y)} is abelian for all objects $X$, $Y$ in $\cC$.

As shown in Section \ref{cob}, the system \w{(\cT,\cA,D,\OO)} is a special case
of the following abstract concept:

\begin{defn}\label{dalgncb}
An \emph{algebra of left $n$-cubical balls} \wb{n \geq 1} is a
system \w{\cT=(\cT,\cA,D,\OO)}
given by an $n$-graded category $\cT$, a quotient functor
\w[;]{\cT\sp{0} \lra \cA} a bifunctor
\w[,]{D:\cA\op \times \cA \lra \Ab} and an obstruction operator $\OO$.
The following properties are required to hold:
\begin{enumerate}
\renewcommand{\labelenumi}{(\arabic{enumi})~}
\item $\cT$ is enriched in left $n$-cubical sets and has zero maps, that is,
for all objects $X$, $Y$ in $\cT$, we have the $n$-cubical set
\w{\Mor\sb{\cT}(X, Y)} with operators
\w{(d\sb{o}\sp{i})\sp{\ast} = \partial\sp{i}} and
zero elements \w{o\sp{t} \in \Mor\sb{\cT}(X, Y)\sb{t}} such that
\begin{equation*}
\begin{split}
\partial\sp{i}(f g)~=~(\partial\sp{i} f) g & \hsm\text{for} \hsm i \leq \dim(f)\\
\partial\sp{i}(f g)~=~f (\partial\sp{i - \dim(f)} g) &
\hsm \text{for}\hsm i > \dim(f)\\
o\sp{t} g~=~o\sp{t + \dim(g)} & \\
f o\sp{h}~=~o\sp{\dim(f) + h}~. &
\end{split}
\end{equation*}
\noindent Here \w{f g} is the composite in the $n$-graded category $\cT$,
which is defined if \w[.]{\dim(f) + \dim(g) \leq n}
\item The $0$-skeleton $\cT^0$ is the subcategory of $\cT$ consisting
of morphisms $f$ with \w[,]{\dim(f) = 0} this is a category together with a
functor \w{q:\cT^0 \lra \cA} which is the identity on objects and full
(a quotient functor). Moreover, $D$ is a bifunctor
$$
\xymatrix@1{D:\cA\op \times \cA\ar[r] & \Ab}
$$
\noindent into the category of abelian groups. Here $D$ defines via $q$
a bifunctor on \w{\cT\sp{0}} which satisfies \w{(o^0)\sp{\ast} = o} and
\w[.]{(o^0)\sb{\ast} = o}
For a zero map \w{o^0:X \lra Y} in \w{\cT\sp{0}} we obtain the zero map
\w{o\sb{X, Y} = q(o^0)} in $\cA$.

For \w{f:X \lra Y}in \w[,]{\cT\sp{0}} we have \w{q(f) = o\sb{X, Y}} if and only
if there is \w{F:X \lra Y} in $\cT$ with \w{\dim(F) = 1} and
\w[.]{\partial\sp{1} F = f} This is the boundary property in dimension~1.
\item The obstruction operator $\OO$ yields for each left cubical ball $B$
an element
$$
\OO\sb{B}(a\sb{1}, \dotsc, a\sb{k}) \in D(X, Y)
$$
\noindent where \w{a\sb{1}, \dotsc, a\sb{k} \in \Mor_\cT(X, Y)\sb{n}} is a
$k$-tuple satisfying the gluing condition in $B$, see~\wref[.]{egc}

This obstruction operator satisfies the complement rule, the double rule,
and the action formula as in Section \ref{cob}. Here the \emph{action} $+$ of
\w{D(X, Y)} on the set \w{\Mor_\cT(X, Y)\sb{n}} is defined by:
$$
\text{if}\hsm \OO\sb{B}(b, a) = \alpha,\hsm\text{then}\hsm
b = a + \alpha~.
$$
\noindent This $b$ is required to be unique. Here $B$ is the double of
\w{I\sp{n}} with \w[.]{\vare\sb{1} = +1}

The action $+$ is transitive and effective on the set of all elements $a$ in
\w{\Mor_\cT(X, Y)\sb{n}} which coincide on the boundary (that is,
\w[,]{\partial\sp{i} a = b\sb{i}} where \w{(b\sb{1}, \dotsc, b\sb{n})} is fixed).
\item The obstruction operator satisfies for
$$
f \in \Mor_\cT(X', X)\sb{0}\hsm\text{and}\hsm g \in \Mor_\cT(Y, Y')\sb{0}
$$
\noindent the \emph{naturality rule}
\begin{myeq}\label{eqnatur}
\begin{split}
\OO\sb{B}(g a\sb{1}, \dotsc, g a\sb{k}) &= 
g\sb{\ast} \OO\sb{B}(a\sb{1}, \dotsc, a\sb{k})\\
\OO\sb{B}(a\sb{1} f, \dotsc, a\sb{k} f) &= 
f\sp{\ast} \OO\sb{B}(a\sb{1}, \dotsc, a\sb{k})~.
\end{split}
\end{myeq}
\noindent Here \w{f\sp{\ast}} and \w{g\sb{\ast}} denote the induced maps on $D$.
This implies that \w{g(a + \alpha) = g a + g\sb{\ast}\alpha} and
\w[.]{(a + \alpha) f = a f + f\sp{\ast} \alpha}
\item The obstruction operator satisfies the following \emph{triviality rule}:
For morphisms
$$
\xymatrix@1{Z & Y\ar[l]_f & X\ar[l]_g}
$$
\noindent in $\cT$ with $\dim(f), \dim(g) \leq n$ and
$$
\dim(f) + \dim(g) = n + 1
$$
\noindent we have the \wwb{n+1}tuple \w{(a\sb{1}, \dotsc, a\sb{n + 1})} in
\w{\Mor_\cT(X, Z)\sb{n}} given by
$$
a\sb{t} =
\begin{cases}
(\partial^t f) g & \text{for}\hsm 1 \leq t \leq \dim(f),\\
f(\partial\sp{t - \dim(f)} g) & \text{for}\hsm \dim(f) < t \leq n + 1.
\end{cases}
$$
\noindent This \wwb{n+1}tuple satisfies the gluing condition in
\w[.]{B = \cT\sb{0}\sp{n}} The associated obstruction
$$
\OO\sb{B}(a\sb{1}, \dotsc,a\sb{n + 1}) = 0
$$
\noindent is trivial.
\end{enumerate}
\end{defn}

\begin{remark}
The case \w{n=1} is classical, and is considered in the next section\vsm .
\end{remark}

We now generalize the concept of an $n$-th order chain complex (defined
in Section \ref{chocc} for a mapping algebra $\cC$), replacing \w{\NnC} by
an abstract algebra of left $n$-cubical balls $\cT$:

\begin{defn}\label{defi:preChainComplex}
Let \w{(\cT, \cA, D, \OO)} be an algebra of left $n$-cubical balls.
A functor of $n$-graded categories
$$
K:\bZ(N, M)\sb{\otimes}\sp{n}~\to~\cT
$$
\noindent satisfying the inclusion property \wref{epcc} is an
\emph{$n$-th order pre-chain complex} in $\cT$. This is an
\emph{$n$-th order chain complex} in $\cT$ if for
\w[,]{i,i - n - 2 \in \bZ(N, M)} the obstructions
$$
\OO K(i, \partial I\sb{n + 1}) = \OO\sb{B}(b\sb{1}, \dotsc, b\sb{n + 1}) = 0
$$
\noindent vanish. Here $B$ is the left cubical ball \w[,]{B = \cT\sb{0}\sp{n}} and
$$
K(i, \partial I\sb{n + 1}) = \begin{cases}
b\sb{1} = K(i, \emptyset \otimes I\sb{n})\\
b\sb{r + 1} = K(i, I\sb{r} \otimes I\sb{n - r}) & 
\text{for $1 \leq r \leq n - 1$}\\
b\sb{n + 1} = K(i, I\sb{n} \otimes \emptyset)
\end{cases}
$$
\noindent (see~\wref[).]{eqoe} Since $K$ is a functor we have
$$
\begin{array}{lclcl}
K(i, \emptyset \otimes I\sb{n}) & = & K(i - n - 1, \emptyset) K(i, I\sb{n}) & = &
\delta\sb{i - n - 1} K(i, I\sb{n})\\
K(i, I\sb{r} \otimes I\sb{s}) & = & K(i - s - 1, I\sb{r}) K(i, I\sb{s})\\
K(i, I\sb{n} \otimes \emptyset) & = & K(i - 1, I\sb{n}) K(i, \emptyset) & = &
K(i - 1, I\sb{n}) \delta\sb{i}
\end{array}
$$
\noindent where the right hand side denotes composition in $\cT$. We define
higher order Toda brackets in $\cT$
$$
\langle\delta\sb{1}, \dotsc, \delta\sb{n + 2}\rangle \subset 
D(K\sb{n + 2}, K\sb{0})~
$$
\noindent as in Definition \ref{dtoda}.
\end{defn}

%
%
\sect{Track categories and algebras of left $1$-cubical balls}
\label{stcaone}

In this section we discuss algebras of left $n$-cubical balls in the special
case \w[.]{n=1} We show that every abelian track category (cf.\ \cite{BW})
has the structure of an algebra of left $1$-cubical balls. This shows that
algebras of left $n$-cubical balls are $n$-dimensional analogues of
track categories for every \w[.]{n \geq 1}

\begin{defn}\label{dtrack}
A \emph{track category} is a category $\cC$ enriched in groupoids. For
objects $X$, $Y$ in $\cC$ we have the groupoid \w{\Mor\sb{\cC}(X, Y)} with
objects $f$, $g$ and morphisms \w[.]{F:f \lra g}

The morphisms \w{F:f \lra f} form the automorphism group \w[,]{\Aut\sb{\cC}(f)}
and we write \w{f \simeq g} if there is \w[.]{F:f \lra g} Let
\w[,]{\dim(f) = 0} \w[,]{\dim(F) = 1} \w[,]{(d\sb{0}\sp{1})\sp{\ast} F = f} and
\w[.]{(d\sb{1}\sp{1})\sp{\ast} F = g} Morphisms of dimension $0$ form the category
\w[,]{\cC\sb{0}} and the homotopy relation $\simeq$ defines the homotopy category
$$
\cA = \pi\sb{0} \cC = \cC\sb{0}/\!\simeq.
$$
\end{defn}

The track category $\cC$ is called \emph{abelian} if all automorphism groups
\w{\Aut\sb{\cC}(f)} are abelian groups. We assume that $\cC$ has zero maps
\w{o\sb{X, Y} \in \Mor\sb{\cC}(X, Y)\sb{0}} for each \w[.]{X,Y\in\cC}
We then obtain a bifunctor \w{D:\cA\op \times \cA\to\Ab} defined
$$
D(X, Y)~:=~\Aut\sb{\cC}(o\sb{X, Y})~.
$$

We define the $1$-graded category $\cT$ associated to $\cC$
(cf.\ \S \ref{dgradec}) by
$$
\Mor_\cT(X, Y)\sb{i}~:=~\begin{cases}
\Mor\sb{\cC}(X, Y)\sb{0} & \text{if}~~i=0\\
\{(F, f), F:f\to o\sb{X, Y}\} \subset \Mor\sb{\cC}(X, Y)\sb{1} & \text{if}~~ i=1.
\end{cases}
$$
\noindent Let \w{\partial\sp{1}} be defined by \w[,]{\partial\sp{1}(F, f) = f}
and let the zero elements be given by \w[,]{o\sp{0} = o\sb{X,Y}}
\w{o\sp{1} =} identity of \w[.]{o\sb{X, Y}}

\begin{prop}
For any abelian track category $\cC$ with zero maps,
\w{\Nul{1}(\cC) = (\cT, \cA, D, \OO)} with $\cT$, $\cA$ and $D$ as
above and with the obstruction operator $\OO$ as follows is an algebra of left
$1$-cubical balls. \hfill $\Box$
\end{prop}

Up to equivalence there is only one left cubical ball $B$ of dimension $1$:
this is the double of $I$, which is equivalent to \w[.]{T\sb{0}\sp{1}} Given
\w{a\sb{1} = (F, f)} and \w{a\sb{2} = (G, g)} with gluing condition
\w[,]{\partial\sp{1} a\sb{1} = f = g = \partial\sp{1} a\sb{2}} let
$$
\OO\sb{B}(a\sb{1}, a\sb{2})~:=~ F G\sp{-1} \in \Aut\sb{\cC}(o\sb{X, Y})
$$
\noindent be the \emph{obstruction}. The action for
\w{\alpha \in \Aut\sb{\cC}(o\sb{X, Y})} and \w{a = (F, f)} is given by
\w[,]{a + \alpha = (\alpha F, f)} with
\w[.]{\OO\sb{B}(a + \alpha, a) = \alpha F F\sp{-1} = \alpha}
The triviality rule of $\OO$ is satisfied, since for a diagram
$$
\xymatrix{Z && Y\ar[ll]^g\ar@/_1.3em/[ll]\sp{\Uparrow G}\sb{o} && 
X~,\ar[ll]^f\ar@/_1.3em/[ll]\sp{\Uparrow F}\sb{o}}
$$
\noindent in $\cC$ we have the formula \w{G f = g F} by the usual
interchange law (since \w[),]{o\sb{Y,Z}F=G o\sb{X,Y}=\id\sb{o\sb{X,Z}}}
so that
$$
\OO\sb{B}(a\sb{1}, a\sb{2}) = o\hs \text{for}\hsm
a\sb{1} = G(\partial\sp{1}F)\hsm
\text{and}\hsm a\sb{2} = (\partial\sp{1} G)F ~.
$$

\begin{example}
Let $\cC$ be a category enriched in groupoids with zero maps and let $\cC$
be abelian. Then the algebra of left $1$-cubical balls \w{\Nul{1}(\cC)} is
defined and a \emph{triple Toda bracket}
$$
\langle\delta\sb{1}, \delta\sb{2}, \delta\sb{3}\rangle\hsm\text{in}\hsm 
\Nul{1}(\cC)
$$
\noindent coincides with the classical triple Toda bracket in $\cC$.
Moreover, a $1$-st order chain complex in \w{\Nul{1}(\cC)} as defined in
\S \ref{defi:preChainComplex} coincides with a \emph{secondary chain complex}
in $\cC$ as studied in~\cite{BJSe}.
\end{example}

\begin{remark}
Abelian track categories are classified by cohomology, see
\cite{BW,BD,PiraH,BAl,BJCl}. It would be interesting to classify accordingly
algebras of left $n$-cubical balls for \w[.]{n \geq 1}
\end{remark}

%
%
\sect{The inductive step of the resolution theorem}
\label{cisrt}

Resolution Theorem \ref{tresol} refers to higher order resolutions in a mapping
algebra. In the following section we shall prove a more general resolution
theorem, for certain types of algebras of left $n$-cubical balls, which
we now define. The proof is by induction, and we give the induction step here.

\begin{defn}
An algebra of left $n$-cubical balls \w{\cT = (\cT, \cA, D, \OO)} is
\emph{a $\Sigma$-algebra of left $n$-cubical balls} if
\w{\cA = (\cA, \aC, \Sigma)} is a $\Sigma$-algebra (cf.\ \S \ref{dsa})
and \[D(X, Y) = \Hom\sb{\cA}(\Sigma\sp{n} X, Y)\]
for $X$ in $\aC$ and $Y$ in $\cA$. We say that $\cT$ is an
\emph{$\Omega$-algebra of left $n$-cubical balls} if
\w{\cA = (\cA, \aC, \Omega)} is an $\Omega$-algebra (cf.\ \S \ref{dsa})
and
$$
D(X, Y)~=~\Hom\sb{\cA}(X, \Omega\sp{n} Y)
$$
\noindent for $Y$ in $\aC$ and $X$ in $\cA$.
\end{defn}

\begin{thm}\label{thota}
Let $\cT$ be a $\Sigma$-track algebra in dimension $n$ and consider a
functor of $n$-graded categories
$$
K:\bZ(\infty, -1)_\otimes\sp{n}~\to~\cT
$$
\noindent which is a pre-chain complex (cf.\ \S \ref{dprech}), and which is
based on an $\aC$-resolution \w{A\sbu} of $X$ in $\cA$
(cf.\ \S \ref{dbasedon}). Then there exists a functor
$$
\xymatrix@1{K':\bZ(\infty, -1)_\otimes\sp{n}\ar[r] & \cT}
$$
\noindent which coincides with $K$ in dimension \w[,]{\leq n - 1} and which is an
$n$-th order chain complex in $\cT$ (based on \w[).]{A\sbu}
\end{thm}

The dual also holds.

\begin{thm}\label{thdota}
Let $\cT$ be an $\Omega$-track algebra in dimension~$n$ and consider a
functor of $n$-graded categories
$$
\xymatrix@1{L:\bZ(+1, -\infty)_\otimes\sp{n}\ar[r] & \cT}
$$
\noindent which is a pre-chain complex and which is based on an
$\aC$-coresolution \w{A\ubu} of $Y$ in $\cA$. Then there exists a
functor
$$
\xymatrix@1{L':\bZ(+1, -\infty)_\otimes\sp{n}\ar[r] & \cT}
$$
\noindent which coincides with $L$ in dimension $\leq n - 1$ and which is an
$n$-th order chain complex in $\cT$ (and is based on \w[).]{A\ubu}
\end{thm}

\begin{proof}[Proof of Theorem \protect\ref{thota}]
The functor \w{K'} is determined by $K$ in dimension \w{\leq n - 1} and by
\begin{myeq}\label{eqpisrto}
K'(i, I\sb{n}) = K(i, I\sb{n}) + \alpha\sb{i}
%
\end{myeq}
\noindent in dimension $n$. See Lemma \ref{lgcc}. Here the elements 
\w{\alpha\sb{i}} are obtained inductively as follows. We have to 
choose \w{\alpha\sb{i}} for \w{i \geq n} in such a way that the obstruction
\begin{myeq}\label{eqpisrtt}
\xi(\alpha\sb{i - 1}, \alpha\sb{i}) = \OO\sb{B}(\delta\sb{i - n - 1}
K'(i, I\sb{n}), b\sb{2}, \dotsc, b\sb{n}, K'(i - 1, I\sb{n}) \delta\sb{i})
\end{myeq}
\noindent vanishes with \w{b\sb{r + 1} = K(i, I\sb{r} \otimes I\sb{n - r})} for
\w[,]{1 \leq r \leq n - 1} see~\ref{defi:preChainComplex}. We start with
\w{i=n+1} and \w[.]{\alpha\sb{n}:=0}
In this case \w{(\delta\sb{0})\sb{\ast}} is surjective since \w{A\sbu} 
is a resolution with \w[.]{\delta\sb{0}:A\sb{0} \lra A\sb{-1}$, $A\sb{-1} = X}
The action rule implies that
\begin{myeq}
\label{eqpisrtth}
\xi(\alpha\sb{i - 1}, \alpha\sb{i}) = \xi(0, 0) +
\vare\sb{1}(\delta\sb{i - n - 1})\sb{\ast} \alpha\sb{i} +
\vare\sb{n + 1} (\delta\sb{i})\sp{\ast} \alpha\sb{i - 1}~.
\end{myeq}
\noindent Here \w{\vare\sb{1}, \dotsc, \vare\sb{n + 1}} are the orientation
signs for the left cubical ball \w[.]{B = T\sb{0}\sp{n}} For \w[,]{i = n + 1} 
we obtain for \w{\alpha\sb{n} = 0} the equation:
$$
\xi(0, \alpha\sb{n + 1}) = \xi(0, 0) + \vare\sb{1}(\delta\sb{0})\sb{\ast} 
\alpha\sb{n + 1}~.
$$
\noindent Since \w{(\delta\sb{0})\sb{\ast}} is surjective, there is an
\w{\alpha\sb{n + 1}} with \w[.]{\xi(0, \alpha\sb{n + 1}) = 0}

We now consider \wref{eqpisrtth} for \w[.]{i = n + 2}
Using Lemma \ref{lhaupt}, we shall show that
\begin{myeq}\label{eqpisrtf}
(\delta\sb{0})\sb{\ast} \xi(\alpha\sb{n + 1}, \alpha\sb{n + 2}) = 0.
\end{myeq}
Since \w{A\sbu} is a resolution, this implies that
\begin{myeq}
\xi(\alpha\sb{n + 1}, \alpha\sb{n + 2}) \in \image(\delta\sb{1})\sb{*}~.
\end{myeq}

Since by \wref{eqpisrtth} we have
\begin{myeq}
\xi(\alpha\sb{n + 1}, \alpha\sb{n + 2}) = \xi(0, 0) +
\vare\sb{1}(\delta\sb{1})\sb{\ast} \alpha\sb{n + 2} +
\vare\sb{n + 1} \delta\sb{n + 2}\sp{\ast} \alpha\sb{n + 1}~,
\end{myeq}
\noindent we can choose \w{\alpha\sb{n + 2}} with
\w[.]{\xi(\alpha\sb{n + 1}, \alpha\sb{n + 2}) = 0}
Thus we get by induction \w{\alpha\sb{i}} \wb[,]{i \geq n} such that
\w[.]{\xi(\alpha\sb{i - 1}, \alpha\sb{i}) = 0} Hence \w[,]{K'} defined by
\wref[,]{eqpisrto} satisfies the obstruction property, and hence is an
$n$-th order chain complex as in the Theorem. In the next Lemma we show
that \wref{eqpisrtf} holds.
\end{proof}

We introduce the following notation on the `boundary' of \w{I\sb{n + 1}}
\wb[.]{n \geq 0}
Let
$$
\partial I\sb{1}~:=~\emptyset \otimes \emptyset~,
$$
\noindent and for \w[,]{n \geq 1} let
\begin{myeq}\label{eqbdcube}
\partial I\sb{n + 1} = (\emptyset \otimes I\sb{n}, I\sb{1} \otimes 
I\sb{n - 1}, I\sb{2} \otimes I\sb{n - 2}, \dotsc, I\sb{n - 1} 
\otimes I\sb{1}, I\sb{n} \otimes \emptyset)~.
\end{myeq}
\noindent (see~\wref[).]{eqoe} 

Given a pre-chain complex \w[,]{K':\bZ(\infty, -1)_\otimes\sp{n} \lra \cT}
we obtain for \w{i \geq n\geq 1} the obstruction element
$$
\OO\sb{B} K'(i, \partial I\sb{n + 1})
$$
\noindent where \w[.]{B = T\sb{0}\sp{n}} This corresponds to \wref{eqpisrtt} in
the proof above.

\begin{hlemma}\label{lhaupt}
Let \w{n \geq 1} and \w[,]{i \geq n + 2} and assume that
$$
\OO\sb{B} K'(i - 1, \partial I\sb{n + 1}) = 0~.
$$
\noindent Then we also have
$$
\OO\sb{B} K'(i, \emptyset \otimes \partial I\sb{n + 1}) = 0~.
$$
\end{hlemma}

\begin{remark}\label{rhaupt}
For the proof of Hauptlemma \ref{lhaupt}, we use the following equation given 
by the triviality rule of \S \ref{dalgncb}(5):

\begin{myeq}\label{ehlo}
\OO\sb{B} K'(i, \partial\sb{r, s})~=~0
\end{myeq}
\noindent for  with \w[,]{r + s = n + 1} \w[,]{r,s \geq 1} and 
\w[,]{i \geq n} where
$$
\partial\sb{r, s}~:=~((\partial I\sb{r}) \otimes I\sb{s},\ 
I\sb{r} \otimes(\partial I\sb{s})).
$$
\noindent The hypothesis of the Lemma  implies that
\begin{myeq}\label{ehlt}
\OO\sb{B} K'(i, (\partial I\sb{n + 1}) \otimes \emptyset) = 0
\end{myeq}
by the naturality rule \wref[.]{eqnatur}

One should think of the Lemma as relating the obstructions associated to 
the boundaries of the various sub-$(n+1)$-balls of the `corner' left cubical
ball \w{T\sb{0}\sp{n+1}\subseteq\partial I\sp{n+2}} (see \S \ref{egtball}).
The two obstructions of the Lemma are those associated to the \wwb{n+1}balls 
\w{\emptyset\otimes I\sb{n+1}} and \w[,]{I\sb{n+1}\otimes\emptyset}
respectively.  However, we must describe them in terms of the lists of \w{n+1} 
left $n$-cubes constituting their boundaries, namely,
\begin{myeq}\label{eqx}
X~:=~(\emptyset\otimes\emptyset\otimes I\sb{n},\,
\emptyset\otimes\partial\sb{1,n},\,\emptyset\otimes\partial\sb{2,n-1},\,\dotsc
\emptyset\otimes\partial\sb{n,1},\,\emptyset\otimes I\sb{n}\otimes\emptyset)
\end{myeq}
\noindent (cf.\ \wref[)]{eqbdcube} and
\begin{myeq}\label{eqy}
Y~:=~(\emptyset\otimes I\sb{n}\otimes\emptyset,\,
\partial\sb{1,n}\otimes\emptyset,\,\partial\sb{2,n-1}\otimes\emptyset,\,\dotsc
\partial\sb{n,1}\otimes\emptyset,\,I\sb{n}\otimes\emptyset\otimes\emptyset)~,
\end{myeq}
\noindent respectively. The vanishing of the obstruction \wref{ehlt} means that 
\w[,]{Y\sim 0} and similarly 
\w{\OO\sb{B} K'(i, \emptyset \otimes \partial I\sb{n + 1})} vanishes if and only 
if \w[.]{X\sim 0}

All the other \wwb{n+1}cubes of \w{T\sb{0}\sp{n+1}}
are non-trivial $\otimes$-products, with boundaries of the form 
\w{\partial\sb{r,s}} for \w{r + s = n + 1} \wb[,]{r,s \geq 1} 
so the corresponding obstructions vanish by the triviality rule. This allows us 
to extend the \wwb{n+1}balls bounded by $X$ and $Y$, respectively, to larger 
\wwb{n+1}balls bounded by \w{X'} and \w[,]{Y'} in such a way that 
\w{X'\sim X} and \w{Y'\sim Y} (using the complement rule of Lemma \ref{lcompl}).

The Lemma is then proved by showing that one can choose these extensions
so that \w{X'=Y'} (because we have two complementary sub-$(n+1)$-balls of 
\w{T\sb{0}\sp{n+1}} with the same boundary). This implies that the
corresponding obstructions are equal to each other, and thus both vanish by
\wref[.]{ehlt}
\end{remark}

\begin{proof}[Proof of Hauptlemma \protect\ref{lhaupt} for \ $n = 1$]
In this case the triviality rule \wref{ehlo} yields:
\begin{myeq}\label{epohlo}
\partial\sb{1,1}~:=~(\emptyset\otimes\emptyset\otimes I\sb{1},\,
I\sb{1}\otimes\emptyset\otimes \emptyset)~\sim~0~,
\end{myeq}
\noindent while by \wref{ehlt} we have
\begin{myeq}\label{epohlt}
Y~:=~(\emptyset\otimes I\sb{1}\otimes\emptyset,\,
I\sb{1}\otimes\emptyset\otimes\emptyset)~\sim~0.
\end{myeq}
\noindent Thus by the complement rule and \wref{epohlo} we see that
$$
X~:=~(\emptyset\otimes\emptyset \otimes I\sb{1},\, 
\emptyset\otimes I\sb{1} \otimes \emptyset)
$$
\noindent and
$$
X'~:=~(I\sb{1} \otimes \emptyset \otimes \emptyset,\,
\emptyset\otimes I\sb{1}\otimes \emptyset)
$$
\noindent satisfy \w{X\sim X'=Y\sim 0} by~\wref[.]{epohlt} 
\end{proof}

\begin{proof}[Proof of Hauptlemma \protect\ref{lhaupt} for \ $n = 2$]
By \wref[,]{ehlo} we find:
\begin{myeq}\label{epthlo}
(\emptyset\otimes I\sb{1}\otimes I\sb{1},\, 
I\sb{1}\otimes\emptyset\otimes I\sb{1},\, 
I\sb{2}\otimes\emptyset\otimes\emptyset)~\sim~0
\end{myeq}
\noindent and
\begin{myeq}\label{epthlt}
(\emptyset\otimes\emptyset\otimes I\sb{2},\,
I\sb{1}\otimes\emptyset\otimes I\sb{1},\, 
I\sb{1}\otimes I\sb{1}\otimes\emptyset)~\sim~0.
\end{myeq}
\noindent By the assumption~\wref{ehlt} we have
\begin{myeq}
Y~=~(\emptyset\otimes I\sb{2}\otimes\emptyset,\,
I\sb{1}\otimes I\sb{1}\otimes\emptyset,\, 
I\sb{2}\otimes \emptyset\otimes\emptyset)~\sim 0~.
\end{myeq}
\noindent We have to show
\begin{myeq}
X ~=~(\emptyset\otimes\emptyset\otimes I\sb{2},\, 
\emptyset\otimes I\sb{1}\otimes I\sb{1},\,
\emptyset\otimes I\sb{2}\otimes\emptyset)~\sim~0~.
\end{myeq}
\noindent By the complement rule and~\wref{epthlt} (removing
\w[)]{\emptyset\otimes\emptyset\otimes I\sb{2}} we find that
$$
X~\sim~X'~:=~(I\sb{1}\otimes\emptyset\otimes I\sb{1},\,
I\sb{1}\otimes I\sb{1}\otimes\emptyset,\,\emptyset\otimes I\sb{1}\otimes I\sb{1},\,
\emptyset\otimes I\sb{2}\otimes\emptyset)~,
$$
\noindent and similarly by \wref{epthlo} (removing
\w[):]{I\sb{2}\otimes\emptyset\otimes\emptyset} 
$$
Y~\sim~Y'~:=~(\emptyset\otimes I\sb{2}\otimes\emptyset,\, 
I\sb{1}\otimes I\sb{1}\otimes\emptyset,\,
\emptyset\otimes I\sb{1}\otimes I\sb{1},\, I\sb{1}\otimes\emptyset\otimes I\sb{1}) 
~.
$$
\noindent Thus \w[.]{X \sim X' = Y' \sim Y \sim 0}
\end{proof}

\begin{proof}[Proof of Hauptlemma \protect\ref{lhaupt}]
By~\wref{ehlo} we have the relations
\begin{myeq}
\partial\sb{r, s} \sim 0 \hsp \text{for}\hsm r + s = n + 1, \ r \geq 1~.
\end{myeq}
\noindent By~\wref[,]{ehlt} the hypothesis of the Lemma implies that
\begin{myeq}\label{ephlt}
Y~:=~(\partial I\sb{n + 1}) \otimes \emptyset~\sim ~0~.
\end{myeq}
\noindent We have to show that
\begin{myeq}
X~=~\emptyset \otimes (\partial I\sb{n + 1}) \sim 0.
\end{myeq}

We now define inductively a sequence of collections \w{Y\sp{(s)}} of $n$-facets 
of \w{T\sb{0}\sp{n+1}} for \w[.]{0\leq s\leq n}
We start with \w{Y\sp{(0)}:=Y} as in \wref[,]{eqy} and define \w{Y\sp{(1)}} 
by adding
$$
\partial\sb{n,1}~=~(\emptyset\otimes I\sb{n-1}\otimes I\sb{1},\,
I\sb{1}\otimes I\sb{n-2}\otimes\otimes I\sb{1},\,\dotsc,
I\sb{n-1}\otimes I\sb{1}\otimes I\sb{1},\,I\sb{n}\otimes\emptyset\otimes\emptyset)
$$
\noindent and removing the common $n$-cube 
\w{I\sb{n}\otimes\emptyset\otimes\emptyset} to obtain:
$$
Y\sp{(1)}~=~(\underbrace{\emptyset\otimes I\sb{n}\otimes\emptyset,\,\dotsc,
I\sb{n-1}\otimes I\sb{1}\otimes\emptyset},\,
\underbrace{\emptyset\otimes I\sb{n-1}\otimes I\sb{1},\,\dotsc,
I\sb{n-1}\otimes I\sb{1}\otimes I\sb{1}})~.
$$
\noindent Note that \w[,]{Y\sim Y\sp{(1)}} since the associated obstructions 
are equal by the triviality and complement rules. 

By induction we define \w{Y\sp{(s)}} by adding 
\w{\partial\sb{n+1-s, s}} to \w[,]{Y\sp{(s-1)}} so that
\begin{myeq}\label{eqys}
\begin{split}
Y&\sp{(s)}~=~(\underbrace{\emptyset\otimes I\sb{n}\otimes\emptyset,\,\dotsc,
I\sb{n-s}\otimes I\sb{s}\otimes\emptyset},\
\underbrace{\emptyset\otimes I\sb{n-1}\otimes I\sb{1},\,\dotsc,
I\sb{n-s}\otimes I\sb{s-1}\otimes I\sb{1}},\\
&\underbrace{\emptyset\otimes I\sb{n-2}\otimes I\sb{2},\,\dotsc,
I\sb{n-s}\otimes I\sb{s-2}\otimes I\sb{2}},\dotsc,
\underbrace{\emptyset\otimes I\sb{n-s}\otimes I\sb{s},\,\dotsc,
I\sb{n-s}\otimes\emptyset\otimes I\sb{s}})
\end{split}
\end{myeq}
\noindent (consisting of \w{s+1} groups of \w{n-s+1} $n$-cubes each). 
Again by the complement rule \w[,]{Y\sp{(s)}\sim Y\sp{(s-1)}} so by
induction \w[.]{Y\sp{(s)}\sim Y\sim 0} Moreover, since the $n$-cube facets 
of \w{T\sb{0}\sp{n+1}} common to \w{Y\sp{(s-1)}} and 
$$
\partial\sb{s,n+1-s}~=~
((\partial I\sb{s}) \otimes I\sb{n+1-s},\ I\sb{s}\otimes(\partial I\sb{n+1-s}))
$$
\noindent form the left cubical $n$-ball 
\w[,]{I\sb{s}\otimes(\partial I\sb{n+1-s})} by induction we see also that
\w{Y\sp{(s)}} is the boundary of an $n$-ball (the connected sum of 
\w{\partial\sb{s,n+1-s}} and \w[).]{Y\sp{(s-1)}}

Similarly, if we start with \w{X\sp{(0)}:=X} as in \wref[,]{eqx} and define 
\w{X\sp{(1)}} by adding
$$
\partial\sb{1,n}~=~(\emptyset\otimes\emptyset\otimes I\sb{n},\
\underbrace{I\sb{1}\otimes\emptyset\otimes I\sb{n-1},\,\dotsc,
I\sb{1}\otimes I\sb{n-2}\otimes I\sb{1},\,I\sb{1}I\sb{n-1}\otimes\emptyset})
$$
\noindent and removing the common $n$-ball 
\w[,]{\emptyset\otimes\emptyset\otimes I\sb{n}} we obtain
$$
X\sp{(1)}~=~(\underbrace{\emptyset\otimes I\sb{1}\otimes I\sb{n},\,\dotsc,
\emptyset\otimes I\sb{n}\otimes\emptyset},\ 
\underbrace{I\sb{1}\otimes \emptyset\otimes I\sb{n-1},\,\dotsc,
I\sb{1}\otimes I\sb{n-1}\otimes\emptyset})~.
$$
\noindent By induction define \w{X\sp{(r)}} by adding 
\w{\partial\sb{r,n+1-r}} to \w[,]{X\sp{(r-1)}} so that
\begin{myeq}\label{eqxr}
\begin{split}
X&\sp{(r)}~=~(\underbrace{\emptyset\otimes I\sb{r}\otimes I\sb{n-r},\,\dotsc,
\emptyset\otimes I\sb{n}\otimes\emptyset},\
\underbrace{I\sb{1}\otimes I\sb{r-1}\otimes I\sb{n-r},\,\dotsc,
I\sb{1}\otimes I\sb{n-1}\otimes\emptyset},\\
&\underbrace{I\sb{2}\otimes I\sb{r-2}\otimes I\sb{n-r},\,\dotsc,
I\sb{2}\otimes I\sb{n-2}\otimes\emptyset},\dotsc, 
\underbrace{I\sb{r}\otimes\emptyset\otimes I\sb{n-r},\,\dotsc,
I\sb{r}\otimes I\sb{n-r}\otimes\emptyset})
\end{split}
\end{myeq}
\noindent Again \w{X\sp{(r)}} is the boundary of an $n$-ball, and
by the complement rule \w[,]{X\sp{(r)}\sim X\sp{(r-1)}} so by
induction \w[.]{X\sp{(r)}\sim X} 

Finally, assume \w{r+s=n} (e.g., for \w{s=\lfloor \frac{n}{2}\rfloor} and
\w[).]{r=\lceil \frac{n}{2}\rceil} 
In this case, by direct inspection we see that the \w{(r+1)\cdot(n-r+1)}
$n$-cubes of \wref{eqxr} are just a reordering of the \w{(s+1)\cdot(n-s+1)} 
$n$-cubes of \wref[,]{eqys} so \w[.]{X\sp{(r)}=Y\sp{(s)}} 
The reason this happens is that the \wwb{n+1}cube facets of \w{T\sb{0}\sp{n+1}} 
forming the interiors of the two \wwb{n+1}balls bounded by \w{X\sp{(r)}} and 
\w[,]{Y\sp{(s)}} respectively, constitute a partition of all the \wwb{n+1}cubes,
so they partition the \wwb{n+1}ball \w{T\sb{0}\sp{n+1}} into two complementary
\wwb{n+1}balls with a common boundary \w[.]{X\sp{(r)}=Y\sp{(s)}}

Thus 
$$
X~\sim~X\sp{(r)}~=~Y\sp{(s)}~\sim~Y~\sim~0~,
$$
\noindent so the obstruction 
\w{\OO\sb{B} K'(i, \emptyset \otimes \partial I\sb{n + 1})} associated to $X$ 
vanishes, as claimed.
\end{proof}

%
%
\sect{Algebras of left cubical balls and the proof of the Resolution Theorem}
\label{ctaprt}

In order to prove Resolution Theorem \ref{tresol}, we need to relate
algebras of left cubical $n$-balls for different $n$, as follows:

\begin{defn}
A \emph{total algebra of left $n$-cubical balls} \w{\Tln} is a sequence of
algebras of left $m$-cubical balls
$$
\cT(m) = (\cT(m), \cA, D\sb{m}, \OO\sp{m})
$$
for \w[,]{m=1,2, \dotsc, n} together with quotient functor
$$
q:\cT(m + 1)\sp{m}~\to~ \cT(m)
$$
which is the identity on objects, is full, and is the identity functor on 
\wwb{m - 1}skeleta:
$$
q(\cT(m + 1)\sp{m - 1})~=~\cT(m)\sp{m - 1}~.
$$
\noindent Moreover, the \emph{boundary property} of 
Lemma \wref{lem:boundaryProperty} holds \wh that is, for \w[,]{B = T\sb{0}\sp{m}} 
we have
$$
\OO\sb{B}\sp{m}(a\sb{1}, \dotsc, a\sb{m + 1}) = 0
$$
\noindent if and only if there exists \w{\overline a \in \cT(m + 1)\sb{m + 1}} 
with \w{q(\partial\sp{i} \overline a)} representing \w{a\sb{i}} for
\w[.]{i = 1, \dotsc, m + 1}
\end{defn}

\begin{example}
Let $\cC$ be a category enriched in pointed spaces with zero maps. Then
$$
\NlnC~:=~(\NnC, \Nul{n - 1} \cC, \dotsc, \Nul{1}\cC)
$$
\noindent is a total algebra of left $n$-cubical balls.
\end{example}

\begin{defn}
We say that \w{\Tln} is a \emph{$\Sigma$-algebra of left cubical balls}
if \w{\cA = (\cA, \aC, \Sigma)} is a $\Sigma$-algebra as in 
Definition \ref{dsa} and
$$
D\sb{m}(X, Y) = \Hom\sb{\cA}(\Sigma\sp{m} X, Y)
$$
\noindent for \w{m = 1, \dotsc, n} and $X$ in $\aC$ and $Y$ in $\cA$.

Dually we say \w{\Tln} is an \emph{$\Omega$-algebra of left cubical balls} if
\w{\cA = (\cA, \aC, \Omega)} is an $\Omega$-algebra as in~\wref{eqbeg} and
$$
D\sb{m}(X, Y) = \Hom\sb{\cA}(X, \Omega\sp{m} Y)
$$
\noindent for \w{m = 1, \dotsc, n} and $X$ in $\cA$ and $Y$ in $\aC$.
\end{defn}

\begin{example}
If $\cC$ is a $\Sigma$-mapping algebra, then \w{\NlnC} is a
$\Sigma$-algebra of left cubical balls. If $\cC$ is an $\Omega$-mapping algebra, 
then \w{\NlnC} is an $\Omega$-algebra of left cubical balls.
\end{example}

The following Resolution Theorems now generalize those of Section \ref{cehor}:

\begin{thm}\label{thrt}
Let \w{\Tln} be a $\Sigma$-algebra of left cubical balls and let \w{A\sbu}
be an $\aC$-resolution of $X$ in $\cA$. Then there exists an
$n$-th order chain complex
$$
\xymatrix@1{K:\bZ(\infty, -1)_\otimes\sp{n}\ar[r] & \cT(n)}
$$
\noindent which is based on \w[.]{A\sbu} We call $K$ an 
\emph{$n$-th order resolution} of $X$ in \w[.]{\cT(n)}
\end{thm}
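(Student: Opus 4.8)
The plan is to obtain $K$ by climbing the tower of track categories $\TCat(1),\dots,\TCat(n)$ that constitutes $\Tln$, constructing the underlying functor $\bZ(\infty,-1)_\otimes^{n}\to\TCat(n)$ one dimension at a time; at each stage the top‑dimensional part is corrected, using Theorem \ref{thota} applied one level lower, so that the next dimension can be filled in, and the very last correction — Theorem \ref{thota} for $\TCat(n)$ itself, which is a $\Sigma$-track algebra in dimension $n$ by hypothesis — yields the $n$-th order chain complex we want. Precisely, write $q_m\colon\TCat(n)^m=\TCat(m+1)^m\to\TCat(m)$ for the quotient functor of the total $n$-track category when $m<n$, and $q_n=\id$. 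I would prove by induction on $m=0,\dots,n$ that there is a functor of $m$-graded categories $K^{(m)}\colon\bZ(\infty,-1)_\otimes^{m}\to\TCat(n)^m$, agreeing with $K^{(m-1)}$ in dimensions $<m$, satisfying the inclusion property, based on $A_{\bullet}$, and such that $q_m(K^{(m)})$ is an $m$-th order chain complex in $\TCat(m)$; the case $m=n$ is then the theorem.

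For the base case $m=0$ I would lift each $\delta_i$ to a $0$-morphism $\widehat\delta_i\in\TCat(n)^0$ (possible since $q_0$ is full); by Lemma \ref{lgcc} this determines $K^{(0)}$, and $q_0(K^{(0)})=A_{\bullet}$ is a chain complex because it is a resolution. For the inductive step, Lemma \ref{lgcc} shows that extending $K^{(m-1)}$ to dimension $m$ amounts to choosing, for each $i\geq m$, a dimension-$m$ morphism $K^{(m)}(i,I_m)\colon A_i\to A_{i-m-1}$ in $\TCat(n)$ whose $m$ faces are forced by functoriality and the inclusion property to be the composites $b_1=K^{(m-1)}(i,\emptyset\otimes I_{m-1})$, $b_{r+1}=K^{(m-1)}(i,I_r\otimes I_{m-1-r})$ $(1\leq r\leq m-2)$, $b_m=K^{(m-1)}(i,I_{m-1}\otimes\emptyset)$. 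The cubical identities make these faces agree on overlaps, so they glue over $T_0^{m-1}$ and, together with the zero maps over $T_1^{m-1}$, define $\beta_i\colon\partial I^m\to\Mor_{\CCat}(A_i,A_{i-m-1})$; a dimension-$m$ morphism with the required faces exists exactly when $[\beta_i]=0$ in $\pi_{m-1}\Mor_{\CCat}(A_i,A_{i-m-1})=D_{m-1}(A_i,A_{i-m-1})$, and in that case its faces are exactly the $b_j$, so the inclusion property is met. Since $\beta_i$ is zero on the contractible subcomplex $T_1^{m-1}$, collapsing it identifies $\partial I^m$ with $T_0^{m-1}/\partial T_0^{m-1}\approx S^{m-1}$, whence, by the definition of the obstruction operator in Section \ref{cob}, $[\beta_i]=\OO^{m-1}_{T_0^{m-1}}(q(b_1),\dots,q(b_m))=\OO^{m-1}q_{m-1}(K^{(m-1)})(i,\partial I_m)$; this vanishes because $q_{m-1}(K^{(m-1)})$ is an $(m-1)$-st order chain complex by the inductive hypothesis (for $m=1$, because $A_{\bullet}$ is a complex). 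Filling in all these morphisms yields a pre-chain complex $\widetilde K^{(m)}$ in $\TCat(n)$ extending $K^{(m-1)}$ and based on $A_{\bullet}$.

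It remains to correct dimension $m$. Now $q_m(\widetilde K^{(m)})$ is a pre-chain complex in $\TCat(m)$, and $\TCat(m)$ is a $\Sigma$-track algebra in dimension $m$ — here one uses $D_m(X,Y)=\Hom_{\ACat}(\Sigma^m X,Y)$, part of the hypothesis that $\Tln$ is a $\Sigma$-track algebra — so Theorem \ref{thota}, applied with $m$ in place of $n$, supplies elements $\alpha_i\in D_m(A_i,A_{i-m-1})$ for which $q_m(\widetilde K^{(m)})(i,I_m)+\alpha_i$ defines an $m$-th order chain complex in $\TCat(m)$, unchanged in dimensions $<m$. I would then set $K^{(m)}(i,I_m):=\widetilde K^{(m)}(i,I_m)+\alpha_i$, using the $\pi_m$-action on dimension-$m$ morphisms of $\TCat(n)$ relative to the boundary; because this action fixes faces, $K^{(m)}$ still satisfies the inclusion property, and because $q_m$ is compatible with the $D_m$-action, $q_m(K^{(m)})$ is the corrected $m$-th order chain complex, completing the induction. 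For $m=n$, $q_n=\id$ and this last step is simply Theorem \ref{thota} for $\TCat(n)$, producing the desired $K$.

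The genuinely hard work — the inductive adjustment of the top-dimensional cochains, resting on the Hauptlemma and on the action and complement rules — is already contained in Theorem \ref{thota}, which we may invoke; the remaining difficulty, and the part requiring care, is the tower bookkeeping: identifying the obstruction to filling dimension $m$ with the $(m-1)$-st order chain-complex obstruction one level down (the identification $[\beta_i]=\OO^{m-1}_{T_0^{m-1}}(q(b_\bullet))$, which depends on matching $\partial I^m/T_1^{m-1}$ with $T_0^{m-1}/\partial T_0^{m-1}$ in the definition of $\OO_B$), and verifying that transporting the Theorem \ref{thota}-corrections through the quotient functors $q_m$ via the relative $\pi_m$-actions leaves the inclusion property — and the lower dimensions — undisturbed at every stage.
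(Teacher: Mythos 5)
Your proof takes essentially the same route as the paper: climb the tower $\TCat(1),\dots,\TCat(n)$, alternately using the boundary property of the total $n$-track algebra to lift (fill) the next dimension of the pre-chain complex, and then Theorem \ref{thota} at each level $m$ (where $\TCat(m)$ is a $\Sigma$-track algebra in dimension $m$) to adjust the top cells so that the obstructions vanish, with $K=K(n)$ at the end. Your version is more explicit than the paper's brief induction—in particular you spell out the identification of the obstruction to filling dimension $m$ with $\OO^{m-1}_{T_0^{m-1}}$ applied one level down, and you keep everything as a single functor into $\TCat(n)^m$ via the quotients $q_m$ rather than passing through the $\TCat(m)$ separately—but the underlying mechanism (boundary property $+$ Theorem \ref{thota}, inductively) is the same; the one place where your phrasing is slightly loose (invoking "the $\pi_m$-action on dimension-$m$ morphisms of $\TCat(n)$" for $m<n$, which is not an axiom of a total $n$-track category) is harmless, since it can be replaced by lifting the corrected morphism through the full quotient $q_m$ and observing that its faces are unchanged because $q_m$ is the identity on $(m-1)$-skeleta.
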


\begin{thm}\label{thdrt}
Let \w{\Tln} be an $\Omega$-algebra of left cubical balls and let \w{A\ubu}
be an $\aC$-coresolution of $Y$ in $\cA$. Then there exists an $n$-th order
chain complex
$$
L:\bZ(+1, -\infty)_\otimes\sp{n}~\to~\cT(n)
$$
which is based on \w[.]{A\ubu} We call $L$ an 
\emph{$n$-th order coresolution} of $Y$ in \w[.]{\cT(n)}
\end{thm}

\begin{proof}
The boundary property shows that there exists a functor
$$
\xymatrix@1{K'(1):\bZ(\infty, -1)_\otimes\sp{1}\ar[r] & \cT(1)}
$$
\noindent which satisfies the inclusion property and which is based on 
\w[.]{A\sbu} Hence by Theorem \ref{thota} we have a first-order chain 
complex \w{K(1)} which is based on \w[.]{A\sbu} Now the boundary property 
shows that there is a functor
$$
\xymatrix@1{K'(2):\bZ(\infty, -1)_\otimes\sp{1}\ar[r] & \cT(2)}
$$
\noindent which satisfies the inclusion property and which based on 
\w[.]{A\sbu} Again the boundary property shows there exists \w[,]{K'(3)} 
so that by Theorem \ref{thota} one obtains \w[.]{K(3)} Inductively, we thus 
have \w[.]{K = K(n)}
\end{proof}

\begin{example}
Let $\cC$ be the Eilenberg-Mac~Lane $\Omega$-mapping algebra. Then 
\emph{minimal} coresolutions \w{A\ubu} of $Y$ are defined in $\cA$ 
and hence we can find an $n$-th order coresolution of $Y$ in \w{\NlnC} 
based on \w[.]{A\ubu} We call
\w{\NlnC} the \emph{algebra of cohomology operations of order} \www[.]{\leq n+1} 
This is an $\Omega$-algebra of left cubical balls. It is convenient to consider 
the dual of \w[,]{\NlnC} which is a $\Sigma$-algebra of left cubical balls and
for which $\aC$ is the category of finitely generated free modules over 
the Steenrod algebra.
\end{example}

\begin{remark}
The main result of~\cite{BSe} computes the algebra of cohomology operations 
of order \www{\leq 2} in terms of a bigraded differential algebra $\dB$ over 
the ring \w[.]{\bZ/p\sp{2}} This leads to the conjecture that the 
algebra of cohomology operations of order \w{\leq n} \wb[,]{n \geq 1} can 
also be described up to equivalence by a bigraded differential algebra 
over \w[.]{\bZ/p\sp{2}}
\end{remark}

%
%
\sect{Higher order \ww{\Ext}-groups}
\label{cext}

In this section we deduce from higher order resolutions the associated 
higher order derived functors, which are higher order \ww{\Ext}-groups. 
We shall see that the $E\sb{n}$-term in the Adams spectral sequence is 
such a higher order \ww{\Ext}-group for \w[.]{n \geq 3}

It is classical that the \w{E\sb{2}} of the Adams spectral sequence is given 
by the `primary' \ww{\Ext}-groups of homological algebra, see~\wref[.]{eqbeg} 
In~\cite{BJSe} we studied the secondary \ww{\Ext}-groups which 
determine \w[.]{E\sb{3}}

\begin{defn}
Let \w{\Tln} \wb{n \geq 1} be a $\Sigma$-algebra of left cubical balls,
so for each \w{m = 1, \dotsc, n} we have the algebra of left $m$-cubical balls
\begin{myeq}
\cT(m) = (\cT(m), \cA, D\sb{m}, \OO\sp{m})
\end{myeq}
\noindent with \w{\aC \subset \cA} and 
\w{D\sb{m}(A, X) =\Hom\sb{\cA}(\Sigma\sp{m} A, X)} for objects $A$ in 
$\aC$ and $X$ in $\cA$. Let \w{A\sbu} be an $\aC$-resolution of 
$X$ in $\cA$ and let
\begin{myeq}\label{eqK}
\xymatrix@1{K:\bZ(\infty, -1)_\otimes\sp{n}\ar[r] & \cT(n)}
\end{myeq}
be an $n$-th order resolution of $X$ based on \w{A\sbu} 
(see Theorem \ref{thrt}). Furthermore,  let $Y$ be another object of $\cA$, 
and consider the diagram in $\cA$:
\begin{myeq}\label{eqdaeg}
\xymatrix{\dotsc\ar[r] & A\sb{r + m + 1}\ar[r] & A\sb{r + m}\ar[r] & 
\dotsc\ar[r] & A\sb{r}\ar[d]^\beta\ar[r]\sp{\delta\sb{r}} & \dotsc\ar[r] & 
A\sb{0}\ar[r] & X\\ 
& & & & Y}
\end{myeq}
The row of the diagram is the $\aC$-resolution \w{A\sbu} of $X$. We 
assume that $\beta$ is a cocycle, that is,
\begin{myeq}
\beta \delta\sb{r + 1} = 0.
\end{myeq}
\noindent Then $\beta$ represents an element \w{\{\beta\}} in the \ww{\Ext}-group
\begin{equation*}
\begin{split}
E\sb{2}\sp{r, 0} &= \Ext\sb{\cA}^r(X, Y)\\
&= \rH^r \Hom\sb{\cA}(A\sbu, Y)\\
&= \Ker \delta\sb{r + 1}\sp{\ast}/\image \delta\sb{r}\sp{\ast},
\end{split}
\end{equation*}
\noindent where
$$
\xymatrix@1{\delta\sb{r}\sp{\ast}:\Hom\sb{\cA}(A\sb{r - 1}, Y)\ar[r] & 
\Hom\sb{\cA}(A\sb{r}, Y)}~.
$$

Using the $\aC$-resolution \w{\Sigma\sp{s} A\sbu} of 
\w[,]{\Sigma\sp{s} X} we get accordingly for \w{s \geq 0} the bigraded 
\ww{\Ext}-group (see~\wref[),]{eqbeg}
$$
E\sb{2}\sp{r, s} = \Ext\sb{\cA}^r(\Sigma\sp{s} X, Y).
$$

In Definition \ref{dobstr} below we shall define a differential
\begin{myeq}
d\sb{2} = d\sb{2}\sp{r, s}:E\sb{2}\sp{r, s}~\to~ E\sb{2}\sp{r + 2, s + 1}~.
\end{myeq}

Moreover, inductively for \w{m \geq 2} we consider subquotients
\w{E\sb{m}\sp{r, s}} of \w[,]{E\sb{2}\sp{r, s}} together with differentials
\begin{myeq}
\xymatrix@1{d\sb{m} = d\sb{m}\sp{r, s}:E\sb{m}\sp{r, s}\ar[r] & 
E\sb{m}\sp{r + m, s + m - 1}}
\end{myeq}
\noindent satisfying \w[,]{d\sb{m} d\sb{m} = 0} and
$$
E\sb{m + 1}\sp{r, s} = \Ker(d\sb{m}\sp{r, s})/
\image(d\sb{m}\sp{r - m, s - m + 1}).
$$

We call \w{E\sb{m}\sp{r, 0}} for \w{m = 2, \dotsc, n + 1} the 
\emph{higher order \ww{\Ext}-groups} associated to the $n$-th order resolution 
$K$ of $X$ above. Replacing $X$ by \w[,]{\Sigma\sp{s} X} we obtain the groups 
\w[,]{E\sb{m}\sp{r, s}} accordingly.
\end{defn}

\begin{defn}\label{dobstr}
Let \w{\overline \beta \in E\sb{m + 1}\sp{r, 0}} be represented by
\w{\{\beta\} \in E\sb{2}\sp{r, 0}} \wb[,]{1 \leq m \leq n} and let $L$ be an
\wwb{m-1}-order chain complex
$$
\xymatrix@1{L:\bZ(\infty, r - 1)_\otimes\sp{m - 1}\ar[r] & \cT(m - 1)}
$$
based on the diagram
$$
\dotsc~\to~ A\sb{r + m + 1}~\dotsc~\to~ A\sb{r + 1}~\to~ A\sb{r}~\xra{\beta}~Y~,
$$
\noindent in $\cA$. Such an $L$ exists precisely because $\beta$ survives to the
\ww{E\sb{m + 1}}-term. We assume also that $L$ restricted to
\w{\bZ(\infty, r)} coincides with the \wwb{m - 1}skeleton of $K$ in 
\w[.]{\cT(m - 1)} The boundary property in \w{\cT(m)} shows that there 
is a functor
$$
\xymatrix@1{\hat L:\bZ(\infty, r - 1)_\otimes\sp{m}\ar[r] & \cT(m)}
$$
\noindent which is a pre-chain complex such that $\hat L$ restricted to
\w{\bZ(\infty, r)} coincides with the $m$-skeleton of $K$, and such that 
the \wwb{m - 1}skeleton \w{\hat L(m - 1)} of $\hat L$ satisfies
\w{q \hat L(m - 1) = L} in \w[.]{\cT(m - 1)} We then obtain the obstruction
$$
\OO \hat L(r + m + 1, \partial I\sb{m + 1}) = \OO\sb{B}(b\sb{1}, \dotsc, 
b\sb{m + 1}) \in \Hom\sb{\cA}(\Sigma\sp{m} A\sb{r + m + 1}, Y)~,
$$
where \w{B = T\sb{0}\sp{m}} and
$$
\hat L(r + m + 1, \partial I\sb{m + 1}) = 
\begin{cases}b\sb{1} = \hat L(r + m + 1, \emptyset \otimes I\sb{m}),\\ 
b\sb{k + 1} = 
\hat L(r + m + 1, I\sb{k} \otimes I\sb{m - 1}), & 1 \leq k \leq m - 1,\\ 
b\sb{m + 1} = \hat L(r + m + 1, I\sb{m} \otimes \emptyset)\end{cases}
$$
\noindent (see~\wref[).]{eqoe} The element
\w{\OO \hat L(r + m + 1, \partial I\sb{m + 1})} represents the value of the
differential
$$
d\sb{m + 1}\sp{r, 0}(\overline \beta) \in E\sb{m + 1}\sp{r + m + 1, m}~.
$$
\end{defn}

\begin{thm}\label{twelldef}
Let \w{\Tln=\NlnC} be the $\Sigma$-algebra of left cubical balls given by 
the complete $\Sigma$-mapping algebra $\cC$ of Example \ref{egspec}. Then 
Definition~\ref{dobstr} yields a well defined sequence of \ww{\Ext}-groups 
\w{E\sb{m}\sp{r, s}} for \w[.]{m = 2, \dotsc, n + 2} These groups depend on 
the weak equivalence class of the $\Sigma$-algebra of left cubical balls 
\w[,]{\Tln} and not on the choice of the $n$-th order resolution of $X$.
\end{thm}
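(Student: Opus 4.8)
The plan is to treat the three claims — that Definition~\ref{dobstr} produces well-defined differentials $d_m$ (hence well-defined subquotients $E_m^{r,s}$, $2\le m\le n+2$), that the resulting groups are independent of the chosen $n$-th order resolution $K$, and that they depend only on the weak equivalence class of $\Tln$ — by one induction on $m$, run in parallel with the inductive construction of Theorem~\ref{thota} and the Resolution Theorems. Throughout, the basic toolkit is the calculus of left cubical balls of Section~\ref{cob} (Complement Rule, Double Rule, Action Formula, naturality and triviality rules) together with the Hauptlemma~\ref{lhaupt}.

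Fix first an $n$-th order resolution $K$ based on an $\aCat$-resolution $A_\bullet$ of $X$, and fix $\overline\beta\in E_{m+1}^{r,0}$ with representative $\beta$. Because $\overline\beta$ lies in the $(m{+}1)$-st page, the differentials $d_2,\dots,d_m$ vanish on it, and, unwinding the recursive definition of the pages, this is exactly the condition that lets the $(m{-}1)$-order chain complex $L$ of Definition~\ref{dobstr} exist; the boundary property of a total $n$-track category then yields the pre-chain complex $\hat L:\bZ(\infty,r-1)_\otimes^m\to\TCat(m)$. I would first check that $\OO\hat L(r+m+1,\partial I_{m+1})\in\Hom_\ACat(\Sigma^m A_{r+m+1},Y)$ is a cocycle and defines a class in the subquotient $E_{m+1}^{r+m+1,m}$. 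The key point is that $\hat L$ restricted to $\bZ(\infty,r)$ is the $m$-skeleton of the chain complex $K$, so all obstructions of $\hat L$ at source levels $\ge r+m+2$ already vanish; feeding this into the naturality rule and bootstrapping through the left-cubical-ball combinatorics of the Hauptlemma shows that the composite of $\OO\hat L(r+m+1,\partial I_{m+1})$ with $\Sigma^m\delta_{r+m+2}$ is trivial, and that the resulting $E_2$-class is annihilated by $d_2,\dots,d_m$ — the auxiliary fillings used in building $L$ serve as the witnesses — so that it survives to $E_{m+1}$.

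Next I would establish independence of the auxiliary choices. Given $L$, the only freedom in $\hat L$ in dimension $m$ whose support reaches below level $r$ is the single datum $\hat L(r+m,I_m)$ (all $\hat L(i,I_m)$ with $i\ge r+m+1$ being forced to agree with $K$); replacing it by $\hat L(r+m,I_m)+\alpha$ with $\alpha\in\Hom_\ACat(\Sigma^m A_{r+m},Y)$ alters the defining $(m{+}1)$-tuple only in its last entry, by $\alpha\circ\Sigma^m\delta_{r+m+1}$, so the Action Formula changes the obstruction by $\epsilon_{m+1}(\Sigma^m\delta_{r+m+1})^{\ast}\alpha$, a coboundary, and the class in $E_{m+1}^{r+m+1,m}$ is unchanged. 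Independence of $L$ for fixed $\beta$ is the homotopy-invariance step: two admissible choices $L,L'$ are linked by an $(m{-}1)$-order homotopy, built dimension by dimension exactly as in the proof of Theorem~\ref{thota} from the boundary property and the actions of the groups $D_j$, and I would show — again via the Action Formula together with naturality, the Complement Rule and the triviality rule — that transporting the obstruction along such a homotopy only adds a coboundary. Changing $\beta$ within its $E_2^{r,0}$-class by a term $\gamma\circ\delta_r$ is absorbed by a compatible modification of $L$. Finally, $d_{m+1}d_{m+1}=0$: applying $d_{m+1}$ to a representative of $d_{m+1}(\overline\beta)$, the pre-chain complex produced for the second differential can be taken to extend the one produced for the first, so the second obstruction is already nullhomotopic — the higher analogue of the classical $d^2=0$ computation — which is what makes the next page $E_{m+2}$ meaningful.

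It remains to pass from one $n$-th order resolution to another and to handle weak equivalences. Any two $\aCat$-resolutions $A_\bullet,A'_\bullet$ of $X$ are chain homotopy equivalent over $\ACat$; lifting such an equivalence order by order — a relative form of Theorem~\ref{thota}, whose successive obstructions lie in the groups $D_m$ and vanish because the equivalence already induces an isomorphism on $E_2$ — yields a morphism of $n$-th order resolutions compatible with the obstruction elements of Definition~\ref{dobstr}, hence an isomorphism of the whole system $(E_m^{r,s},d_m)$, and the usual uniqueness up to higher homotopy of such lifts makes it canonical. For the last assertion, a weak equivalence of complete $\Sigma$-mapping algebras induces compatible weak equivalences of all the function spaces $\Mor_\CCat(X,Y)$, hence an isomorphism of the associated total $n$-track categories $\NlnC$ preserving $\pi_0$, the bifunctors $D_m$, the quotient functors and the obstruction operators $\OO_B$; such an isomorphism carries $n$-th order resolutions to $n$-th order resolutions and matches the obstruction elements, so all the $E_m^{r,s}$ transport isomorphically. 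I expect the main obstacle to be precisely the homotopy-invariance step: controlling how a dimension-$m$ obstruction reacts to a change of strictly lower-dimensional data (the complex $L$, and essentially the same computation for a change of $K$) is the hard combinatorial core, and it is exactly what the Complement Rule, Double Rule, Action Formula and Hauptlemma were engineered to make tractable.
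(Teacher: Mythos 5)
Your approach is genuinely different from the paper's, and it has a gap that the paper itself explicitly flags.

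The paper's proof of Theorem~\ref{twelldef} is short and geometric: it invokes the Strictification Theorem~\ref{thstrict} to replace both $K$ and $L$ by weakly equivalent resolutions $K',L'$ that are $N$-(strict, fibrant, exact) for large $N$. Such resolutions correspond to genuine towers of fibrations in the underlying model category (as in \cite[\S 6.7, 7.1]{BJSe}), so the $E_m$-pages they define are those of an honest spectral sequence; the induced map of spectral sequences is an isomorphism on $E_2$ and hence on every page. All the hard combinatorics are packed into Theorem~\ref{thstrict}, whose proof (via Lemmas~\ref{ltransport}--\ref{lstrictify}) uses the model structure in an essential way.

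You instead try to run the argument purely in the abstract calculus of left cubical balls from Section~\ref{cob}, plus the Hauptlemma~\ref{lhaupt}. The key gap is this: the opening paragraph of the Appendix states explicitly that, for a general $\Sigma$-track algebra $\Tln$, the Section~\ref{cob} rules alone do \emph{not} suffice to prove $d_{m+1}d_{m+1}=0$ or the other well-definedness properties; one needs in addition the \emph{sum rule} and \emph{product rule} that define a complete $\Sigma$-track algebra (Definition~\ref{dcsa}, Remark~\ref{rsigmaTrackAlgebraHigherExtGroups}). The step where you claim ``the pre-chain complex produced for the second differential can be taken to extend the one produced for the first, so the second obstruction is already nullhomotopic'' is precisely where the product rule enters; the detailed $m=1$ computation at the end of the Appendix shows that this hinges on identifying an obstruction over the product ball $B\times C$, which is what the product rule provides. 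Similarly, your independence-of-$L$ step, which you treat as an application of the Action Formula and naturality, actually requires the sum rule to separate the obstruction into a contribution from $L$ and a contribution from the modification. So the combinatorial route is viable in principle, but only if you first establish the sum and product rules for $\NlnC$ (as in Propositions~\ref{psrin} and~\ref{ppr}) and then cite Remark~\ref{rsigmaTrackAlgebraHigherExtGroups}. As written, your argument tries to do without them, and that step will fail. Your treatment of independence of the resolution (``a relative form of Theorem~\ref{thota}'') also understates the difficulty: the paper deliberately routes this through strictification rather than through a direct lift of a chain homotopy equivalence, because controlling the obstruction under a change of $K$ is not a mere application of the Action Formula.
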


\begin{remark}
Some additional assumptions on a $\Sigma$-algebra of left cubical balls 
\w{\Tln} are needed in Theorem \ref{twelldef} in order to guarantee
that the \emph{last} differential \w{d\sb{n+2}:E\sb{n+2}\to E\sb{n+2}} 
still satisfies \w[.]{d\sb{n+2}\circ d\sb{n+2}=0} 
The requisite assumptions are axiomatized in the Appendix, but they are certainly 
satisfied in Example \ref{egspec} (or any other case where we actually have a 
spectral sequence with these differentials).
\end{remark}

\begin{thm}\label{temma}
Let \w{\Tln=\NlnC} be the $\Sigma$-algebra of left cubical balls given by the
dual $\cC$ of the Eilenberg-Mac~Lane $\Omega$-algebra of left cubical balls.
Then the \ww{\Ext}-groups \w{E\sb{m}\sp{r, s}} \wb[,]{2 \leq m \leq n + 2}
yield the $m$-term \w{E\sb{m}} of the Adams spectral sequence which converges
to the stable homotopy set \w{\{Y, X\}} for finite spectra $X$ and $Y$.
\end{thm}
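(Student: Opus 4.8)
The strategy is to realise the Adams spectral sequence as the sequence of higher order $\Ext$-groups attached to one particular $n$-th order resolution, namely the one read off an Adams tower for $X$, and then to invoke Theorem \ref{twelldef} to remove the dependence on choices. Throughout, $\CCat$ denotes the dual of the Eilenberg-Mac~Lane $\Omega$-mapping algebra, so that $\ACat=\pi_{0}\CCat$ has as $\aCat$ the finitely generated free modules over the Steenrod algebra $\dA$, and $H=\rH(\bZ/p)$.

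First I would fix an Adams tower for the finite spectrum $X$: a tower $X=X_{0}\leftarrow X_{1}\leftarrow X_{2}\leftarrow\dotsb$ whose successive cofibres $W_{s}$ are wedges of suspensions of $H$, chosen so that the induced sequence of free $\dA$-modules $H^{\ast}W_{\bullet}$ is a free resolution $A_{\bullet}$ of $H^{\ast}X$. Such a tower exists because $X$ is finite, so $H^{\ast}X$ is a finite $\dA$-module and admits a (minimal) free resolution; this is exactly where the finiteness hypothesis and the passage to the dual mapping algebra are used. The wedges $W_{s}$ and the connecting maps $W_{s}\to\Sigma W_{s+1}$ all lie in $\CCat$, and the iterated cofibre sequences $X_{s+1}\to X_{s}\to W_{s}$ furnish, by the standard inductive choice of nullhomotopies and higher nullhomotopies in a tower, a functor $K\colon\bZ(\infty,-1)_{\otimes}^{n}\to\TCat(n)=\NnC$ with the inclusion property based on $A_{\bullet}$. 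Since every obstruction $\OO K(i,\partial I_{n+1})$ is the class of a genuine composite of maps in the tower, and such a composite is canonically null, $K$ is an $n$-th order resolution of $X$ in the sense of Theorem \ref{thrt}; by Theorem \ref{twelldef} it then suffices to compute the groups $E_{m}^{r,s}$ and the differentials from this particular $K$.

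Next I would match the two sequences term by term. On the $E_{2}$-page the mapping-algebra identity $D_{0}(A,X)=\Hom_{\ACat}(A,X)$, together with the elementary fact that $\pi_{0}$-maps of $\CCat$ between wedges of suspensions of $H$ are precisely $\dA$-module maps, gives
$$
E_{2}^{r,s}=\Ext_{\ACat}^{r}(\Sigma^{s}X,Y)\ \cong\ \Ext_{\dA}^{r,\ast}(H^{\ast}X,H^{\ast}Y),
$$
the internal degree being governed by $s$; this is Adams' $E_{2}$-term, and in the corresponding coordinates the differential $d_{m}\colon E_{m}^{r,s}\to E_{m}^{r+m,s+m-1}$ of Section \ref{cext} is the Adams differential. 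For $2\le m\le n+2$ I would then compare, page by page, the subquotient $E_{m}^{r,s}$ of Section \ref{cext} with the $m$-th page of the Adams spectral sequence obtained by applying $[Y,-]$ to the tower of $X$. The key point is that an element of $E_{m+1}^{r,0}$ is, by Definition \ref{dobstr}, represented by a cocycle $\beta\colon A_{r}\to Y$ admitting an $(m-1)$-st order chain complex $L$ over the tail $A_{r}\leftarrow A_{r+1}\leftarrow\dotsb$ compatible with the $(m-1)$-skeleton of $K$; but such an $L$ is exactly a system of compatible lifts of $\beta$ through $m$ successive stages of the Adams tower, that is, the data witnessing that $[\beta]$ survives to the $E_{m+1}$-page of the Adams spectral sequence. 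Moreover the obstruction $\OO\hat L(r+m+1,\partial I_{m+1})\in\Hom_{\ACat}(\Sigma^{m}A_{r+m+1},Y)$ is, by construction of $\hat L$ from the tower, the homotopy class of the corresponding composite $X_{r+m+1}\to W_{r+m+1}\to\dotsb\to Y$ read off in the exact couple, i.e.\ the Adams $d_{m+1}$-differential on $[\beta]$, which classically is an $(m+1)$-st order cohomology operation (Adams \cite{AdHI}; compare \cite{BJSe} for $m=1$). Running this comparison up the pages, and repeating it with $X$ replaced by $\Sigma^{s}X$, identifies $(E_{m}^{r,s},d_{m})$ with the Adams spectral sequence for all $m\le n+2$. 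Convergence to $\{Y,X\}$ is the classical convergence of the Adams spectral sequence for finite spectra and is quoted, not reproved.

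The hard part will be the middle of this argument: establishing, with full homotopy coherence and uniformly in $m$, the dictionary between ``the $n$-th order chain complex $K$ in $\NnC$ together with the obstruction differentials of Definition \ref{dobstr}'' and ``lifting in the Adams tower and reading off the geometric boundary in the next cofibre''. Classically one knows only that $d_{r}$ in the Adams spectral sequence is given by some $r$-th order operation; here this must be promoted to a statement about the whole total $n$-track structure $\NlnC$ at once, and one must check that the higher tracks extracted from the iterated cofibre sequences of the tower satisfy the gluing condition \wref{egc} and obey the complement, double, action and triviality rules built into the obstruction operator $\OO_{B}$, so that the geometric obstructions literally coincide with the elements $\OO_{B}(b_{1},\dotsc,b_{m+1})$ of Definition \ref{defi:preChainComplex}. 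Once that identification of obstruction operators is in hand, the agreement of the two spectral sequences follows formally, and independence of the chosen Adams tower is exactly Theorem \ref{twelldef}; this yields the theorem.
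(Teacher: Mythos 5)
Your overall strategy is plausible --- build an $n$-th order resolution from an Adams tower and appeal to Theorem~\ref{twelldef} for independence of choices --- but it runs in the opposite direction from the paper's argument and, more importantly, you have correctly identified the crux as ``the hard part'' and then left it unresolved. That middle step --- verifying that the higher tracks you extract from the iterated cofibre sequences of the tower satisfy the gluing condition and that the resulting geometric obstructions literally coincide with the elements $\OO_{B}(b_{1},\dotsc,b_{m+1})$ of Definition~\ref{defi:preChainComplex}, page by page and uniformly in $m$ --- is exactly the content of the theorem, not a detail that ``follows formally'' once stated. As written, your argument reduces the theorem to itself.

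The paper's proof closes this gap by going through the strictification machinery of Section~\ref{cstrict} rather than attempting a direct dictionary. By Theorem~\ref{thstrict} and its dual (Remark~\ref{rstrict}), any $n$-th order coresolution of $X$ is weakly equivalent to an $N$-(strict, fibrant, exact) one for large $N$. In a strict, fibrant, exact coresolution all the higher cubes $L(i,I_{k})$ vanish for $i\leq N$ and there are genuine fibre sequences $Z_{i}\to K_{i}\to Z_{i-1}$ in the underlying model category with $\delta_{i}=j_{i-1}p$; this \emph{is}, on the nose, an Adams fibre tower ($X$-coaugmented sequence) as in \cite[\S 6.7, 7.1]{BJSe}. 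The identification of the obstruction differentials of Definition~\ref{dobstr} with the Adams differentials is then a matter of reading off the strict data, with the $n=1$ case already established in \cite{BJSe} and the higher pages following by the inductive control the strictification gives over $(n-1)$-skeleta (Lemma~\ref{lstrictify}, Transport Lemma~\ref{ltransport}). So the strictification is not just one way to prove well-definedness of $E_{m}^{r,s}$ --- it is precisely the tool that makes the geometric comparison trivial, by normalising the resolution until it literally agrees with the Adams tower rather than trying to prove a ``full homotopy coherence'' dictionary between an arbitrary resolution and a tower. If you want to salvage your direct construction from the tower, the missing step is to observe that the resolution you build from a chosen Adams tower is already $N$-(strict, fibrant, exact), so that the paper's identification with \cite{BJSe} applies to it directly; then Theorem~\ref{twelldef} takes care of independence of the tower, as you intended.
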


For \w{n = 1} this result is proved in \cite[Section 7]{BJSe}.
Theorems \ref{twelldef} and \ref{temma} are proved for all \w{n\geq 1} in the
following Section.

%
%
\sect{Strictification of higher order resolutions}
\label{cstrict}

In this section we show that any higher order resolution is actually equivalent
to a strict resolution, which we call its \emph{strictification}. This is
done by induction: the main difficulty is choosing the correct induction
hypotheses. Otherwise, the proof uses a long chain of standard arguments
in Quillen model categories.

It is well-known how to derive from such a strict resolution the
corresponding (Adams) spectral sequence, thus implying Theorems \ref{twelldef}
and \ref{temma}.

Throughout this section $\cC$ will be a $\Sigma$-mapping algebra
of the type described in Example \ref{egspec}, so the reader may take $\cC$ 
to be \w[.]{\Sp} 

\begin{notn}\label{nahat}
For such a $\cC$, we have the additive category \w[,]{\cA=\pi\sb{0}\cC} and
the full additive subcategory \w{\aC=\pi\sb{0}\{\dX\}} given by the class
of spectra $\dX$ in \ref{egspec}. Let $\haC$ denote the full subcategory 
of $\cA$ consisting of all objects $A$ in  which are isomorphic in $\cA$ to an 
object in $\aC$, with
\begin{myeq}\label{eqfs}
\aC~\subseteq~\haC ~\subseteq~\cA~.
\end{myeq}
\end{notn}

\begin{defn}
Let $\cT$ be an $n$-graded category (such as \w{\NnC} or  $\nnC$)
with a quotient functor \w[.]{q:\cT\sp{0}\to\cA} Let
\w{K,L:\bZ(\infty, -1)\sb{\otimes}\sp{n}\to\cT} be functors of $n$-graded 
categories. A \emph{weak equivalence} $\tau:K\to L$ over an object 
\w{X\in\cT\sp{0}} is a natural transformation $\tau$ which for objects 
$i$ in \w{\bZO} consists of a map
$$
\tau\sb{i}:K\sb{i}\to L\sb{i} \hsm\text{in}\hsm \cT\sp{0}
$$
\noindent which induces an isomorphism \w{q\tau\sb{i}} in $\cA$.
For \w[,]{i=-1} the map \w{\tau\sb{-1}:K\sb{-1}=X=L\sb{-1}} is the identity of $X$.
For a morphism \w{V:i\to j} in \w[,]{\bZO} we have the commutative
diagram in $\cT$:
$$
\xymatrix{
K\sb{i} \ar[r]\sp{\tau\sb{i}}\ar[d]\sb{K(V)}  & L\sb{i} \ar[d]\sp{L(V)}  \\
K\sb{j} \ar[r]\sp{\tau\sb{j}}  & L\sb{j}
}
$$
\noindent or equivalently, \w[.]{\tau\sb{j}K(V)=L(V)\tau\sb{i}}
Let $\sim$ be the equivalence relation generated by weak equivalences over $X$.
\end{defn}

\begin{lemma}
Let \w{K,L:\bZ(\infty, -1)\sb{\otimes}\sp{n}\to\NnC} be $n$-th order
resolutions of $X$. If \w{K\sim L} are weakly equivalent over $X$, then
the higher \ww{\Ext}-groups defined by $K$ and $L$ are
isomorphic.\hfill$\Box$
\end{lemma}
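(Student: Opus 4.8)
The plan is to reduce to a single weak equivalence $\tau\colon K\to L$ over $X$, to show that such a $\tau$ induces isomorphisms on all the groups $E^{r,s}_{m}$ which are compatible with the differentials $d_{m}$, and then to obtain the general statement by composing the isomorphisms attached to the weak equivalences in a connecting zig-zag (recall that $\sim$ is the equivalence relation generated by weak equivalences over $X$).

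First I would analyse $\tau$ in dimension $0$. Write $A_{\bullet}$ and $A'_{\bullet}$ for the $\aCat$-resolutions of $X$ on which $K$ and $L$ are based. The components $\tau_{i}\colon K_{i}\to L_{i}$ are $0$-morphisms, and applying the quotient functor $q$ of the $n$-track category yields morphisms $q\tau_{i}\colon A_{i}\to A'_{i}$ in $\ACat$; by hypothesis each $q\tau_{i}$ is an isomorphism, and $q\tau_{-1}=\id_{X}$. Naturality of $\tau$ on the morphisms $(i,\emptyset)$ of $\bZO$ gives $q\tau_{i-1}\circ\delta^{K}_{i}=\delta^{L}_{i}\circ q\tau_{i}$, so $(q\tau_{i})_{i}$ is an isomorphism of chain complexes $A_{\bullet}\xra{\cong}A'_{\bullet}$ over $X$ in $\ACat$. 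Since $\Sigma$ is additive the same holds after applying $\Sigma^{s}$; applying $\Hom_{\ACat}(-,Y)$ turns this into an isomorphism of cochain complexes, and passing to cohomology (see~\eqref{eqbeg}) gives canonical isomorphisms $\tau_{\ast}\colon E^{r,s}_{2}(K)\xra{\cong}E^{r,s}_{2}(L)$ for all $r,s$.

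Next I would prove, by induction on $m$, that $\tau_{\ast}$ induces isomorphisms $E^{r,s}_{m}(K)\cong E^{r,s}_{m}(L)$ which intertwine $d_{m}$, so that the isomorphism descends to the subquotients $E^{r,s}_{m+1}$. The base case $m=2$ is the previous paragraph. For the step $m\Rightarrow m+1$ one first gets the isomorphism on $E^{r,s}_{m+1}$ from the fact that $\tau_{\ast}$ commutes with $d_{m}$ (inductive hypothesis), and then must check that $\tau_{\ast}$ commutes with $d_{m+1}$; for this I would unwind Definition~\ref{dobstr}. A class $\overline\beta\in E^{r,0}_{m+1}(K)$ is represented by a cocycle $\beta\colon A_{r}\to Y$, and $d^{r,0}_{m+1}(\overline\beta)$ is the cohomology class of the cochain $\OO\hat L(r+m+1,\partial I_{m+1})=\OO_{B}(b_{1},\dots,b_{m+1})\in\Hom_{\ACat}(\Sigma^{m}A_{r+m+1},Y)$, attached to an auxiliary $(m-1)$-order chain complex and a pre-chain-complex lift $\hat L$ into $\TCat(m)$ extending the $m$-skeleton of $K$, with $B=T_{0}^{m}$. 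The crux is that $\tau$ transports this auxiliary data for $K$ to admissible auxiliary data for $L$ computing $d^{r,0}_{m+1}(\tau_{\ast}\overline\beta)$: the relevant skeleta of $K$ are matched with those of $L$ through the naturality squares of $\tau$, the cocycle $\beta$ corresponds to $\beta\circ(q\tau_{r})^{-1}$, and the boundary property in $\TCat(m)$ together with fullness of $q$ produces a lift $\hat L'$ on the $L$-side compatible with $\hat L$ under $\tau$. Then $b'_{i}\circ\tau_{r+m+1}=b_{i}$ for all $i$, so the naturality rule (item~(4) of Definition~\ref{dntc}) gives $\OO_{B}(b_{1},\dots,b_{m+1})=(q\tau_{r+m+1})^{\ast}\OO_{B}(b'_{1},\dots,b'_{m+1})$; since $q\tau_{r+m+1}$ is an isomorphism in $\ACat$, the two cochains correspond under the cochain isomorphism realizing $\tau_{\ast}$, hence so do their classes in $E^{r+m+1,m}_{2}$ and, using the already-established isomorphism at that bidegree, in $E^{r+m+1,m}_{m+1}$. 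Together with the well-definedness of $d_{m+1}$ on a fixed resolution (independence of the auxiliary choices, as in Section~\ref{cext}), this shows $d_{m+1}\circ\tau_{\ast}=\tau_{\ast}\circ d_{m+1}$; the induction runs through $m\le n+1$, giving $E^{r,s}_{m}(K)\cong E^{r,s}_{m}(L)$ for $2\le m\le n+2$.

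\textbf{Main obstacle.}
The delicate point is exactly the transport of the obstruction-theoretic data in the inductive step: one must verify that the $\tau$-image of a valid auxiliary $(m-1)$-order chain complex and lift $\hat L$ for the $K$-side computation is again valid on the $L$-side --- in particular that it still satisfies the inclusion property and restricts on $\bZ(\infty,r)$ to the $(m-1)$-skeleton of $L$ --- and that the resulting tuples $(b_{1},\dots,b_{m+1})$ and $(b'_{1},\dots,b'_{m+1})$ really are related by pre- and post-composition with $0$-morphisms coming from $\tau$, so that the naturality rule applies on the nose. Here the strictification of higher order resolutions developed in this section is the natural tool for discharging these track-level compatibilities; the dimension-$0$ analysis, the passage to subquotients, and the reduction from $\sim$ to a single weak equivalence are then purely formal.
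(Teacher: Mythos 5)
The paper states this lemma without proof, so I am assessing your argument on its own. The overall scaffolding is sound: reduce to a single weak equivalence $\tau\colon K\to L$, observe that $q\tau$ is a chain isomorphism $A_{\bullet}\xra{\cong}A'_{\bullet}$ over $X$ inducing an isomorphism of $E_{2}$-terms, and then check by induction on $m$ that this isomorphism intertwines the differentials $d_{m}$ of Definition~\ref{dobstr}, with the naturality rule of Definition~\ref{dntc}(4) doing the work. The final formula you write, $\OO_{B}(b_{1},\dots,b_{m+1})=(q\tau_{r+m+1})^{\ast}\OO_{B}(b'_{1},\dots,b'_{m+1})$, is exactly the right compatibility, and your appeal to the well-definedness of $d_{m+1}$ (independence of the auxiliary $L,\hat L$) to finish the step is correct.

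However, the \emph{direction} of transport in your inductive step is wrong and this is a genuine (if fixable) gap. You propose to take ``the $\tau$-image of a valid auxiliary $(m-1)$-order chain complex and lift $\hat L$ for the $K$-side'' and show it is valid on the $L$-side --- a pushforward along $\tau$. But $\tau_{i}$ is only invertible after applying $q$ to $\ACat$; it is not invertible as a $0$-morphism in $\TCat^{0}$, so no such pushforward of tracks exists. The argument must run the other way: choose the auxiliary data $L^{\mathrm{aux}},\hat L'$ for the \emph{$L$-side} (using the boundary property, as you note), and define $K$-side data by $\hat L''(V):=\hat L'(V)\circ\tau_{i}$ for morphisms $V\colon i\to r-1$, keeping the $m$-skeleton of $K$ on $\bZ(\infty,r)$. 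Functoriality and the inclusion property for $\hat L''$ then follow from the naturality square $\tau_{j}K(V)=L(V)\tau_{i}$ together with \eqref{eqfacemaps}, and one gets $b_{i}=b'_{i}\circ\tau_{r+m+1}$ on the nose --- your displayed relation, but now actually constructible. Notice also that your appeal to strictification for discharging these compatibilities is misdirected: strictification is what the paper uses for the stronger independence-of-resolution statement (Theorem~\ref{twelldef}), where no weak equivalence is given in advance; for the present lemma the naturality of $\tau$, the naturality rule for $\OO_{B}$, and well-definedness of $d_{m+1}$ for a fixed resolution already suffice, and invoking strictification obscures where the real content lies.
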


We shall show that the higher \ww{\Ext}-groups actually do not depend on the
choice of resolution of $X$. For this, we use the strictification of
resolutions.

\begin{defn}
Let \w{K:\bZ(\infty, -1)\sb{\otimes}\sp{n}\to\cT} be a functor of
$n$-graded categories, and \w[.]{N\geq 0}  We say that $K$ is $N$-\emph{strict}
if for all \w{i\leq N} and \w{k=1,\dotsc,n} we have \w[.]{K(i,I\sb{k})=o} 
In this case \w{\delta\sb{i}=K(i,\emptyset):K\sb{i}\to K\sb{i-1}} yields 
a sequence of maps 
$$
K\sb{N}~\xrightarrow{\delta}~K\sb{N-1}~\xrightarrow{\delta}~
\dotsc~\xrightarrow{\delta}~K\sb{0}~\xrightarrow{\delta}~K\sb{-1}
$$
\noindent in \w[,]{\cT\sp{0}} with \w{K\sb{-1}=X} and \w[.]{\delta\delta=o} 

When $\cT$ is \w{\NnC} or \w{\nnC} for a topologically enriched model 
category $\cC$ as above, we say that $K$ as above is $N$-\emph{fibrant} 
if there are fiber sequences
$$
Z\sb{i}~\xrightarrow{j\sb{i}}~K\sb{i}~\stackrel{p\sb{i}}{\epic}~Z\sb{i-1}
\hspace*{9mm} p\sb{i}\circ j\sb{i}=o~,
$$
\noindent in the model category $\cC$ with \w{\delta\sb{i}=j\sb{i-1}p\sb{i}} 
for each \w[,]{0\leq i<N} and \w{\delta\sb{N}} admits a factorization
$$
K\sb{N}~\xrightarrow{\ovp\sb{N}}~Z\sb{N-1}~
\xrightarrow{j\sb{N-1}}~K\sb{N-1}~.
$$
\noindent Note that \w{p\sb{i}:K\sb{i}\to Z\sb{i-1}} above is a fibration, 
but \w[,]{\ovp\sb{N}:K\sb{N}\to Z\sb{N-1}} need not be a fibration. 
More generally, throughout this section we use $p$ for fibrations, while 
$\ovp$ need not be a fibration.

Moreover, $K$ is $N$-\emph{exact} if for each \w{0\leq i<N} and $A$ in $\aC$, 
the induced sequence
\begin{myeq}\label{eqseshtpy}
\Hom\sb{\cA}(A,Z\sb{i})~\to~\Hom\sb{\cA}(A,K\sb{i})~\to~\Hom\sb{\cA}(A,Z\sb{i-1})
\end{myeq}
\noindent is a short exact sequence of abelian groups.
\end{defn}

\begin{thm}\label{thstrict}
Let \w{n\geq 1} and \w[,]{N\geq 0} and let
\w{K:\bZ(\infty, -1)\sb{\otimes}\sp{n}\to\NnC} be an $n$-th order resolution
of $X$ based on the $\aC$-resolution \w{A\sbu}
of $X$ in $\cA$. Then there exists an $N$-strict $N$-fibrant $N$-exact
$n$-th order resolution $L$ of $X$ based on an $\haC$-resolution
\w{\wA\sbu} of $X$ in $\cA$ such that \w{L\sim K} are
weakly equivalent over $X$.
\end{thm}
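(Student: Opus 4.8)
The plan is to build the required $L$ by induction on $N$, rigidifying the coherence data of $K$ one homological degree at a time using the model category underlying $\CCat$ (Example~\ref{egspec}), and then to compare $L$ with $K$ at the very end. The base case $N=0$ is essentially empty: $0$-strictness and $0$-exactness impose no condition (the morphisms $K(i,I_k)$ in question are not defined for $i\le 0$), and $0$-fibrancy holds for $K$ itself on setting $Z_{-1}:=X=K_{-1}$, $j_{-1}:=\id_X$ and $\bar p:=K(0,\emptyset)$. For the inductive step, suppose we have an $N$-(strict, fibrant, exact) $n$-th order resolution $L$ of $X$, based on an $\hat{\aCat}$-resolution $\hat{A}_{\bullet}$; I construct the $(N+1)$-version.

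First I would rigidify level $N$. By $N$-fibrancy there is a map $\bar p\colon L_N\to Z_{N-1}$ with $\delta_N^{L}=j_{N-1}\bar p$. Factor $\bar p$ in the model category as an acyclic cofibration $\tau\colon L_N\xsim\tilde L_N$ followed by a fibration $\tilde p\colon\tilde L_N\to Z_{N-1}$, and set $Z_N:=\operatorname{fib}(\tilde p)$ with fibre inclusion $j_N$, so that $\tilde p\,j_N=o$. Since every $L_j$ is fibrant and cofibrant, $\tau^{\ast}\colon\Mor_{\CCat}(\tilde L_N,L_j)\to\Mor_{\CCat}(L_N,L_j)$ is an acyclic fibration of left cubical sets, so all morphisms and higher tracks of $L$ with source $L_N$ lift coherently through $\tau^{\ast}$; this transports the $\NnC$-functor structure to a functor $L'$ with $L'_j=L_j$ for $j\neq N$, $L'_N=\tilde L_N$ and $\delta_N^{L'}=j_{N-1}\tilde p$, together with a weak equivalence $L\to L'$ over $X$ that is the identity on all objects other than $L_N$. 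By the naturality rule for $\OO$, $L'$ is again an $n$-th order resolution, now based on the $\hat{\aCat}$-resolution $\hat{A}'_{\bullet}:=qL'$, whose objects are isomorphic in $\ACat$ to those of $\hat{A}_{\bullet}$ — which is exactly why one must allow $\hat{\aCat}$ rather than $\aCat$ in the statement.

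Next I would rigidify level $N+1$ and restore the chain-complex property. As $\hat{A}'_{\bullet}$ is a chain complex, $\tilde p\,\delta_{N+1}^{L'}$ is null in $\ACat$, so by the homotopy lifting property of $\tilde p$ (and using $\Hom_{\ACat}(-,-)=\pi_0\Mor_{\CCat}(-,-)$ to realise an $\ACat$-level lift) I may replace $\delta_{N+1}^{L'}$ by a homotopic map factoring strictly as $L'_{N+1}\xrightarrow{\bar p_{N+1}}Z_N\xrightarrow{j_N}\tilde L_N$; then $\tilde p\,j_N=o$ forces $\delta_N^{L'}\delta_{N+1}^{L'}=o$ on the nose, and more generally I reset the track data $L'(N+1,I_k)$, $1\le k\le n$, to zero morphisms. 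This reset changes the obstruction elements $\OO_{B}\,L'(i,\partial I_{n+1})$ for $i\ge N+2$, so I then re-run the inductive step of the Resolution Theorem (Theorem~\ref{thota}, via Hauptlemma~\ref{lhaupt}) above degree $N+1$: the surjectivity of the maps $(\delta_i)_{\ast}$ furnished by the resolution $\hat{A}'_{\bullet}$ yields correction elements $\alpha_i\in D(L'_i,L'_{i-n-2})$ for $i\ge N+2$ that restore the obstruction property, so $L'$ becomes an honest $n$-th order chain complex which is $(N+1)$-strict and agrees with $L$ in degrees $\le N$. For $(N+1)$-exactness I would apply $\pi_{\ast}\Mor_{\CCat}(A,-)$, $A\in\aCat$, to the fibre sequence $Z_N\to L'_N\to Z_{N-1}$: by the mapping-algebra identity $\pi_{\ast}\Mor_{\CCat}(A,-)=\Hom_{\ACat}(\Sigma^{\ast}A,-)=D_{\ast}(A,-)$ this is a long exact sequence, and the inductive description of $Z_{N-1}$ identifies $D_0(A,Z_{N-1})$ and the connecting homomorphism with a kernel/image term of the complex $\Hom_{\ACat}(A,\hat{A}'_{\bullet})$; exactness of the latter forces $D_1(A,Z_{N-1})\to D_0(A,Z_N)$ to vanish and $D_0(A,L'_N)\to D_0(A,Z_{N-1})$ to be onto, i.e.\ the required short exact sequence. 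Together with the fibre sequences below level $N$ (untouched in the first step) this gives $(N+1)$-exactness, completing the induction. Finally, that the resulting $L$ satisfies $L\sim K$ over $X$ follows from the comparison of $n$-th order resolutions of $X$: one builds a weak equivalence (or zig-zag) between them degreewise, the obstructions to each extension lying in the appropriate $D$-groups and vanishing because both diagrams are resolutions, precisely as in Section~\ref{cisrt}.

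The hardest part is the second step: coordinating the model-category rectification (honest fibres, the strict factorisation of $\delta_{N+1}$, the replacement of $L_N$) with the combinatorial $n$-track data, so that the final $L$ is simultaneously an $(N+1)$-strict, fibrant and exact diagram and a genuine $n$-th order resolution weakly equivalent to $K$ over $X$. This is where the left cubical ball calculus of Section~\ref{cob} — the complement, double, naturality and triviality rules together with the effective, transitive action of $D$ on $n$-tracks — has to be invoked once more, now to control how the obstruction elements propagate under all of these simultaneous choices.
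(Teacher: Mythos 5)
Your proposal is close to the paper's argument in the $n=1$ case, but it contains a genuine gap for $n\geq 2$, precisely where the proposal deviates from the paper's structure.

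The critical misstep is in the sentence ``\ldots and more generally I reset the track data $L'(N+1,I_k)$, $1\le k\le n$, to zero morphisms,'' followed by ``re-run the inductive step of the Resolution Theorem (Theorem~\ref{thota}, via Hauptlemma~\ref{lhaupt}) above degree $N+1$.'' Resetting the lower-dimensional tracks $L'(N+1,I_k)$ for $1\leq k<n$ is not a harmless local change: the inclusion property (Definition~\ref{defi:preChainComplex}) forces the faces of $L'(i,I_{k+1})$ for $i>N+1$ to be identified with composites such as $L'(N+1,I_k)\,\delta_{i}\cdots$ whenever $i-s-1=N+1$, so once you alter $L'(N+1,I_k)$ you have broken the pre-chain-complex structure of $L'$ in dimension $k$ at all levels $i\geq N+2$. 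Theorem~\ref{thota} can only repair the top dimension $n$ --- its inductive step~\wref{eqpisrto} adjusts the $n$-tracks $K'(i,I_n)$ by correction elements $\alpha_i$ and leaves the $(n-1)$-skeleton fixed --- so you cannot reconstruct a legitimate pre-chain complex by invoking it after the reset. You would need to rebuild the $k$-dimensional data at all levels $i\ge N+2$ for every $1\le k<n$, and nothing in your proposal addresses this.

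The paper sidesteps this precisely by not inducting on $N$ for all $n$ at once. For $n\ge 2$ it inducts on $n$: the inductive hypothesis strictifies the $(n-1)$-skeleton $K^{(n-1)}$ completely (so the entire lower-dimensional structure is frozen in a strict form), Lemma~\ref{lquotient} lifts the resulting weak equivalence from $\NnC$ to $\nnC$, the Transport Lemma~\ref{ltransport} then uniquely lifts this to a weak equivalence of the full $n$-th order resolutions, and finally Lemma~\ref{lstrictify} strictifies the top dimension $n$ alone --- which is safe because only $n$-tracks are being modified, exactly the situation Theorem~\ref{thota}'s machinery handles. Your direct induction on $N$ mixes these stages and hence runs into the compatibility problem above. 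A secondary issue: you propose to establish $L\sim K$ at the very end by arguing abstractly that any two resolutions are weakly equivalent, but that statement is itself (essentially) the content of Theorem~\ref{twelldef}, which the paper derives \emph{from} Theorem~\ref{thstrict}; the paper instead builds the weak equivalence $L\to R\leftarrow K$ explicitly during the inductive step via the cylinder diagrams~\wref{eqta} and~\wref[.]{eqrc}
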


\begin{remark}\label{rstrict}
The dual of Theorem \ref{thstrict} holds for coresolutions.
\end{remark}

\begin{notn}
Here we use the bigger category $\haC$ of \S \ref{nahat}.
The resolutions \w{A\sbu} and \w{\wA\sbu} yield by the
weak equivalence \w{L\sim K} over $X$ the commutative diagram in $\cA$:
$$
\xymatrix{
\dotsc \ar[r]& A\sb{1} \ar[r]\sp{\delta}\ar[d]\sb{\cong}  &
A\sb{0} \ar[d]\sp{\cong} \ar[r]\sp{\delta} & X \ar[d]\sp{=} \\
\dotsc \ar[r]& \wA\sb{1} \ar[r]\sp{\delta} &
\wA\sb{0} \ar[r]\sp{\delta} & X~,
}
$$
\noindent where the vertical arrows are isomorphisms in $\cA$ and we have
\w{A\sb{i}=K\sb{i}} and \w{\wA\sb{i}=L\sb{i}} for \w[.]{i\geq -1}

A $\Sigma$-mapping algebra $\cC$ is given by an underlying model category
and cubes
$$
I\sp{n}~\lra~ \Mor\sb{\cC}(X,Y)~,
$$
\noindent having an adjoint
\begin{myeq}\label{eqhs}
(I\sp{n}\times X)/(I\sp{n}\times \ast)~\lra~Y~,
\end{myeq}
\noindent where \w{(I\times X)/(I\times \ast)} is the \emph{pointed cylinder} 
on $X$.
\end{notn}

\begin{lemma}\label{lnone}
Theorem \ref{thstrict} holds for \w[.]{n=1}
\end{lemma}

\begin{proof}
By induction on \w{m\geq 0} we construct a sequence of first-order resolutions
\w{L\sp{(m)}} of $X$ based on \w[,]{A\sbu} such that:

\begin{enumerate}
\renewcommand{\labelenumi}{(\roman{enumi})~}
\item \w{L\sp{(0)}} is the original first-order resolution $K$.
\item For each \w[,]{m\geq 1} \w{L\sp{(m)}} is $m$-strict, $m$-fibrant, $m$-exact
and weakly equivalent over $X$ to \w{L\sp{(m-1)}} (and thus by induction to $K$).
\item For each \w[,]{m\geq 0} \w{L\sp{(m+1)}\sb{i}=L\sp{(m)}\sb{i}} for 
all \w{i\neq m} (so each new approximation differs from the previous 
one in a single dimension). 
\end{enumerate}

Assuming by induction that for some \w[,]{N\geq 0} we have such a $N$-strict, 
$N$-fibrant, and $n$-exact \w[,]{K':=L\sp{(N)}}  we construct 
\w{L':=L\sp{(N+1)}} as follows:

By assumption, we have a factorization of 
\w{\delta\sb{N-1}:K'\sb{N-1}\to K'\sb{N-2}} as
$$
K'\sb{N-1}~\stackrel{p\sb{N-1}}{\epic}~Z\sb{N-2}~
\xrightarrow{j\sb{N-2}}~K'\sb{N-2}~,
$$
\noindent with \w{Z\sb{N-1}} the fiber of the fibration \w{p\sb{N-1}}
(and thus the kernel of \w[,]{\delta\sb{N-1}} since \w{j\sb{N-2}} is monic).
Because \w{K'} is $N$-strict, \w[,]{\delta\sb{N-1}\circ\delta\sb{N}=o} so
\w{\delta\sb{N}:K'\sb{N}\to K'\sb{N-1}} factors through 
\w{Z\sb{N-1}=\Ker\delta\sb{N-1}} as 
$$
K'\sb{N}~\xra{\ovp\sb{N}}~Z\sb{N-1}~\xrightarrow{j\sb{N-1}}~K'\sb{N-1}~,
$$
\noindent and in the model category $\cC$ we may factor \w{\ovp\sb{N}} as
\begin{myeq}\label{eqfact}
\xymatrix{K'\sb{N}\hspace*{2mm} 
\ar@{>->}[r]\sp{i\sb{N}}\sb{\simeq} & ~L'\sb{N}~\ar@{->>}[r]\sp<<<<{p\sb{N}} &
~Z\sb{N-1}
}
\end{myeq}
\noindent where \w{p\sb{N}} is a fibration and \w{i\sb{N}} is a trivial 
cofibration. 

Hence we have a commuting diagram
\begin{myeq}\label{eqhtpy}
\xymatrix{
& K'\sb{N+1} \ar[d]\sb{\delta\sb{N+1}} \ar@/^{4pc}/[dddd]\sp{o} & \\
L'\sb{N}\ar@{->>}[dd]\sb{p\sb{N}}  & 
K'\sb{N} \ar[l]\sb{i\sb{N}}\sp{\sim}\ar[dd]\sb{\ovp\sb{N}} & \\
& \ar@{=>}[r]\sp{H} & \\
Z\sb{N-1}\ar[d]\sb{j\sb{N-1}}  & Z\sb{N-1} \ar@{=}[l] \ar[d]\sb{j\sb{N-1}} & \\
L'\sb{N-1} & K'\sb{N-1} \ar@{=}[l] &
}
\end{myeq}
\noindent where we set \w{L'\sb{i}:=K'\sb{i}} for all \w[,]{i<N} and \w{L'\sb{N}} 
is defined by \wref[.]{eqfact}  In fact, we shall also set 
\w{L'\sb{i}:=K'\sb{i}} for all \w[,]{i>N} by (iii), so \w{L'} differs from 
\w{K'} only at the $N$-th slot.

Since \w[,]{\delta\sb{N-1}\circ\delta\sb{N}=o} 
\w[,]{\delta\sb{N-1}=j\sb{N-2}\circ p\sb{N-1}} and \w{j\sb{N-2}} is monic,
in fact \w[,]{p\sb{N-1}\circ\delta\sb{N}=o} so \w{p\sb{N-1}H} 
is a self-nullhomotopy \w[,]{o\Rightarrow o:K'\sb{N+1}\to Z\sb{N-2}} 
and thus induces a map \w[,]{\alpha:\Sigma K'\sb{N+1}\to Z\sb{N-2}} 
with \w{j\sb{N-2}\alpha:\Sigma K'\sb{N+1}\to K'\sb{N-2}} being (the adjoint of)
the obstruction element \w{\OO K'(N+1, \partial I\sb{2})} of \S \ref{dnocc}
(for \w[),]{n=1} so it is nullhomotopic.
Therefore, by \wref{eqseshtpy} in the definition of $N$-exactness, 
$\alpha$ itself has a nullhomotopy
\w[.]{G:\alpha\Rightarrow o:\Sigma K'\sb{N+1}\to Z\sb{N-2}}

Expressing $G$ and $H$ in terms of maps out of products with cubes 
(or cubical horns) we obtain the solid commutative diagram of 
Figure \ref{fig4}, in which \w{H:\delta\sb{N}\circ\delta\sb{N+1}\Rightarrow o} 
is extended from the left vertical map in the upper horn by constant
homotopies along the horizontal edges of the square (with \w{\id\sb{f}} 
abbreviated to $f$).  When the resulting map is post-composed with the fibration
\w[,]{p\sb{N-1}} we can fill in the square by the homotopy 
of homotopies \w[,]{G:\alpha\Rightarrow o} with the maps 
\w{K'\sb{N+1}\times I\to Z\sb{N-1}}indicated along the edges of the square.

%
%
\begin{figure}[htbp]
\begin{picture}(190,140)(0,0)
%
%
\put(-3,100){$K'\sb{N+1}~\times$}
\put(40,90){\line(1,0){30}}
\put(40,90){\line(0,1){30}}
\put(40,120){\line(1,0){30}}
\put(50,106){\vector(1,0){105}}
\put(115,108){{\scriptsize $H$}}
\bezier{400}(70,125)(100,140)(150,115)
\put(115,130){{\scriptsize $\delta\sb{N}\circ\delta\sb{N+1}$}}
\put(150,115){\vector(3,-1){10}}
\bezier{400}(70,85)(100,70)(150,95)
\put(105,83){{\scriptsize $o$}}
\put(150,95){\vector(3,1){10}}
\put(57.5,80){\oval(5,5)[t]}
\put(55,80){\vector(0,-1){32}}
\put(0,60){{\scriptsize $\id\sb{K'\sb{N+1}}\times\inc$}}
\put(60,60){{\scriptsize $\simeq$}}
%
%
\put(-3,20){$K'\sb{n+1}~\times$}
\put(40,10){\line(1,0){30}}
\put(40,10){\line(0,1){30}}
\put(70,10){\line(0,1){30}}
\put(40,40){\line(1,0){30}}
%
%
\multiput(66,12)(2,2){2}{\circle*{0.5}}
\multiput(62,12)(2,2){4}{\circle*{0.5}}
\multiput(58,12)(2,2){6}{\circle*{0.5}}
\multiput(54,12)(2,2){8}{\circle*{0.5}}
\multiput(50,12)(2,2){10}{\circle*{0.5}}
\multiput(46,12)(2,2){12}{\circle*{0.5}}
\multiput(42,12)(2,2){14}{\circle*{0.5}}
\multiput(42,16)(2,2){2}{\circle*{0.5}}
\multiput(50,24)(2,2){8}{\circle*{0.5}}
\multiput(48,26)(2,2){7}{\circle*{0.5}}
\multiput(46,28)(2,2){6}{\circle*{0.5}}
\multiput(42,28)(2,2){6}{\circle*{0.5}}
\multiput(42,32)(2,2){4}{\circle*{0.5}}
\multiput(42,36)(2,2){2}{\circle*{0.5}}
\put(42,21){{\scriptsize $\alpha$}}
\put(55,42){{\scriptsize $o$}}
\put(55,4){{\scriptsize $o$}}
\put(72,21){{\scriptsize $o$}}
\put(80,26){\vector(1,0){75}}
\put(115,17){{\scriptsize $G$}}
\multiput(77,33)(3,2){28}{\circle*{1.0}}
\put(160,88){\vector(3,2){5}}
\put(120,52){{\scriptsize $\widehat{G}$}}
%
%
\put(160,100){$K'\sb{N-1}$}
\put(176,95){\vector(0,-1){60}}
\put(176,92){\vector(0,-1){60}}
\put(180,60){{\scriptsize $p\sb{N-1}$}}
\put(160,20){$Z\sb{N-2}$}
\end{picture}

\caption[fig4]{Lifting nullhomotopies}
\label{fig4}
\end{figure}
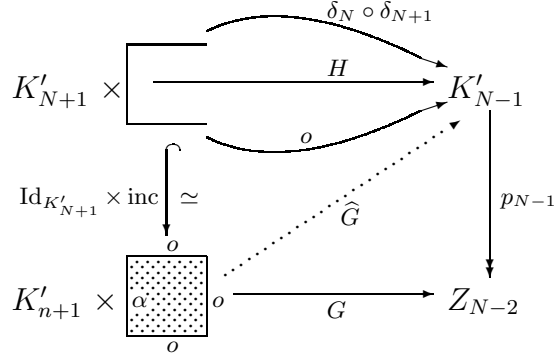

Since the left vertical inclusion is a trivial cofibration, we can use the 
lifting property in the model category $\cC$ to obtain $\widehat{G}$, whose
restriction to the right vertical edge of the lower square yields a new
nullhomotopy \w{H':K'\sb{N+1}\to K'\sb{N-1}} of 
\w[,]{\delta\sb{N}\circ\delta\sb{N+1}} which is homotopic to $H$ and 
thus represents the same track. Since \w[,]{p\sb{N-1}H'=\id\sb{o}} \w{H'} 
factors through the kernel \w{Z\sb{N-1}} of \w{p\sb{N-1}} as 
\w[.]{H'=j\sb{N-1}\ovH}

We then obtain the following diagram, where \w{IK'\sb{N+1}} is the cylinder,
with inclusions \w{i\sb{0}} and \w{i\sb{1}} (setting 
\w[).]{L'\sb{N+1}:=K'\sb{N+1}}
\begin{myeq}\label{eqth}
\xymatrix{
& L'\sb{N+1}~ \ar@{.>}[d]\sp{\delta\sb{N+1}\sp{L'}} 
\ar@/_{4pc}/[dd]\sb{o}
\ar@{^{(}->}[r]\sp{i\sb{0}}\sb{\simeq} &
IK'\sb{N+1} \ar[ld]^<<<{\widehat{H}} &
~K'\sb{N+1} \ar[d] \ar@/^{4pc}/[dd]\sp{o} 
\ar@{_{(}->}[l]\sb{i\sb{1}}\sp{\simeq} & \\
& L'\sb{N}\ar@{->>}[d]\sp{p\sb{N}\sp{L'}} \ar@{=}[l] & &
~K'\sb{N} \ar[d] \ar@{_{(}->}[ll]\sb{i\sb{N}}\sp{\simeq} \ar@{=>}[r]\sp{\ovH}& \\
& Z\sb{N-1} &&  Z\sb{N-1} \ar@{=}[ll] &
}
\end{myeq}
Here $\widehat{H}$ is a lift of $\ovH$ through \w[,]{p\sb{N}} so that
the diagram commutes with \w[.]{p\sb{N}\sp{L'}\widehat{H}i\sb{0}=o} Hence for
\w[,]{\delta\sb{N+1}\sp{L'}:=\widehat{H}i\sb{0}} the left hand side 
is \wwb{N+1}-strict and \wwb{N+1}-fibrant. Moreover, the left hand side 
is \wwb{N+1}exact, that is:
\begin{myeq}\label{eqmhom}
(p\sb{N}\sp{L'})\sb{\ast}~=~\Hom\sb{\cA}(A,p\sb{N})~:~\Hom\sb{\cA}(A,L'\sb{N})~
\to~\Hom\sb{\cA}(A,Z\sb{N-1})
\end{myeq}
is surjective for all $A$ in $\aC$.

To show this, given \w{\alpha\in\Hom\sb{\cA}(A,Z\sb{N-1})} we have
\w[,]{(\delta\sb{N-1} j\sb{N-1})\sb{\ast}\alpha=0} so that by exactness 
of \w{A\sbu}
we have
$$
(j\sb{N-1})\sb{\ast}\alpha~=~(\delta\sb{N}\sp{L'})\sb{\ast}\beta~=~
(j\sb{N-1}p\sp{L'}\sb{N})\sb{\ast}\beta
$$
\noindent for some \w[.]{\beta\in\Hom\sb{\cA}(A,L'\sb{N})}
Thus \w{\alpha=(p\sp{L'}\sb{N})\sb{\ast}\beta} by injectivity of 
\w[.]{(j\sb{N-1})\sb{\ast}}

The fiber sequence for \w{p\sb{N}\sp{L'}} thus has
\w{\Hom\sb{\cA}(A,\Omega p\sb{N})=\Hom\sb{\cA}(\Sigma A,p\sb{N})}
surjective, since \w[,]{\Sigma A\in\aC} which implies that 
\w{(j\sb{N})\sb{\ast}} is one-to-one.

We now construct weak equivalences
\begin{myeq}\label{eqta}
L'~\xrightarrow{i\sb{0}}~ R \xleftarrow{i\sb{1}}~K'
\end{myeq}
where \w{i\sb{0}} and \w{i\sb{1}} are the identity in degrees $<N$.
In degree $N$ the resolution \w{L'} is given by the diagram above.
In dimension $0$, diagram \wref{eqta} is given by the commutative diagram:

\mydiagram[\label{eqrc}]{
\dotsc \ar[r] & K'\sb{N+2} \ar[r] \ar[d] & K'\sb{N+1} \ar[r] \ar[d] &
K'\sb{N} \ar[r] \ar@{>->}[d]\sp{\sim} & K'\sb{N-1} \ar[r] \ar@{=}[d] & \dotsc\\
\dotsc \ar[r]\sp{I\delta\sb{N+3}} & IK'\sb{N+2} \ar[r]\sp{I\delta\sb{N+2}} &
IK'\sb{N+1} \ar[r]\sp{\widehat{H}} & L'\sb{N} \ar@{=}[d] \ar[r] & 
K\sb{N-1} \ar[r] \ar@{=}[d] & \dotsc\\
\dotsc \ar[r] & K'\sb{N+2} \ar[r] \ar[u] & K'\sb{N+1} \ar[r] \ar[u] &
L'\sb{N} \ar[r]  & L'\sb{N-1} \ar[r] & \dotsc
}
\noindent It is easy to find appropriate \w[,]{R(N+1,I\sb{1})} 
\w[,]{R(N+2,I\sb{1})} and \w{R(M,I\sb{1})=IK'(M,I\sb{1})} for \w[,]{M\geq N+3} 
so that \wref{eqta} is well defined. Here we use the adjoint maps in 
\wref[.]{eqhs} This completes the proof of Theorem \ref{thstrict} for \w[.]{n=1}
\end{proof}

\begin{tlemma}\label{ltransport}
Let $K$ be an $n$-th order resolution of $X$ in \w{\NnC} with \wwb{n-1}skeleton
\w[.]{K\sp{(n-1)}} Let
$$
L\sp{(n-1)}~\xrightarrow{f}~K\sp{(n-1)}~\xrightarrow{g}~R\sp{(n-1)}~
\hspace*{9mm}\text{in}\hspace*{2mm} \Nul{n-1}\cC
$$
\noindent be weak equivalences over $X$. Then there exist unique $n$-th order
resolutions $L$ and $R$ in \w{\NnC} together with weak equivalences
$$
L~\xrightarrow{\tilde{f}}~K~\xrightarrow{\tilde{g}}~R
$$
\noindent which, restricted to \wwb{n-1}skeleta, coincide with
$f$ and $g$ respectively.
\end{tlemma}

\begin{proof}
Given a map \w{u:\partial I\sp{n}\to U} in \w[,]{\TT} let \w{[I\sp{n},U]\sp{u}}
denote the set of homotopy classes of extensions of $u$ along the inclusion
\w{\partial I\sp{n}\hra I\sp{n}} (relative to \w[).]{\partial I\sp{n}}
By \cite[Proposition II.2.11]{BAl}, any weak equivalence \w{\hat{g}:U\to V} induces
a bijection \w[.]{\hat{g}\sb{\ast}:[I\sp{n},U]\sp{u}\to[I\sp{n},V]\sp{\hat{g}u}}
Note that given a map of \wwb{n-1}th order chain complexes
\w[,]{g:K\sp{(n-1)}\to R\sp{(n-1)}} an extension of \w{R\sp{(n-1)}}
to an $n$-th order resolution $R$ is determined by a choice of $n$-tracks for 
$R$, and these are precisely elements in sets of the form 
\w{[I\sp{n},V]\sp{\hat{g}u}} for various mapping spaces $U$ for $K$ and $V$ 
for $L$. Therefore, \w{g\sb{\ast}} induces a unique $n$-th order chain 
complex structure $R$ extending \w[.]{R\sp{(n-1)}} 

Similarly for \w[.]{f:L\sp{(n-1)}\to K\sp{(n-1)}}
\end{proof}

\begin{lemma}\label{lquotient}
Let $q:\nnC\to\NnC$ be the quotient map of Section \ref{cncatlcs}
and let \w{qK} and \w{qL} be $n$-th order resolutions of $X$ in \w{\NnC}.
If \w{qK\sim qL} are weakly equivalent, then also \w{K\sim L} are weakly 
equivalent over $X$ in \w[.]{\nnC}\hfill $\Box$
\end{lemma}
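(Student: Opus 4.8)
The plan is to lift, link by link, a zig-zag in $\NnC$ exhibiting $qK\sim qL$ to a zig-zag in $\nnC$, by replacing at each step the component maps of a weak equivalence by mapping cylinders in $\CCat$, and absorbing into the top-dimensional cubes the homotopies that witness the naturality squares in $\NnC$ but need not hold on the nose in $\nnC$. Recall that $q$ is the identity in dimensions $<n$ and, in dimension $n$, the surjection \wref{eqm} that identifies $n$-cubes homotopic rel $\partial I^{n}$; so the whole issue is concentrated in dimension $n$.

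First the reduction. Since $\bZO$ is freely generated by Lemma~\ref{lgcc}, and $q$ is the identity in dimension $<n$ and onto in dimension $n$, every functor $F\colon\bZ(\infty,-1)_{\otimes}^{n}\to\NnC$ of $n$-graded categories admits a lift $\tilde F$ with $q\tilde F=F$: one keeps the values on the generators $(i,\emptyset)$ and $(i,I_k)$ for $k<n$, and chooses for each $i$ any $n$-cube $\tilde F(i,I_n)\in\nul{n}\Mor_\CCat(F_i,F_{i-n-1})_n$ representing the $n$-track $F(i,I_n)$. Lifting in this way the intermediate terms of a zig-zag witnessing $qK\sim qL$, and keeping $K$ and $L$ at the two ends, we are reduced to the following: \emph{if $A,B\colon\bZ(\infty,-1)_{\otimes}^{n}\to\nnC$ are functors and $\bar\sigma\colon qA\to qB$ is a single weak equivalence over $X$ in $\NnC$, then $A\sim B$ over $X$ in $\nnC$.}

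For the cylinder construction, note that the components $\sigma_i$ of $\bar\sigma$ are morphisms of $\nnC^{0}=\NnC^{0}$ that are isomorphisms in $\ACat=\pi_{0}\CCat$, with $\sigma_{-1}=\id_X$; the naturality squares of $\bar\sigma$ hold strictly in $\nnC$ in dimensions $<n$, and on the generators $(i,I_n)$ merely assert that $\sigma_{i-n-1}A(i,I_n)$ and $B(i,I_n)\sigma_i$ are homotopic rel $\partial I^{n}$, so fix such homotopies $H_i$. Put $M_{-1}:=X$ and, for $i\geq0$, $M_i:=\operatorname{Cyl}(\sigma_i)=B_i\cup_{\sigma_i}IA_i$, using the pointed cylinder $IA_i$ of \wref{eqhs}; write $\kappa_i\colon A_i\hookrightarrow M_i$ and $\lambda_i\colon B_i\hookrightarrow M_i$ for the two ends (so $\kappa_{-1}=\lambda_{-1}=\id_X$) and $r_i\colon M_i\to B_i$ for the retraction. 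Define $M\colon\bZ(\infty,-1)_{\otimes}^{n}\to\nnC$ on generators — hence, by freeness, as a functor — by $M(0,\emptyset):=B(0,\emptyset)\,r_0$; for $i\geq1$ and $\dim V<n$, by letting $M(i,V)$ be the map $\operatorname{Cyl}(\sigma_i)\to\operatorname{Cyl}(\sigma_j)$ induced by the strict square $\sigma_jA(i,V)=B(i,V)\sigma_i$ (namely $B(i,V)$ on $B_i$ and $IA(i,V)$ on $IA_i$); and by letting $M(i,I_n)$ be the map out of $\operatorname{Cyl}(\sigma_i)$ that equals $\lambda_{i-n-1}B(i,I_n)$ on $B_i$ and, on $IA_i$, is the homotopy from $\lambda_{i-n-1}B(i,I_n)\sigma_i$ to $\kappa_{i-n-1}A(i,I_n)$ obtained by concatenating $(\lambda_{i-n-1})_\ast H_i$ with the canonical slide $IA_{i-n-1}\hookrightarrow\operatorname{Cyl}(\sigma_{i-n-1})$ applied to $A(i,I_n)$. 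The two pieces agree on $\{0\}\times A_i$ via $\sigma_i$ — precisely because the two ends of $H_i$ are $\sigma_{i-n-1}A(i,I_n)$ and $B(i,I_n)\sigma_i$ and the dimension-$<n$ squares are strict — and $M(i,I_n)$ lies in $\nul{n}$, since its $d_1^l$-faces vanish ($A(i,I_n)\in\nul{n}$, $H_i$ is rel $\partial I^{n}$, and all maps in sight are pointed).

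To conclude: by construction $\kappa_jA(i,V)=M(i,V)\kappa_i$ and $\lambda_jB(i,V)=M(i,V)\lambda_i$ on every generator $V$, hence (again by freeness) on every morphism, so $\kappa\colon A\to M$ and $\lambda\colon B\to M$ are natural transformations over $X$. Each $\lambda_i$ is the inclusion of a deformation retract of $M_i$, hence an isomorphism in $\ACat$, and each $\kappa_i$ is one too because $r_i\kappa_i=\sigma_i$ is and $r_i$ is a homotopy equivalence. Thus $\kappa$ and $\lambda$ are weak equivalences over $X$ in $\nnC$, so $A\sim M\sim B$, which proves the reduced statement and hence the lemma. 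The one delicate point is the construction of $M(i,I_n)$: one must check that the prescribed homotopy glues with the $B_i$-part into a genuine left $n$-cube on $\operatorname{Cyl}(\sigma_i)$ restricting correctly to both ends. This is the same cubical/cylinder bookkeeping as in the proof of Theorem~\ref{thstrict} (cf.\ \wref{eqhs} and diagram~\wref{eqrc}) and in the Transport Lemma~\ref{ltransport} (which rests on Proposition~II.2.11 of \cite{BAl}); in fact one could alternatively organise the whole argument as an application of a $\nnC$-analogue of that lemma.
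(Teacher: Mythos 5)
The paper states this lemma without proof, so there is no paper argument to compare against; I can only evaluate your argument on its own merits, and I believe it is correct.

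Your reduction (lift the intermediate terms of a zig-zag witnessing $qK\sim qL$, using freeness of $\bZO$ and the fact that $q$ is the identity in dimensions $<n$ and surjective in dimension $n$) is sound, and the mapping-cylinder construction is exactly the right device: the naturality squares of $\bar\sigma$ are strict in $\nnC$ in dimensions $<n$ and fail only up to a rel-$\partial I^{n}$ homotopy $H_{i}$ in dimension $n$, which is precisely the slack the cylinder absorbs. One point worth spelling out more prominently, since it is load-bearing: the intermediate $M$ need \emph{not} satisfy the inclusion property \wref{epcc}. The paper's definition of weak equivalence is between functors of $n$-graded categories, not pre-chain complexes, and indeed your $M(i,I_{n})$ would fail the inclusion property (its $\partial^{j}$-faces are ``constant-then-slide'' reparameterisations, whereas the composite $M(i',I_{r})M(i'',I_{s})$ is the uniform-speed slide). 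Since the equivalence relation $\sim$ is generated by weak equivalences between arbitrary functors, this is not a problem, but a reader could easily be misled into thinking $M$ should itself be a pre-chain complex, so the point deserves a sentence. Beyond this I have two minor omissions to flag: you only spell out $M(0,\emptyset)$ among the generators targeting $-1$, while the cases $M(k,I_{k})$ for $1\leq k\leq n$ also need the convention $M_{-1}=X$ rather than $\operatorname{Cyl}(\sigma_{-1})$ (the analogous formula $M(k,I_{k})=B(k,I_{k})\,r_{k}$ for $k<n$, and the degenerate slide for $k=n$, both work, but are left implicit); and the verification that the concatenation on $IA_{i}$ lands in $\nul{n}$ and that $M(i,I_{n})\kappa_{i}=\kappa_{i-n-1}A(i,I_{n})$, $M(i,I_{n})\lambda_{i}=\lambda_{i-n-1}B(i,I_{n})$ should be carried out rather than left as the ``delicate point'' — it is straightforward but is the heart of the proof. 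Modulo these, your argument is correct, and it is consonant with the cylinder techniques the paper itself uses in the proof of Theorem~\ref{thstrict} for $n=1$ (cf.\ diagrams~\wref{eqhtpy}, \wref{eqth}, \wref{eqrc}).
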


\begin{lemma}
\label{lstrictify}
Let $K$ be an $n$-th order resolution of $X$ in \w{\NnC} for some 
\w[,]{n\geq 2} and assume that its \wwb{n-1}skeleton \w{K\sp{(n-1)}} 
(in \w[)]{\nnC} is \wwb{N-1}(strict, fibrant, exact), for some 
\w[.]{N\geq 1} Then there is a weakly equivalent $n$-th order resolution 
\w{L\sim K} of $X$ which is $N$-(strict, fibrant, exact).
\end{lemma}

\begin{proof}
Since the \wwb{n-1}skeleton of $K$ is strict, the $n$-track \w{K(i,I\sb{n})} 
is given by elements
$$
\alpha\sb{i}~\in~\Hom\sb{\cA}(\Sigma\sp{n}K\sb{i},K\sb{i-n-1})\hspace*{9mm} 
i-n-1\geq -1
$$
\noindent for each \w[.]{2\leq i\leq N} We shall show by induction on 
\w{0< i\leq N+1} that $K$ is weakly equivalent to an $n$-th order resolution 
which is \wwb{i-1}(strict, fibrant, exact), so \w{\alpha\sb{j}=\omi} 
for \w[.]{j<i} 

By the obstruction property of $K$ we have
\w[.]{\delta\alpha\sb{i}\pm\alpha\sb{i-1}\delta=\omi}
Then \w[,]{\delta\alpha\sb{i}=\omi} and the exactness yields $\beta$ with
\w[.]{\alpha\sb{i}=\beta\delta}
We construct weak equivalences \w{L\to R\leftarrow K} in \w{\NnC} which in
dimension $0$ are given by the commutative diagram
$$
\xymatrix{
\dotsc \ar[r] & K\sb{i+1} \ar[r] \ar[d] & K\sb{i} \ar[r] \ar[d] &
K\sb{i-1} \ar[r] \ar@{=}[d] &\dotsc &, & K\ar[d]\sp{i\sb{0}}\\
\dotsc \ar[r]\sp{I\delta} & IK\sb{i+1} \ar[r] &  IK\sb{i} \ar[r] & 
K\sb{i-1} \ar@{=}[d] \ar[r]
&\dotsc &, & R\\
\dotsc \ar[r] & K\sb{i+1} \ar[r] \ar[u] & K\sb{i} \ar[r] \ar[u] &
K\sb{i-1} \ar[r]  &\dotsc &, & L\ar[u]\sb{i\sb{1}}\\
}
$$
\noindent The \wwb{n-2}skeleton of $R$ is strict. We define \w{R(i,I\sb{n-1})}
by $\beta$.
Then we can choose\w{R(i,I\sb{n})} such that \w{i\sb{0}} is a well-defined map
and $L$ is $i$-strict in \w{\NnC}.
\end{proof}

\begin{proof}[Proof of Theorem \protect\ref{thstrict} for \ $n\geq 2$]
By induction on $n$, we assume that the Theorem holds for \w[.]{n-1} Let $K$ be 
a resolution of $X$ in \w[,]{\NnC} and let $K\sp{(n-1)}$ be the $(n-1)$-skeleton
of $K$ in \w[.]{\nnC} For \w{qK\sp{(n-1)}} we get by assumption a weak equivalence
\w[,]{qK\sp{(n-1)}\sim qL\sp{(n-1)}} where $L$ is $N$-strict. Hence by
Lemma \ref{lquotient} we have \w[,]{K\sp{(n-1)}\sim L\sp{(n-1)}} and by the
Transport Lemma \ref{ltransport}  we get \w{K\sim L} in \w[,]{\NnC} where
\w{L\sp{(n-1)}} is strict.
Now Lemma \ref{lstrictify} yields \w{L\sim L'} in \w[,]{\NnC} where \w{L'}
is $N$-strict.
\end{proof}

\begin{proof}[Proof of Theorem \protect\ref{twelldef}]
Let $K$ and $L$ be two resolutions of $X$ in \w[.]{\NnC} By Theorem
\ref{thstrict} we have \w{L\sim L'} and \w[,]{K\sim K'} where \w{L'} and \w{K'}
are $N$-(strict, fibrant, exact) for large $N$. This yields a map of spectral
sequences \w{E\sb{K}\to E\sb{L}} which induces an isomorphism on the 
$E\sb{2}$-term. Hence \w{E\sb{K}\to E\sb{L}} is also an isomorphism.
\end{proof}

\begin{proof}[Proof of Theorem \protect\ref{temma}]
An $N$-strict $N$-fibrant $N$-exact coresolution of $X$ for large $N$, as in
Remark \ref{rstrict}, corresponds to the $X$-coaugmented sequence in
\cite[\S 6.7]{BJSe} given by the Adams fiber tower \cite[7.1]{BJSe}.
\end{proof}

\begin{remark}
Strictification results for $\infty$-homotopy commutative diagrams appear in
\cite[Theorem~IV.4.37]{BVogHI} and \cite[Theorem~2.4]{DKSmH}, inter alia.
However, these do not yield the precise case needed for Theorem \ref{thstrict}.
The explicit construction given in this context may be of independent interest.
\end{remark}

%
%
\sect{The secondary \ww{Ext}-group}
\label{cddt}

We now illustrate the general theory described above in the first interesting
case, namely, the secondary \ww{\Ext}-group which is the \ww{E\sb{3}}-term
of the Adams spectral sequence. This is determined by the
\ww{d\sb{2}}-differential, which we now describe. The actual computation of this
differential appears in \cite{BSe,BJSe,BJiblD}.  As a test for the theory,
computer calculations based on those results were carried out, recovering the
values of \w{d\sb{2}} in \cite[App.\ 3]{RavC}.

We first specialize Definition \ref{dobstr} to the case \w[:]{m=2}
let \w{\cT(\leq 1)} be a $\Sigma$-algebra of left cubical balls and let
$K$ be a resolution in \w{\cT(1)} of $X$, based on \w{A\sbu} in $\cA$,
see~\wref[.]{eqK} Then
$$
d\sb{2}:\Ext\sb{\cA}^r(X, Y)~\to~ \Ext\sb{\cA}\sp{r + 2}(\Sigma X, Y)
$$
\noindent is given as follows:

For \w{\{\beta\} \in \Ext\sb{\cA}^r(X, Y)} with \w{\beta:A\sb{r}\to Y} in $\cA$,
we have \w[,]{\beta \delta\sb{r + 1} = 0} so that there is a $1$-track $H$ with
\w[.]{\partial\sp{1} H = \beta \delta\sb{r + 1}} On the other hand $K$ yields a
$1$-track $G$ with \w[.]{\partial\sp{1} G = \delta\sb{r + 1} \delta\sb{r + 2}}
Then the obstruction of Definition \ref{dobstr} is
\begin{myeq}
\omega~=~\OO \hat L(r + 2, \partial I\sb{2})~=~\OO(H \delta\sb{r + 2}, \beta G)
\in [\Sigma A\sb{r + 2}, Y]~,
\end{myeq}
\noindent and this element represents \w[.]{d\sb{2}\{\beta\} = \{\omega\}}

\begin{lemma}
\label{lem:d2WellDefined}
The differential \w{d\sb{2}} is well defined.
\end{lemma}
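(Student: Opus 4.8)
The plan is to verify the two assertions packed into ``well defined'': that the obstruction element $\omega$ constructed in Definition~\ref{dobstr} (in the case $m=1$) is a cocycle, so that the class $\{\omega\}$ makes sense in the target $\Ext$-group, and that this class does not depend on the auxiliary data entering the construction --- namely the lift $\tilde\beta$ of $\beta$ to $\TCat(1)^{0}$, the $1$-track $H$ (equivalently, the pre-chain complex $\hat L$), and the choice of cocycle representative $\beta$ of a fixed class $\{\beta\}$. Independence of the ambient resolution $K$ of $X$ and of $A_{\bullet}$ is \emph{not} at issue here; that is dealt with separately in Theorem~\ref{twelldef}.

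First I would prove the cocycle property. Here $\omega=\OO_{B}(H\delta_{r+2},\tilde\beta G)$ with $B$ the double of $I$ and $G=K(r+2,I_{1})$; it lies in $D(A_{r+2},Y)=\Hom_{\ACat}(\Sigma A_{r+2},Y)$, so ``cocycle'' means $(\delta_{r+3})^{\ast}\omega=0$ in $\Hom_{\ACat}(\Sigma A_{r+3},Y)$. By the naturality rule (Definition~\ref{dntc}(4)), $(\delta_{r+3})^{\ast}\omega=\OO_{B}(H\delta_{r+2}\delta_{r+3},\,\tilde\beta G\delta_{r+3})$. Since $K$, being a resolution, is in particular a chain complex, $\OO K(r+3,\partial I_{2})=\OO_{B}(\delta_{r+1}K(r+3,I_{1}),\,G\delta_{r+3})=0$; post-composing with $\tilde\beta$ and using that the action of $D$ on $1$-tracks with fixed boundary is effective forces the identity $\tilde\beta G\delta_{r+3}=\tilde\beta\delta_{r+1}K(r+3,I_{1})$ in $\Mor_{\TCat(1)}(A_{r+3},Y)_{1}$, so that $(\delta_{r+3})^{\ast}\omega=\OO_{B}(H\delta_{r+2}\delta_{r+3},\,\tilde\beta\delta_{r+1}K(r+3,I_{1}))$. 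Finally I would feed the composable pair of $1$-tracks $f=H\colon A_{r+1}\to Y$, $g=K(r+3,I_{1})\colon A_{r+3}\to A_{r+1}$ into the triviality rule (Definition~\ref{dntc}(5), in the case $n=1$), whose conclusion is precisely $\OO_{B}\big((\partial^{1}f)g,\,f(\partial^{1}g)\big)=\OO_{B}\big(\tilde\beta\delta_{r+1}K(r+3,I_{1}),\,H\delta_{r+2}\delta_{r+3}\big)=0$; hence $(\delta_{r+3})^{\ast}\omega=0$.

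Independence of $H$ is the action formula of Section~\ref{cob}: replacing $H$ by $H+\gamma$ with $\gamma\in D(A_{r+1},Y)$ replaces $H\delta_{r+2}$ by $H\delta_{r+2}+(\delta_{r+2})^{\ast}\gamma$ (Definition~\ref{dntc}(4)), hence $\omega$ by $\omega+(\delta_{r+2})^{\ast}\gamma$, which differs from $\omega$ only by the coboundary $\gamma\circ\Sigma\delta_{r+2}$ in $\Hom_{\ACat}(\Sigma A_{\bullet},Y)$; a change of lift $\tilde\beta$ can be absorbed into a change of $H$ by composing the relevant tracks with a path joining the two lifts, so it too moves $\omega$ by a coboundary. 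For independence of the representative, the obstruction is additive in $\beta$ (all the compositions involved being bilinear in the $\Sigma$-mapping algebra), so it suffices to check that $\omega$ vanishes when $\beta=\gamma\delta_{r}$ is a coboundary: taking the lift $\tilde\gamma\delta_{r}$ and --- since $K(r+1,I_{1})$ is a nullhomotopy of $\delta_{r}\delta_{r+1}$ --- the $1$-track $\tilde\gamma K(r+1,I_{1})$, the naturality rule gives $\omega=\tilde\gamma_{\ast}\,\OO_{B}\big(K(r+1,I_{1})\delta_{r+2},\,\delta_{r}K(r+2,I_{1})\big)=\tilde\gamma_{\ast}\big(\pm\OO K(r+2,\partial I_{2})\big)=0$, so $\{\omega\}$ is unchanged.

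I expect the cocycle step to be the crux; the other two reductions are essentially formal once the action formula and the additivity of the obstruction are in hand, whereas the cocycle identity hinges on choosing exactly the right instance of the triviality rule together with the right substitution coming from the chain-complex property of $K$. Throughout one must also check that all the gluing conditions hold, so that each $\OO_{B}(\cdots)$ above is in fact defined, and keep track of the orientation signs $\epsilon_{1},\epsilon_{2}$ of the two cells of the double of $I$, which I have suppressed in the sketch above.
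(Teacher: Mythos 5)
Your proposal is correct and follows essentially the same route as the paper: prove the cocycle identity by naturality, the chain-complex property of $K$, and the triviality rule; show independence of $H$ via the action formula; and reduce independence of the representative to vanishing when $\beta$ is a coboundary, using the distinguished nullhomotopy $\gamma G''$ coming from $K$. The only departures are minor: where you infer the equality of $1$-tracks $\tilde\beta G\delta_{r+3}=\tilde\beta\delta_{r+1}K(r+3,I_{1})$ from $\OO_{B}=0$ together with effectiveness of the action, the paper instead pushes the vanishing obstruction through $\beta$ by naturality and then applies the complement rule (an equivalent device for the double of $I$); and you are somewhat more explicit than the paper about the need to fix a lift $\tilde\beta$ of $\beta$ to $\TCat(1)^{0}$ and about the additivity underlying the reduction to coboundaries, both of which the paper leaves implicit.
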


\begin{proof}
We first check that $\omega$ is a cocycle. In fact,
\begin{align*}
\omega(\Sigma \delta\sb{r + 3}) &
= \OO(H \delta\sb{r + 2}, \beta G)(\Sigma \delta\sb{r + 3})\\
& = \OO(H \delta\sb{r + 2} \delta\sb{r + 3}, \beta G \delta\sb{r + 3})\tag{1}\\
& = \OO(H \delta\sb{r + 2} \delta\sb{r + 3}, \beta \delta\sb{r + 1} G')\tag{2}\\
& = 0.\tag{3}
\end{align*}
\noindent Here (1) holds by naturality of $\OO$ (see \wref[).]{eqnatur}
Moreover, \w{G'} in (2) is the $1$-track with
\w{\partial\sp{1} G' = \delta\sb{r + 2} \delta\sb{r + 3}} given by the 
resolution $K$, so that \w[.]{\OO(G \delta\sb{r + 3}, \delta\sb{r + 1} G') = 0} 
Hence by naturality also 
\w[,]{\OO(\beta G \delta\sb{r + 3}, \beta \delta\sb{r + 1} G') = 0} so that (2)
holds by the complement rule in Lemma \ref{lcompl}. Finally (3) holds by the
triviality rule.

Next, we show that \w{\{\omega\}} does not depend on the choice of $H$.
If we choose \w{H'} instead, there is an $\alpha$ with \w[,]{H' = H + \alpha}
and we get
$$
\omega' = \OO((H + \alpha) \delta\sb{r + 2}, \beta G) =
\omega \pm \alpha \delta\sb{r + 2}
$$
\noindent by the action rule. Hence \w{\omega - \omega'} is a coboundary, so that
\w[.]{\{\omega'\} = \{\omega\}}

Finally, we check that \w{d\sb{2}\{\beta\}} is trivial if $\beta$ is a coboundary
\wh that is,  \w[.]{\beta = \beta' \delta\sb{r}} In fact, we can then choose $H$
to be the $1$-track \w[,]{\beta' G''} where \w{G''} with
\w{\delta\sp{1} G'' = \delta\sb{r} \delta\sb{r + 1}} is given by $K$, so that
\w[.]{\OO(G'' \delta\sb{r + 2}, \delta\sb{r} G) = 0} Hence also
\w[,]{\OO(\beta' G'' \delta\sb{r + 2}, \beta' \delta\sb{r} G) = 0} so that
\w[.]{\OO(H \delta\sb{r + 2}, \beta G) = 0}
\end{proof}

The Lemma is proved in \cite{BJSe} in the context of track categories, above
we use only algebras of left $1$-cubical balls. The proof that 
\w{d\sb{2} d\sb{2} = 0} requires the product rule below.

Next, we prove that the assumption on $L$ in Definition \ref{dobstr} is
satisfied for \w[.]{m = 2} This leads to the definition of the differential
\w[.]{d\sb{3}}

\begin{lemma}
Suppose that \w[.]{d\sb{2}\{\beta\} = 0} Then for \w{m = 2} there is a chain 
complex $L$ as in Definition \ref{dobstr}.
\end{lemma}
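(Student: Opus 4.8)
The plan is to obtain $L$ by extending $K$ with a single new bottom object $L_{r-1}:=Y$, and then to use the hypothesis $d_{2}\{\beta\}=0$ to kill the one obstruction that is not automatically trivial. Concretely, put $L_{i}:=K_{i}$ for $i\geq r$ and $L(i,V):=K(i,V)$ for every morphism $V$ of $\bZ(\infty,r-1)_{\otimes}^{1}$ whose target lies in $\bZ(\infty,r)$. Arguing as in Lemma \ref{lgcc}, the only generators of $\bZ(\infty,r-1)_{\otimes}^{1}$ not yet assigned are $(r,\emptyset)\colon r\to r-1$ and $(r+1,I_{1})\colon r+1\to r-1$. Since the quotient functor $q\colon\TCat(1)^{0}\to\ACat$ is full, pick $\tilde\beta\in\TCat(1)^{0}$ with $q(\tilde\beta)=\beta$ and set $L(r,\emptyset):=\tilde\beta$; then $q(\tilde\beta\,K(r+1,\emptyset))=\beta\delta_{r+1}=0$, so the boundary property in dimension $1$ (Definition \ref{dntc}) yields a $1$-track $H$ with $\partial^{1}H=\tilde\beta\,K(r+1,\emptyset)$ --- this is exactly the $1$-track $H$ entering the definition of $d_{2}\{\beta\}$ --- and we tentatively set $L(r+1,I_{1}):=H$.

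With these assignments $L$ is a functor of $1$-graded categories: freeness reduces this to noting that the only instance of the inclusion property \eqref{epcc} not inherited from $K$ is $\partial^{1}L(r+1,I_{1})=L(r,\emptyset)\,L(r+1,\emptyset)=\tilde\beta\,K(r+1,\emptyset)$, which holds by the choice of $H$. Thus $L$ is a $1$-st order pre-chain complex over $\bZ(\infty,r-1)_{\otimes}^{1}$ based on the diagram of Definition \ref{dobstr} with $m=2$, and by construction it coincides with $K$ on $\bZ(\infty,r)$. It remains to arrange the obstruction property. For $i\geq r+3$ every morphism occurring in $L(i,\partial I_{2})$ has target in $\bZ(\infty,r)$, hence agrees with $K$, and since $K$ is a (first order) resolution, $\OO L(i,\partial I_{2})=\OO K(i,\partial I_{2})=0$. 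The only obstruction left is
\[
\OO L(r+2,\partial I_{2})=\OO_{B}(L(r+2,\emptyset\otimes I_{1}),\,L(r+2,I_{1}\otimes\emptyset))=\OO_{B}(\tilde\beta\,G,\,H\,K(r+2,\emptyset)),
\]
where $B$ is the double of $I^{1}$ and $G:=K(r+2,I_{1})$ is the $1$-track supplied by $K$ with $\partial^{1}G=K(r+1,\emptyset)\,K(r+2,\emptyset)$. Comparing with Section \ref{cddt}, this equals $\pm\omega$, where $\omega=\OO(H\delta_{r+2},\beta G)\in\Hom_{\ACat}(\Sigma A_{r+2},Y)$ is the cocycle representing $d_{2}\{\beta\}\in E_{2}^{r+2,1}$, and the sign is an orientation sign of a cell of $B$. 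For the tentative $L$ this need not be zero.

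To fix it, I would exploit the freedom in the choice of the $1$-track $L(r+1,I_{1})$: replacing $H$ by $H+\alpha$ for $\alpha\in D(K_{r+1},Y)=\Hom_{\ACat}(\Sigma A_{r+1},Y)$ leaves $\partial^{1}$ unchanged (so $L$ remains a pre-chain complex based on the same diagram and still agrees with $K$ on $\bZ(\infty,r)$) and alters no other value of $L$, while by the naturality rule $(H+\alpha)\,K(r+2,\emptyset)=H\,K(r+2,\emptyset)+(\Sigma\delta_{r+2})^{\ast}\alpha$, so by the action formula $\OO L(r+2,\partial I_{2})$ changes by $\pm(\Sigma\delta_{r+2})^{\ast}\alpha$, where $(\Sigma\delta_{r+2})^{\ast}$ is the coboundary of the cochain complex $\Hom_{\ACat}(\Sigma A_{\bullet},Y)$ computing $E_{2}^{\ast,1}$. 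By the proof of Lemma \ref{lem:d2WellDefined}, $\omega$ is a cocycle, and $d_{2}\{\beta\}=\{\omega\}=0$ forces $\omega\in\image((\Sigma\delta_{r+2})^{\ast})$; hence $\alpha$ can be chosen so that $\OO L(r+2,\partial I_{2})$ becomes $0$. Redefining $L(r+1,I_{1}):=H+\alpha$ then makes all obstructions vanish, so $L$ is a $1$-st order chain complex of the form required in Definition \ref{dobstr}. The main obstacle is the identification in the middle paragraph: one must carefully match the tentative obstruction $\OO L(r+2,\partial I_{2})$ with the cocycle representing $d_{2}\{\beta\}$, tracking the orientation signs $\epsilon_{j}$ of the double $B$; once that is done, the hypothesis $\{\omega\}=0$ delivers precisely the element $\alpha$ that trivializes it, and checking functoriality and the inclusion property is routine given Lemma \ref{lgcc}.
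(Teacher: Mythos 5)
Your proposal is correct and follows essentially the same line as the paper's proof: since $d_{2}\{\beta\}=0$, the cocycle $\omega=\OO(H\delta_{r+2},\beta G)$ is a coboundary, and the action rule lets you replace $H$ by $H\pm\alpha$ to make the single nontrivial obstruction vanish, after which $L$ is defined by the modified $H$ together with $K$. The only difference is that you spell out the construction of $L$ as a functor (identifying the new generators $(r,\emptyset)$ and $(r+1,I_{1})$ via Lemma~\ref{lgcc} and verifying the inclusion property and the vanishing of the obstructions for $i\geq r+3$), whereas the paper leaves these checks implicit.
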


\begin{proof}
The assumption \w{d\sb{2}\{\beta\} = 0} shows that
\w{\omega = \OO(H \delta\sb{r + 2}, \beta G) = \alpha \delta\sb{r + 2}} is a
coboundary. Hence we get by the action rule \w[,]{H'=H\pm\alpha} so that
\w[.]{\OO(H'\delta\sb{r + 2},\beta G)=0} Hence we define the chain complex $L$
by \w{H'} and by $K$.
\end{proof}

In the context of a $\Sigma$-track algebra \w{\Tln} \wb[,]{n \geq 1} the
following result can be proved which is the higher dimensional analogue of
Lemma~\ref{lem:d2WellDefined}.

\begin{prop}
Given $L$, $\hat L$, and
$$
\omega = \OO \hat L(r + m + 1, \partial I\sb{m + 1})
$$
\noindent as in Definition \ref{dobstr}, then $\omega$ is a cocycle, that is:
$$
\omega(\Sigma\sp{m} \delta\sb{r + m + 2})= 0~.
$$
\noindent Moreover, if $\beta$ is a coboundary, then $L$ and $\hat L$ can be
chosen so that \w[.]{\omega = 0} Let $L$ be given and let $\hat L$ and 
$\overline L$ be two choices as in Definition \ref{dobstr}. Then
\w{\overline{\omega}:=\OO\overline{L}(r + m + 1, \partial I\sb{m + 1})} and
$\omega$ differ by a coboundary \wh that is,
\w[.]{\omega - \overline{\omega} = \alpha (\Sigma\sp{m}\delta\sb{r + m + 1})}
\end{prop}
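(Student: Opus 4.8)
The plan is to mimic the proof of Lemma~\ref{lem:d2WellDefined} (the case $m=1$), using the Hauptlemma machinery of Section~\ref{cisrt} in place of the \emph{ad hoc} computations there; the cocycle assertion is the substantial part, the other two being bookkeeping. Write $B=\TCat(m)_{0}^{m}$, and let $b_{1},\dots,b_{m+1}$ be the components of $\hat{L}(r+m+1,\partial I_{m+1})$, so $\omega=\OO_{B}(b_{1},\dots,b_{m+1})$. The observation that drives everything is that, by the requirements of Definition~\ref{dobstr}, $\hat{L}$ coincides with the $m$-skeleton of $K$ on $\bZ(\infty,r)$, while $\hat{L}(r,\emptyset)$ is a lift of $\beta$; hence any obstruction of $\hat{L}$ whose target lies in $\bZ(\infty,r)$ is already an obstruction of $K$ and therefore vanishes (since $K$, being an $n$-th order chain complex, restricts to an $m$-th order chain complex), while any obstruction of $\hat{L}$ with target $r-1$ is the image under $\beta_{*}$ of an obstruction of $K$.

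\emph{Cocycle.} First I would rewrite, by the naturality rule, $\omega(\Sigma^{m}\delta_{r+m+2})=\delta_{r+m+2}^{*}\omega=\OO_{B}(b_{1}\delta_{r+m+2},\dots,b_{m+1}\delta_{r+m+2})$, and identify this with $\OO_{B}\hat{L}\bigl(r+m+2,(\partial I_{m+1})\otimes\emptyset\bigr)$, since each $b_{i}\delta_{r+m+2}=\hat{L}(r+m+1,c_{i})\circ\hat{L}(r+m+2,\emptyset)$. Because $\hat{L}$ is a pre-chain complex, the relations $\partial_{\rho,\sigma}\sim 0$ for $\rho+\sigma=m+1$, $\rho,\sigma\geq 1$, hold by the triviality rule, so the complement-rule argument from the proof of Hauptlemma~\ref{lhaupt}, run at source $r+m+2$, yields (up to sign) the identity $\OO_{B}\hat{L}\bigl(r+m+2,(\partial I_{m+1})\otimes\emptyset\bigr)=\pm\,\OO_{B}\hat{L}\bigl(r+m+2,\emptyset\otimes\partial I_{m+1}\bigr)$. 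Finally the right-hand obstruction vanishes: its $i$-th component is $\hat{L}(r,\emptyset)\circ\hat{L}(r+m+2,c_{i})=\beta\circ K(r+m+2,c_{i})$, so by the naturality rule it equals $\beta_{*}\OO_{B}K(r+m+2,\partial I_{m+1})=\beta_{*}(0)=0$. This is the delicate step: the Hauptlemma \emph{as stated} presupposes the vanishing of the obstruction one level down, which here is exactly $\omega$ and need not vanish, so one must reread its proof as a chain of \emph{equalities} of obstructions and carry the term $\delta_{r+m+2}^{*}\omega$ through the complement-rule equivalences.

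\emph{The coboundary case.} Suppose $\beta=\beta'\delta_{r}$. Fix a lift $\tilde{\beta}'\in\TCat(m)^{0}$ of $\beta'$ and define $\hat{L}$ by ``$\tilde{\beta}'$-relabelling $K$'': put $\hat{L}_{i}=K_{i}$ for $i\geq r$ and $\hat{L}_{r-1}=Y$, and for a morphism $V:i\to j$ in $\bZ(\infty,r-1)$ set $\hat{L}(i,V)=K(i,V)$ when $j\geq r$ and $\hat{L}(i,V)=\tilde{\beta}'\circ K(i,V)$ when $j=r-1$; let $L$ be the reduction to $\TCat(m-1)$ of the $(m-1)$-skeleton of $\hat{L}$. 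Then $\hat{L}$ is a pre-chain complex (the inclusion property is preserved under post-composition with $\tilde{\beta}'$, by~\eqref{eqfacemaps}), $\hat{L}$ coincides with the $m$-skeleton of $K$ on $\bZ(\infty,r)$ and $q\hat{L}(m-1)=L$ by construction, and $L$ is an $(m-1)$-th order chain complex based on $\dots\to A_{r+1}\to A_{r}\xra{\beta}Y$ (its obstructions with target in $\bZ(\infty,r)$ vanish because $K$ is a chain complex, the remaining one is $\beta'_{*}$ of such). With these choices $\hat{L}(r+m+1,\partial I_{m+1})$ has $i$-th component $\tilde{\beta}'\circ K(r+m+1,c_{i})$, so $\omega=\beta'_{*}\OO_{B}K(r+m+1,\partial I_{m+1})=0$.

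\emph{Dependence on the choice of $\hat{L}$.} With $L$ fixed, any two admissible choices $\hat{L},\overline{L}$ coincide with the $m$-skeleton of $K$ on $\bZ(\infty,r)$ and have the same $(m-1)$-skeleton $L$; since $\bZO^{m}$ is freely generated by the $(i,\emptyset)$ and $(i,I_{k})$ (Lemma~\ref{lgcc}), the only generator on which they can differ is $(r+m,I_{m})$, whose two values have the common $L$-prescribed boundary and hence satisfy $\overline{L}(r+m,I_{m})=\hat{L}(r+m,I_{m})+\alpha_{0}$ for a unique $\alpha_{0}\in D(A_{r+m},Y)=\Hom_{\ACat}(\Sigma^{m}A_{r+m},Y)$. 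Comparing $\hat{L}(r+m+1,\partial I_{m+1})$ with $\overline{L}(r+m+1,\partial I_{m+1})$ componentwise, only the component indexed by $I_{m}\otimes\emptyset$ changes, from $\hat{L}(r+m,I_{m})\,\delta_{r+m+1}$ to $\bigl(\hat{L}(r+m,I_{m})+\alpha_{0}\bigr)\delta_{r+m+1}=\hat{L}(r+m,I_{m})\,\delta_{r+m+1}+\delta_{r+m+1}^{*}\alpha_{0}$, by the naturality rule. The Action Formula then gives $\overline{\omega}=\omega+\epsilon_{m+1}\,\delta_{r+m+1}^{*}\alpha_{0}$, with $\epsilon_{m+1}$ the orientation sign of the last cell of $B$; hence $\omega-\overline{\omega}=\alpha\,(\Sigma^{m}\delta_{r+m+1})$ with $\alpha=-\epsilon_{m+1}\alpha_{0}$, a coboundary as required.
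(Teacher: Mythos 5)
The paper does not actually provide a proof of this Proposition --- it is stated as the higher-dimensional analogue of Lemma~\ref{lem:d2WellDefined}, and Remark~\ref{rsigmaTrackAlgebraHigherExtGroups} defers the well-definedness assertion to the sum- and product-rule framework of the Appendix --- so there is no paper proof to compare against directly. Your argument is the natural generalization of the $m=1$ computation in Lemma~\ref{lem:d2WellDefined}, and the key observation in the cocycle part is correct and well spotted: the chain of complement-rule steps inside the proof of Hauptlemma~\ref{lhaupt} consists of genuine \emph{equalities} of obstructions (since the complement rule is stated as an equality once the auxiliary $\partial_{\rho,\sigma}$-obstruction vanishes, which the triviality rule supplies for any pre-chain complex), so it yields $\OO_{B}\hat L\bigl(r+m+2,(\partial I_{m+1})\otimes\emptyset\bigr)=\OO_{B}\hat L\bigl(r+m+2,\emptyset\otimes\partial I_{m+1}\bigr)$ with no vanishing hypothesis needed; the right side then dies by naturality because $\hat{L}$ agrees with $K$ above $r$ and $K$ is a chain complex. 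The coboundary case by $\tilde\beta'$-relabelling is likewise sound.

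One step in the third part deserves more care. You assert that, with $L$ fixed, the only generator on which $\hat L$ and $\overline L$ can differ is $(r+m,I_{m})$. Since the quotient $q:\TCat(m)^{m-1}\to\TCat(m-1)$ only collapses in degree $m-1$, the condition $q\hat L(m-1)=L=q\overline L(m-1)$ pins down $\hat L(r+k,I_{k})$ exactly only for $k\leq m-2$; it fixes $\hat L(r+m-1,I_{m-1})$ only up to homotopy rel boundary, and for $m=1$ it does not constrain the lift $\hat L(r,\emptyset)$ of $\beta$ at all. So $\hat L$ and $\overline L$ may also differ at $(r+m-1,I_{m-1})$. This does not break your conclusion, but you need to say why: two such $(m-1)$-cubes homotopic rel boundary yield, after composition with the fixed $1$-cube $K(r+m+1,I_{1})$, $m$-cubes that are homotopic rel boundary, hence the same $m$-track $b_{m}$ in $\TCat(m)$. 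Once that is added (and the $m=1$ corner case of the representative of $\beta$ is addressed, say by a similar absorption argument or by fixing it as part of the data), the reduction to a change only in $b_{m+1}$ and the Action Formula gives exactly the coboundary relation claimed.
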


\setcounter{section}{18}
%
%
\secta{Appendix: Complete algebras of left cubical balls}
\label{ccta}

Definition  \ref{dobstr} of the differential \w{d\sb{m+1}} makes sense in
any $\Sigma$-algebra of left cubical balls \w[,]{\Tln} but we need not have
\w[.]{d\sb{m+1}\circ d\sb{m+1}=0} However, this is automatically
satisfied in the topological examples of \S \ref{egspec} (and the differential
in the Adams spectral sequence is indeed given by Definition \ref{dobstr}).

In this appendix, which is not needed for the rest of the paper, we describe
(without proof) some properties of $\Sigma$-algebras of left cubical balls
which are needed to define the higher \ww{\Ext}-groups.  For this purpose
we introduce the notion of a \emph{complete} $\Sigma$-algebra of left cubical
balls. An example is provided by
the $\Sigma$-algebra of left cubical balls \w[,]{\NlnC} where $\cC$ is a
complete $\Sigma$-mapping algebra as in Definition \ref{dsmac}.

For such a $\cC$, the endofunctor \w{\Sigma:\cC \lra \cC} induces an
endofunctor
\begin{myeq}\label{eqief}
\Sigma:\NlnC \lra \NlnC
\end{myeq}
\noindent of $\Sigma$-algebras of left cubical balls satisfying
\begin{myeq}\label{eqiefp}
\Sigma \OO\sb{B}(b\sb{1}, \dotsc, b\sb{k}) = \OO\sb{B}(\Sigma b\sb{1}, \dotsc, 
\Sigma b\sb{k})
\end{myeq}
\noindent for each left cubical ball of dimension $\leq n$, see~\ref{dalgncb}(3).

\begin{defn}\label{dcsa}
Let \w{\Tln} be a $\Sigma$-track algebra and let
$$
\xymatrix@1{\Sigma:\cT(\leq n)\ar[r] & \cT(\leq n)}
$$
\noindent be an endofunctor of \w[,]{\cT(\leq n)} as in  \wref[,]{eqief} 
satisfying \wref[,]{eqiefp} such that $\Sigma$ induces the endofunctor
\w{\Sigma:\cA \lra \cA} of the $\Sigma$-algebra $\cA$. Then \w{\Tln}
is a \emph{complete $\Sigma$-algebra of left cubical balls} if the sum rule
and the product rule below are satisfied.
\end{defn}

\begin{defn}[sum rule]\label{defi:sumRule}
Let\w[.]{m \leq n} Assume given a pre-chain complex $L$ in \w{\cT(m)}
based on
\begin{equation*}
\tag{1} \xymatrix@1{Y & A\sb{0}\ar[l]_(.44)\alpha & 
A\sb{1}\ar[l]_(.4){\delta\sb{1}} &
\dotsc\ar[l] & A\sb{m + 1}\ar[l]}
\end{equation*}
and a pre-chain complex \w{L'} in \w{\cT(m - 1)} based on
\begin{equation*}
\tag{2} \xymatrix@1{Y & \Sigma A\sb{1}\ar[l]_\beta &
\Sigma A\sb{2}\ar[l]_(.42){\Sigma \delta\sb{2}} & \dotsc\ar[l] & 
\Sigma A\sb{m + 1}\ar[l]}
\end{equation*}
such that \w{L'} restricted to \w{\bZ(m + 1, 1)} coincides with \w[.]{\Sigma L}
Under these assumptions, the \emph{sum rule} in dimension $m$ demands that
there exist a pre-chain complex
\w{L''} in \w{\cT(m)} based on~(1) such that \w{L''} restricted to
\w{\bZ(m + 1, 0)} coincides with $L$ and
\begin{equation*}
\tag{3} \OO L''(m + 1, \partial I\sb{m + 1})=\OO L(m + 1, \partial I\sb{m + 1}) +
\OO L'(m + 1, \partial I\sb{m}).
\end{equation*}
\end{defn}

\begin{prop}\label{psrin}
The sum rule is satisfied in \w{\NlnC} in \wref[.]{eqief}
\end{prop}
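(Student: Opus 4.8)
The plan is to prove the sum rule for $\NlnC$ by constructing the required pre-chain complex $L''$ explicitly from $L$ and $L'$, using the suspension structure that a complete $\Sigma$-mapping algebra $\CCat$ carries to convert the ``$\Sigma$-side'' data of $L'$ into a modification of the part of $L$ that lands on $Y$, and then evaluating the top obstruction $\OO L''(m+1,\partial I_{m+1})$ by means of the action formula, the naturality rule, the complement rule and the triviality rule of the $n$-track category $\NlnC$ (Definition~\ref{dntc}).

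First I would record the suspension comparison underlying the endofunctor $\Sigma\colon\NlnC\to\NlnC$ of~\eqref{eqief}; this is the only place the hypothesis ``complete'' enters. By Definition~\ref{dsmac} the functor $\Sigma$ on $\ACat$ is induced by an endofunctor $\Sigma\colon\CCat\to\CCat$ that preserves zero morphisms and coproducts and comes with a binatural map $\tau_{\Sigma}\colon\Mor_{\CCat}(\Sigma A,Y)\to\Omega\,\Mor_{\CCat}(A,Y)$, compatible with composition and with the induced $\Sigma$ on morphism spaces by~\eqref{eqiefp}, and inducing an isomorphism on homotopy groups when $A$ lies in $\aCat$. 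Passing to singular cubical sets via $\SB$, $\tau_{\Sigma}$ converts a $k$-cube in $\Mor_{\CCat}(\Sigma A,Y)$ into a $(k+1)$-cube in $\Mor_{\CCat}(A,Y)$ whose last face is the zero cube and whose other faces are the $\tau_{\Sigma}$-images of the faces of the original; on homotopy it realizes the identification $D_{m-1}(\Sigma A,Y)=\Hom_{\ACat}(\Sigma^{m}A,Y)=D_{m}(A,Y)$ which, by the last sentence of Definition~\ref{dsmac}, is the one occurring in formula~(3).

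Next I would build $L''$. Since $L'$ restricted to $\bZ(m+1,1)$ equals $\Sigma L$ restricted to $\bZ(m+1,1)$, the only data of $L'$ not already carried by $L$ are $\beta=L'(1,\emptyset)_{*}$ and the tracks $L'(k,I_{k-1})\colon\Sigma A_{k}\to Y$ for $2\le k\le m$. I would let $L''$ agree with $L$ on all of $\bZ(m+1,0)$ and, on the generators $(k,I_{k})\colon A_{k}\to Y$ for $1\le k\le m$ --- precisely those generators (Lemma~\ref{lgcc}) left unconstrained by the requirement $L''|_{\bZ(m+1,0)}=L|_{\bZ(m+1,0)}$ --- modify $L(k,I_{k})$ by the $\tau_{\Sigma}$-transport of $L'(k,I_{k-1})$ (the case $k=1$ meaning that $L(1,I_{1})$ is changed by the action of the class $\tau_{\Sigma}(\beta)\in D_{1}(A_{1},Y)$). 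Because $\tau_{\Sigma}$ commutes with composition and with the face operators and $\Sigma$ preserves coproducts in the topologically enriched $\CCat$, these modified tracks fit together coherently, so $L''$ satisfies the inclusion property~\eqref{epcc} and is an $m$-th order pre-chain complex based on~(1) with $L''|_{\bZ(m+1,0)}=L$.

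It then remains to compare $\OO L''(m+1,\partial I_{m+1})=\OO_{T_{0}^{m}}(b_{1},\dots,b_{m+1})$ with $\OO L(m+1,\partial I_{m+1})$ and $\OO L'(m+1,\partial I_{m})$; here $b_{1}=\delta_{0}L(m+1,I_{m})$ is unchanged, while $b_{r+1}=L''(r,I_{r})L(m+1,I_{m-r})$ for $1\le r\le m-1$ and $b_{m+1}=L''(m,I_{m})\delta_{m+1}$ involve the modified lower tracks. For $m=1$ only $L''(1,I_{1})$ is modified and its boundary is unchanged, so the action and naturality rules give~(3) at once, reproducing the computation of Lemma~\ref{lem:d2WellDefined}. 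For $m\ge 2$ the modified faces are linked --- the boundary of $L''(r,I_{r})$ involves the modified tracks below it --- so, rather than applying the action formula face by face, I would argue with the complement rule: the passage from $L$'s obstruction tuple over $T_{0}^{m}$ to that of $L''$ is effected by gluing, along a left cubical sub-ball (shaped like $T_{0}^{m-1}$ crossed with the loop coordinate adjoined by $\tau_{\Sigma}$), the $\tau_{\Sigma}$-transport of the coherence data of $L'$ lying over $Y$, and the complement rule identifies the resulting change in obstruction with the obstruction of that glued-in piece. By compatibility of $\tau_{\Sigma}$ with composition and faces the latter is the $\tau_{\Sigma}$-transport of the tuple $(b_{1}',\dots,b_{m}')$ over $T_{0}^{m-1}$ defining $\OO L'(m+1,\partial I_{m})$, so under the identification $D_{m-1}(\Sigma A_{m+1},Y)=D_{m}(A_{m+1},Y)$ of the second paragraph the change equals $\OO L'(m+1,\partial I_{m})$, the triviality rule killing the degenerate contributions. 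The hardest part will be exactly this last identification, together with the coherence in the construction of $L''$: one must choose the homotopies realizing the $\tau_{\Sigma}$-transport so that $L''$ is a genuine functor satisfying the inclusion property and so that the adjoined interval is oriented consistently with the $n$-cells of $T_{0}^{m}$, since it is precisely this that forces the correction to be $+\OO L'(m+1,\partial I_{m})$ rather than its negative; the remaining verifications are formal consequences of the rules recorded in Section~\ref{cob} and Definition~\ref{dntc}.
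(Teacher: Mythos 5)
Your overall strategy matches the paper's: use the comparison $\tau_{\Sigma}$ coming from completeness to transport the data of $L'$ into the part of $L$ landing on $Y$, then verify the sum formula using the axioms of an $n$-track category. However, there is a gap in the construction of $L''$. You propose to ``modify $L(k,I_{k})$ by the $\tau_{\Sigma}$-transport of $L'(k,I_{k-1})$.'' For $k=1$ you interpret this as the action of $\tau_{\Sigma}(\beta)\in D_{1}(A_{1},Y)$ on the $1$-track $L(1,I_{1})$, which is fine: the action (Definition~\ref{dntc}(3)) changes a track without changing its $\partial^{1}$, so the pre-chain complex condition is preserved. But for $k\geq 2$ ``modify by the $\tau_{\Sigma}$-transport'' is not a well-defined operation. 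The transport of $L'(k,I_{k-1})$ is a $k$-cube in $\Mor_{\CCat}(A_{k},Y)$ with \emph{nontrivial} boundary; it is not an element of $D_{k}(A_{k},Y)=\pi_{k}\Mor_{\CCat}(A_{k},Y)$, so the action of $D$ on tracks with fixed boundary does not apply. Worse, whatever you do to $L(1,I_{1})$ forces a change in what the faces $\partial^{i}L''(2,I_{2})$ \emph{must} be, by functoriality, so the modification in dimension $2$ cannot be independent of the one in dimension $1$. You acknowledge this (``the modified faces are linked'') in the third paragraph, but the construction in the second paragraph does not resolve it.

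The paper's proof supplies precisely the missing ingredient: the explicit homeomorphism $I\approx[0,2]=I\cup I$, hence $I^{k+1}\approx I^{k+1}\cup I^{k+1}$ glued along a face. Then $L''(j+1,I_{j})$ is literally defined as the \emph{concatenation} $L(j+1,I_{j})\cup\overline{a}_{j}$ along that face, where $\overline{a}_{j}$ is the cube adjoint to $\tau_{\Sigma}L'(j+1,I_{j-1})$. This concatenation has exactly the boundary required by the inclusion property (each face is either a face of $L$'s cube, a face of $\overline{a}_{j}$, or the glued pair in the next dimension down), so coherence is automatic, not something to be argued separately. The sum formula then becomes a direct bookkeeping over the split ball, rather than a chain of complement-rule manipulations. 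You gesture at such a gluing in the third paragraph (``gluing along a left cubical sub-ball\ldots''), which suggests you have the right picture; but without naming the split $I\approx I\cup I$ and defining $L''$ as the resulting concatenation, the second paragraph does not construct a pre-chain complex, and the proof does not go through as written.
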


\begin{proof}
Let \w{I \approx [0, 2] = I \cup I} be the homeomorphism of intervals carrying
\w{t \in I} to \w[.]{2t} Then we have:
\begin{align*}
\tag{1} I\sp{k + 1} = I \times I\sp{k} \approx (I \cup I) \times I\sp{k} =
I\sp{k + 1} \cup I\sp{k + 1}~.
\end{align*}
\noindent for each \w[.]{k \geq 0}

For each \w[,]{j = 1, \dotsc, m + 1} \w{L'} yields the left \wwb{j-1}cube
\w{a\sb{j} = L'(j + 1, I\sb{j - 1})} in \w[,]{\Mor\sb{\cC}(\Sigma A\sb{j + 1}, Y)}
which by \w{\tau\sb{\Sigma}} in Definition \ref{dsmac} yields the $j$-cube
\w{\overline a\sb{j}} in \w{\Mor\sb{\cC}(A\sb{j + 1}, Y)} adjoint to
\w[.]{\tau\sb{\Sigma} a\sb{j}}  Using~(1), we define the $j$ cube
$$
L''(j + 1, I\sb{j}) = L(j + 1, I\sb{j}) \cup \overline a\sb{j}~.
$$
\noindent This defines \w{L''} completely, since \w{L''} restricted to
\w{\bZ(m + 1, 0)} coincides with $L$. One can now check that the sum
formula~(\ref{defi:sumRule})~(3) holds.
\end{proof}

Let $\cT$ be an \wwb{n + k}category enriched in left cubical $(n + k)$-sets
and let \w{\cT\sp{n}} be the $n$-skeleton of $\cT$. Then \w{\cT\sp{n}}
is an $n$-graded category enriched in $n$-cubical sets. We consider a
pre-chain complex
$$
\xymatrix@1{R:\bZ(\infty, 0)_\otimes\sp{n}\ar[r] & \cT\sp{n}~.}
$$

\begin{defn}
A \emph{chain module with operators in $R$} is a functor $L$ which carries
a morphism \w{V:i \lra -1} \wb{i \geq 0} in \w{\bZ(\infty, -1)_\otimes\sp{n}}
to an element
$$
L(V) \in \Mor_\cT(R, Y)\sb{\dim(V) + k}
$$
\noindent such that the inclusion property
$$
L(V) = (d\sb{V, W} \otimes I\sp{k})\sp{\ast} L(W)
$$
\noindent holds if $V$ is in the boundary of $W$ and such that for a composite
\w{V \otimes V'} of morphisms in \w{\bZ(\infty, -1)_\otimes\sp{n}} the equation
$$
L(V \otimes V') = L(V) R(V')
$$
\noindent holds, where the right hand side denotes the composite in the
\wwb{n+k}category $\cT$.
\end{defn}

\begin{lemma}
A chain module $L$ with operators in $R$ is determined by the elements
$$
L(m, I\sb{m}) \in \Mor_\cT(R\sb{m}, Y)\sb{m + k}
$$
\noindent where \w[.]{I\sb{0} = \emptyset$ and $m = 0, \dotsc, n}
\end{lemma}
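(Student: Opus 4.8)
The plan is to reduce the assertion to the free generation of the chain category $\bZO$ recorded in Lemma~\ref{lgcc}. First I would establish the combinatorial fact that every morphism $V\colon i\to -1$ of $\bZ(\infty,-1)_{\otimes}^{n}$ factors as $V=(m,I_{m})\otimes V'$, where $0\le m\le n$ (with the convention $I_{0}=\emptyset$ of the statement) and $V'\colon i\to m$ is a morphism of $\bZ(\infty,0)_{\otimes}^{n}$, i.e.\ a morphism of $R$. Indeed, by Lemma~\ref{lgcc} we may write $V$ as an iterated $\otimes$-composite of the generators $(j,\emptyset)$ and $(j,I_{k})$; since each such generator has positive degree, the target strictly decreases along the composite, so exactly one factor --- the leftmost --- has target $-1$, while all the others have source and target $\ge 0$ and hence compose to a morphism $V'$ of $R$. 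A generator with target $-1$ is either $(0,\emptyset)=(0,I_{0})$ or $(k,I_{k})$, and since $\dim$ is additive under $\otimes$ and $\dim V\le n$, the leftmost factor is $(m,I_{m})$ for some $0\le m\le n$. (Equivalently, under $M\cong N$ the word representing $V$ is $J^{m}$, or begins with $J^{m}\stt$, or begins with $\stt$ --- with $m=0$ in the last case --- and $V'$ corresponds to the tail following the separating $\stt$.)

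Next I would invoke the composition axiom $L(V\otimes V')=L(V)\,R(V')$ of a chain module, applied with $(m,I_{m})$ in the role of the first factor: this gives $L(V)=L(m,I_{m})\cdot R(V')$, the composite formed in the $(n+k)$-category $\TCat$. The degrees are compatible, since $\dim L(m,I_{m})=m+k$, $\dim R(V')=\dim V'$, and $m+\dim V'=\dim V\le n$, so the composite lies in $\Mor_{\TCat}(R_{i},Y)_{\dim V+k}$ and is defined. When $V=(m,I_{m})$ itself we have $V'=\id$ and the formula reads $L(V)=L(m,I_{m})$, consistently, because $R$ preserves identities. Since the operator functor $R$ is given, this formula expresses every value $L(V)$ in terms of the finite list $L(0,I_{0}),\dots,L(n,I_{n})$ alone; in particular any two chain modules with operators in $R$ that agree on $L(m,I_{m})$ for $0\le m\le n$ agree everywhere, which is the claim.

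I expect the only delicate point to be the bookkeeping of the first step --- correctly isolating the terminal generator in a composite morphism into the object $-1$, and verifying that the complementary factor $V'$ is genuinely a morphism of $R$ (automatic, as its source and target are $\ge 0$) --- and this is immediate from Lemma~\ref{lgcc}. The inclusion property of a chain module plays no role in this uniqueness statement; it would enter only if one wished to describe which tuples $\bigl(L(m,I_{m})\bigr)_{0\le m\le n}$ actually occur, their faces being pinned down by the inclusion property together with $R$, but that is not asserted here.
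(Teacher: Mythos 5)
Your proof is correct, and since the paper states this lemma without proof you have in effect supplied the argument the authors are implicitly relying on. The key observations are exactly the right ones: by Lemma~\ref{lgcc} the chain category \w{\bZO} is freely generated, so every \w{V\colon i\to -1} in \w{\bZ(\infty,-1)_{\otimes}^{n}} admits a unique leading factor \w{(m,I_{m})} (peeling off the initial block of $J$'s, with \w{m=0} and \w{I_{0}=\emptyset} when the word begins with $\stt$, and \w{V'=\id} when $V$ is itself a pure power of $J$), the complementary factor \w{V'\colon i\to m} lies in \w{\bZ(\infty,0)_{\otimes}^{n}} because \w{m\geq 0} and \w{\dim V'=\dim V-m\leq n}, and the multiplicativity axiom then forces \w{L(V)=L(m,I_{m})\,R(V')}, a composite that exists in the \wwb{n+k}graded category $\TCat$ since \w{(m+k)+\dim V'=\dim V+k\leq n+k}. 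Your closing observation that the inclusion property plays no role in the uniqueness statement (only in characterising which tuples \w{(L(m,I_{m}))_{m}} arise from an actual chain module) is also accurate.
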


Now let \w{B = B\sb{1} \cup \dotsc \cup B\sb{s}} be a left cubical ball of 
dimension $k$ with cells \w{B\sb{i}} and gluing maps \w{d\sb{e, i}} as in 
Definition \ref{dlcb}. An $s$-tuple \w{(L\sb{1}, \dotsc, L\sb{s})} of 
chain modules \w{L\sb{i}} with operators in $R$ satisfies the 
\emph{gluing condition} in $B$ if for \w{V:m \to -1} we have
\begin{myeq}\label{eqcmgc}
(I\sp{\dim(V)} \times d\sb{e, i})\sp{\ast} L\sb{i}(V) = (I\sp{\dim(V)} \times
d\sb{e, j})\sp{\ast} L\sb{j}(V)~.
\end{myeq}

The left cubical ball \w{C = T\sb{0}\sp{n}} has cells
\w[.]{C\sb{1},\dotsc, C\sb{n + 1}} The product \w{B \times C} is a left cubical 
ball with cells \w[.]{B\sb{i} \times C\sb{j}} Let
\w[,]{(c\sb{1}, \dotsc, c\sb{n + 2}) = \partial I\sb{n + 1}} see \wref[.]{eqtom}

\begin{lemma}
Given $(L\sb{1}, \dotsc L\sb{s})$ as in \wref{eqcmgc} we obtain the tuple of
\wwb{m + k}cubes \wb{r = n + 1} \w{L\sb{i}(r, c\sb{j})} satisfying the gluing
condition in \w[,]{B \times C} so that the obstruction
$$
\OO\sb{B \times C}(L\sb{i}(r, c\sb{j}),\hsm i = 1, \dotsc, s\hsm \text{and}\hsm
j = 1, \dotsc, n + 1)\hsm \text{in}\hsm \Hom\sb{\cA}(\Sigma\sp{n + k} R\sb{r}, Y)
$$
\noindent is defined in the $\Sigma$-track algebra \w[.]{\cT = \cT(n + k)}
Also the tuple \w{L\sb{i}(0, \emptyset)} satisfies the gluing condition, so that
the obstruction
$$
\OO\sb{B}(L\sb{1}(0, \emptyset), \dotsc, L\sb{s}(0, \emptyset)) \in
\Hom\sb{\cA}(\Sigma\sp{k} R\sb{0}, Y)
$$
\noindent is defined in \w[.]{\cT(k)}
\end{lemma}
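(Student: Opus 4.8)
The plan is to reduce both gluing assertions to the two defining properties of a chain module — the inclusion property $L(V)=(d_{V,W}\otimes I^{k})^{\ast}L(W)$ for $V$ in the boundary of $W$, and the gluing hypothesis \eqref{eqcmgc} — after sorting out the codimension-one cells of the product ball. First I would record the bookkeeping: each entry $c_j$ of $\partial I_{n+1}$ has $\deg c_j=n+1$ and $\dim c_j=n$ (see \eqref{eqtom}), so $(r,c_j)=(n+1,c_j)$ is a morphism $n+1\to -1$ of $\bZ(\infty,-1)_\otimes^{n}$ on which $L_i$ is defined, and $L_i(r,c_j)\in\Mor_{\TCat}(R_r,Y)_{n+k}$ is an $(n+k)$-cube, matching $\dim(B\times C)=n+k$; likewise $(0,\emptyset)$ has dimension $0$, so each $L_i(0,\emptyset)$ is a $k$-cube. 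Once the relevant tuples are seen to satisfy the gluing condition, the obstruction operator of the ambient track category produces the stated elements, and the $\Sigma$-track algebra identifications $D_{n+k}(R_r,Y)=\Hom_{\ACat}(\Sigma^{n+k}R_r,Y)$ and $D_k(R_0,Y)=\Hom_{\ACat}(\Sigma^{k}R_0,Y)$ finish each assertion; for the second, the $k$-cubes $L_i(0,\emptyset)$ must first be pushed down to $\TCat(k)$ along the quotient functors of the total track algebra, which preserve gluing relations.

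For the tuple $(L_1(0,\emptyset),\dots,L_s(0,\emptyset))$ over the cells of $B$, the gluing condition in $B$ reads $d_{e,i}^{\ast}L_i(0,\emptyset)=d_{e,i'}^{\ast}L_{i'}(0,\emptyset)$ for each interior codimension-one cell $\overline e\subset B_i\cap B_{i'}$, and this is exactly \eqref{eqcmgc} specialized to $V=(0,\emptyset)$, for which $\dim V=0$ and $I^{\dim V}\times d_{e,i}=d_{e,i}$. So this half is immediate.

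For the tuple $(L_i(r,c_j))$ over the cells $B_i\times C_j$ of $B\times C$, I would use that, $B\times C$ being a product of regular CW-complexes, each interior codimension-one cell shared by two top cells is either of the form $\overline e\times C_j$ with $\overline e\subset B_i\cap B_{i'}$ interior of codimension one in $B$ and the index $j$ fixed, or of the form $B_i\times\overline g$ with $\overline g\subset C_j\cap C_{j'}$ interior of codimension one in $C=T_0^n$ and the index $i$ fixed. Over cells of the first type the gluing equation $(I^{n}\times d_{e,i})^{\ast}L_i(r,c_j)=(I^{n}\times d_{e,i'})^{\ast}L_{i'}(r,c_j)$ is, once more, \eqref{eqcmgc} applied to $V=c_j$ (of dimension $n$). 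Over cells of the second type I would invoke that $C=T_0^n$ is the left cubical ball which realizes the tuple $\partial I_{n+1}=(c_1,\dots,c_{n+1})$ of morphisms of $\bZO^{n}$: its cell $C_j$ is $\overline{c_j}\cong I^{n}$, the common face $\overline g=C_j\cap C_{j'}$ is $\overline V$ for the unique morphism $V$ of $\bZO^{n}$ lying in the boundary of both $c_j$ and $c_{j'}$, and the gluing maps of Definition \ref{dlcb} are the canonical inclusions $d_{g,j}=d_{V,c_j}$ and $d_{g,j'}=d_{V,c_{j'}}$. The inclusion property of $L_i$ then reduces both $(d_{g,j}\otimes I^{k})^{\ast}L_i(r,c_j)$ and $(d_{g,j'}\otimes I^{k})^{\ast}L_i(r,c_{j'})$ to $L_i(r,V)$, so they agree, which is the required gluing equation. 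Combining the two types, $(L_i(r,c_j))$ satisfies the gluing condition in $B\times C$ and $\OO_{B\times C}$ of it is defined.

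The main obstacle is the second-type verification: the combinatorial fact that a face shared by two cells of $T_0^n$ corresponds to a single boundary morphism of $\bZO^{n}$, with matching canonical inclusions into the two cells. Concretely this comes down to $\overline{c_j}\cap\overline{c_{j'}}=\overline V$, which holds because $\overline{(-)}$ sends morphisms of $\bZO^{n}$ to faces of the cube $I^{n+1}$ and the cubical identities make the two face inclusions agree; it is also encoded in the isomorphism $\dL(\hZO)\cong\Ws\Gamma$ of the Proposition following Definition \ref{dgamma}, whose face maps are given by rules (a) and (b) of Definition \ref{dwconst}. Everything else is a direct unwinding of the chain-module axioms and the $\Sigma$-track algebra structure.
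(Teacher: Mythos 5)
Your proof is correct, and it is worth noting that the paper states this Lemma without giving a proof, so your verification fills an actual gap rather than paralleling an existing argument. The key observations are all in place: (i) the dimension bookkeeping $\dim L_i(r,c_j)=n+k$ matches $\dim(B\times C)$; (ii) the codimension-one interior cells of the product ball split into exactly two types, $\overline e\times C_j$ (face in the $B$-direction, same $c_j$) and $B_i\times\overline g$ (face in the $C$-direction, same $i$), since a codimension-one cell of a product of regular CW-complexes is a product of a top cell with a codimension-one cell; (iii) Type~1 gluing is literally~\eqref{eqcmgc} applied to $V=c_j$, and Type~2 gluing follows from the inclusion property of each $L_i$ once you identify $C_j\cap C_{j'}=\overline V$ for the unique $V\in\bZO^n$ lying in the boundary of both $c_j$ and $c_{j'}$; and (iv) the $B$-only gluing for $(L_i(0,\emptyset))_i$ is~\eqref{eqcmgc} at $V=\emptyset$.

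Two small points you might sharpen. First, you should be explicit that the product $B\times C$ (which the paper asserts without proof to be a left cubical ball) has its gluing maps of the form $d_{e^B,i}\times\id_{C_j}$ and $\id_{B_i}\times d_{g,j}$, and that these are composites of $d_0$-maps as required by Definition~\ref{dlcb}; this is what justifies writing $I^{\dim V}\times d_{e,i}$ and $d_{V,c_j}\otimes I^k$ for them. Second, there is a factor-ordering ambiguity (is it $B\times C$ or $C\times B$?) that the paper itself is cavalier about: the formulas~\eqref{eqcmgc} and the inclusion property both put the $\bZO$-direction first and the $B$-direction second, so strictly the ball should be labelled $C\times B$ for the formulas to pull back along the right coordinates. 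This is notational, not substantive, but making the convention explicit would tighten the write-up. Finally, for the second obstruction one does need, as you say, to apply the quotient functors $\TCat(n+k)^k\to\dots\to\TCat(k)$ before $\OO_B$ is applicable; and the identifications $D_{n+k}(R_r,Y)=\Hom_{\ACat}(\Sigma^{n+k}R_r,Y)$, $D_k(R_0,Y)=\Hom_{\ACat}(\Sigma^{k}R_0,Y)$ presuppose $R_r,R_0\in\aCat$, which is the standing hypothesis on $R$ in this appendix.
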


\begin{defn}[product rule]
Let $R$ and \w{L\sb{1}, \dotsc, L\sb{s}} be given as above where $R$ is based on
$$
\xymatrix@1{
\dotsc\ar[r] & R\sb{n + 1}\ar[r] & R\sb{n}\ar[r] & 
\dotsc\ar[r]\sp{\delta\sb{1}} & R\sb{0}~.
}
$$
\noindent Let \w{\alpha \in \Hom\sb{\cA}(\Sigma\sp{k}R\sb{0}, Y)} be given by
\w[.]{\OO\sb{B}(L\sb{i}(0, \emptyset))} Then there exists a pre-chain complex 
\w{L'} based on
$$
\xymatrix@1{\Sigma\sp{k} R\sb{n + 1}\ar[r] & \Sigma\sp{k} R\sb{n}\ar[r] &
\dotsc\ar[r]^(.4){\Sigma\sp{k} \delta\sb{1}} &
\Sigma\sp{k} R\sb{0}\ar[r]^(.56)\alpha & Y~,}
$$
\noindent such that the equation
$$
\OO\sb{C}(L'(n + 1, \partial I\sb{n + 1})) = 
\OO\sb{B \times C}(L\sb{i}(0, c\sb{j}))
$$
\noindent holds in \w[,]{\Hom\sb{\cA}(\Sigma\sp{k + n} R\sb{n + 1}, Y)} and \w{L'}
restricted to \w{\bZ(n + 1, 0)} coincides with \w[.]{\Sigma\sp{k} R} This is
the \emph{product rule} in \w[.]{\cT(n + k)}
\end{defn}

\begin{prop}\label{ppr}
Let $\cC$ be a complete $\Sigma$-mapping algebra. Then
\w{\Nul{\leq (n + k)}(\cC)} satisfies the product rule.
\end{prop}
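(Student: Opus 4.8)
The plan is to mimic the proof of the sum rule (Proposition~\ref{psrin}): the completeness structure on $\CCat$ provides a transfer between cubes in the $\Sigma^{k}$-suspended mapping spaces and cubes of $k$ higher dimension in the original mapping spaces, and the left cubical ball $B$ serves as the bookkeeping device governing this transfer, in exactly the way that $B/\partial B\approx S^{k}$ does after delooping $k$ times. The $\Sigma^{k}$-shifted pre-chain complex $L'$ demanded by the product rule is then obtained by ``delooping'' the family $L_{1},\dots,L_{s}$ along $B$, and the required obstruction identity reduces to a Fubini-type statement for the product ball $B\times C$.

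First I would make the transfer precise. Iterating the binatural transformation $\tau_{\Sigma}$ of Definition~\ref{dsmac} (equivalently, iterating the adjoint cylinder maps of~\eqref{eqhs}) gives, for each $A$ in $\aCat$ and each object $Z$ of $\CCat$, a natural weak equivalence $\Mor_{\CCat}(\Sigma^{k}A,Z)\simeq\Omega^{k}\Mor_{\CCat}(A,Z)$. By Definition~\ref{dlcb}, $\partial B$ is the union of the $h_{i}$-images of the $d_{1}$-faces of the cells $B_{i}\approx I^{k}$, and every cube occurring in $\Nul{\leq (n + k)}\CCat$ is trivial on faces of this kind; hence an $s$-tuple of $(m+k)$-cubes $(c_{1},\dots,c_{s})$, with $c_{i}$ sitting on $I^{m}\times B_{i}$ and satisfying the gluing condition along $B$, assembles to a single map $I^{m}\times B\to\Mor_{\CCat}(A,Z)$ carrying $I^{m}\times\partial B$ to $o$, and therefore---after collapsing $\partial B$ and applying the transfer---to a well-defined $m$-track in $\Mor_{\CCat}(\Sigma^{k}A,Z)$, determined up to homotopy relative to the boundary. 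I would record that this assignment is natural in $A$ and $Z$, is compatible with composition in $\CCat$, and carries a tuple of zero morphisms to $o$.

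Next I would define $L'$. On morphisms within $\bZ(n+1,0)$ I set $L'=\Sigma^{k}R$. For a morphism $V\colon i\to-1$ in $\bZ(n+1,-1)_{\otimes}^{n}$ the chain modules $L_{1},\dots,L_{s}$ with operators in $R$ produce the $s$-tuple of $(\dim V+k)$-cubes $L_{1}(V),\dots,L_{s}(V)$ in $\Mor_{\CCat}(R_{i},Y)$, which by hypothesis satisfies the gluing condition in $B$ (see~\eqref{eqcmgc}); I set $L'(V)$ to be the $\dim V$-track in $\Mor_{\CCat}(\Sigma^{k}R_{i},Y)$ obtained from this tuple by the transfer just described. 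The chain-module axioms $L_{i}(V\otimes V')=L_{i}(V)R(V')$ together with the inclusion property $L_{i}(V)=(d_{V,W}\times I^{k})^{\ast}L_{i}(W)$, and the fact that $R$ is a pre-chain complex, guarantee that $L'$ is a well-defined functor $\bZ(n+1,-1)_{\otimes}^{n}\to\TCat(n+k)$ with the inclusion property. Its restriction to $\bZ(n+1,0)$ is $\Sigma^{k}R$ by construction, and $L'(0,\emptyset)$ is the $0$-track obtained by collapsing $L_{1}(0,\emptyset),\dots,L_{s}(0,\emptyset)$ over $B$, which by the definition of the obstruction operator (Section~\ref{cob}) represents $\OO_{B}(L_{i}(0,\emptyset))=\alpha$; hence $L'$ is based on $\Sigma^{k}R_{n+1}\to\dots\to\Sigma^{k}R_{0}\xra{\alpha}Y$.

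Finally I would verify the identity $\OO_{C}(L'(n+1,\partial I_{n+1}))=\OO_{B\times C}(L_{i}(n+1,c_{j}))$ in $\Hom_{\ACat}(\Sigma^{n+k}R_{n+1},Y)$. Here I would use that $B\times C$ is a left cubical ball with cells $B_{i}\times C_{j}$ (so the right-hand side is defined in $\TCat(n+k)=\Nul{\leq (n + k)}(\CCat)$, which is precisely why the bound $n+k$ enters), together with a homeomorphism of pairs $(B\times C,\partial(B\times C))\approx(B/\partial B)\wedge(C/\partial C)\approx S^{k}\wedge S^{n}=S^{n+k}$ chosen compatibly with the cell structures and orientations. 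The $(n+k)$-track $\OO_{B\times C}(L_{i}(n+1,c_{j}))$ is represented by a map $B\times C\to\Mor_{\CCat}(R_{n+1},Y)$ trivial on $\partial(B\times C)$; taking adjoints in the $B$-direction converts it into a based map $C\to\Omega^{k}\Mor_{\CCat}(R_{n+1},Y)\simeq\Mor_{\CCat}(\Sigma^{k}R_{n+1},Y)$ trivial on $\partial C$, and this is exactly the map representing $\OO_{C}(L'(n+1,\partial I_{n+1}))$, because transfer in the $B$-direction and assembly in the $C$-direction are operations on disjoint blocks of product coordinates and hence commute. I expect this last step to be the main obstacle: it requires an orientation-sensitive check that the transfer commutes with the obstruction operator for the product ball, reconciling the left cubical bookkeeping---only $d_{0}$-faces, the $d_{1}$-faces collapsed---on each factor with that on $B\times C$, and matching the orientation signs $\epsilon_{i}$ of $B$, $\epsilon_{j}$ of $C$, and $\epsilon_{i}\epsilon_{j}$ of $B\times C$ against the chosen identification $S^{k}\wedge S^{n}\approx S^{n+k}$. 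Everything else is a routine, if lengthy, unwinding of definitions, running in parallel with the proof of Proposition~\ref{psrin}.
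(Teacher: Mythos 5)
Your proposal takes essentially the same approach as the paper's (very terse) proof: the paper simply notes that since $S^{k}=B/\partial B$, one may replace $\Sigma^{k}R_{i}$ by $(B/\partial B)\wedge R_{i}$ and then glue the various $L_{i}$ to obtain $L'$, which is precisely the transfer-along-$B$ and assembly you spell out via the iterated adjunction $\tau_{\Sigma}$. You supply the bookkeeping (gluing on $I^{m}\times B$, the Fubini argument on $B\times C$, the orientation check) that the paper leaves implicit, but the underlying idea is identical.
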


\begin{proof}
We have the homeomorphism \w[,]{S\sp{k} = B/\partial B} so we can
replace \w{\Sigma\sp{k} R\sb{i}} by \w[.]{(B/\partial B)\wedge R\sb{i}} 
Gluing the various \w{L\sb{i}} yields \w[.]{L'}
\end{proof}

\begin{remark}
\label{rsigmaTrackAlgebraHigherExtGroups}
If  \w{\cT(\leq 2n)} is a complete $\Sigma$-algebra of left cubical balls
(Definition \ref{dcsa}) the higher \ww{\Ext}-groups \w{E\sb{m}\sp{r, s}} are
well-defined by Definition\ref{dobstr} for \w[.]{m = 2, \dotsc, n + 2}
A proof can be given along the lines of the argument given below to show
that \w[.]{d\sb{2}d\sb{2}=0}

Since a complete $\Sigma$-mapping algebra $\cC$ yields a complete
$\Sigma$-track algebra \w[,]{\Nul{\leq 2n}(\cC)} the higher \ww{\Ext}-groups
are well defined in this case.
\end{remark}

\begin{example}
As an application of the product rule, we prove that \w[:]{d\sb{2} d\sb{2} = 0}
by \wref{eqdaeg} we have a diagram
$$
\xymatrix{
A\sb{r + 4}\ar[r]\sp{\delta_4}\ar@{=}[d] &
A\sb{r + 3}\ar[r]\sp{\delta_3}\ar@{=}[d] &
A\sb{r + 2}
\ar[r]\sp{\delta\sb{2}}\ar@{=}[d]\ar@/^{1.9em}/[rr]\sb{\Uparrow G}\sp{o} &
A\sb{r + 1}\ar[r]\sp{\delta\sb{1}}\ar@/_{1.9em}/[rr]\sp{\Downarrow F}\sb{o} &
A\sb{r}\ar[r]\sp{\beta} & Y\\
R\sb{2} & R\sb{1} & R\sb{0}
}
$$
\noindent For \w[,]{B = T\sb{0}\sp{1} = B\sb{1} \cup B\sb{2}} we choose
\w{L\sb{1}(0, \emptyset) = F \delta\sb{2}} and
\w[,]{L\sb{2}(0, \emptyset) = \beta G} where $F$ and $G$ are $1$-cubes with
\w{G = K(r + 2, I\sb{1})} given by the resolution $K$. Then
\begin{align*}
\tag{1} \alpha = d\sb{2} \beta = \OO\sb{B}(L\sb{1}(0, \emptyset),
L\sb{2}(0, \emptyset)).
\end{align*}
\noindent Now let \w{C= T\sb{0}\sp{1} = C\sb{1} \cup C\sb{2}} and
\w[.]{(c\sb{1}, c\sb{2}) = (I\sb{1} \otimes \emptyset, \emptyset \otimes I\sb{1})}
Then \w{L\sb{i}(2,  c\sb{j})} is defined as follows:
$$
\begin{array}{ccccc}
L\sb{1}(2, c\sb{1}) & = & L\sb{1}(2, I\sb{1} \otimes \emptyset) & = &
F K(r + 3, I\sb{1}) \delta_4\\
L\sb{1}(2, c\sb{2}) & = & L\sb{1}(2, \emptyset \otimes I\sb{1}) & = &
F \delta\sb{2} K(r + 4, I\sb{1})\\
L\sb{2}(2, c\sb{1}) & = & L\sb{2}(2, I\sb{1} \otimes \emptyset) & = &
\beta K(r + 3, I\sb{2}) \delta_4\\
L\sb{2}(2, c\sb{2}) & = & L\sb{2}(2, \emptyset \otimes I\sb{1}) & = &
\beta K(r + 2, I\sb{1}) K(r + 4, I\sb{1})
\end{array}
$$

Now the product rule shows that
\begin{align*}
\tag{2} d\sb{2} d\sb{2} \beta = d\sb{2} \alpha = 
\OO\sb{B\times C}(L\sb{i}(0, c\sb{j})) = 0
\end{align*}
and the rules in an algebra of left $2$-cubical balls show that this obstruction
is trivial. In fact, we have
\begin{align*}
\tag{3} &~\OO(I\sb{1} I\sb{1} \emptyset, \emptyset I\sb{2} \emptyset,
I\sb{1} \emptyset I\sb{1}, \emptyset I\sb{1} I\sb{1})\\
\tag{4} =&~ \OO(I\sb{1} I\sb{1} \emptyset, \emptyset \emptyset I\sb{2}, I\sb{1}
\emptyset I\sb{1})\\
\tag{5} =&~0
\end{align*}
Here~(3) is the obstruction~(2), (4) is a consequence of the complement
rule, and
$$
\OO(\emptyset I\sb{2}, I\sb{1} I\sb{1}, I\sb{2} \emptyset)~=~0~,
$$
\noindent which follows from the fact that $K$ is a resolution. Naturality
yields
$$
\OO(\emptyset\emptyset I\sb{2},\emptyset I\sb{1}I\sb{1},\emptyset
I\sb{2}\emptyset) = 0~.
$$
\noindent Finally, (5) follows from the triviality rule.
\end{example}

\end{document}